\documentclass[a4paper,10pt]{amsart}

\usepackage{csquotes}
\usepackage[english]{babel}
\usepackage{algpseudocode}
\usepackage{algorithm}
\usepackage{enumerate}
\usepackage{makecell}
\usepackage{color}
\usepackage{subcaption}
\usepackage{comment}

\usepackage{fullpage}
\usepackage{xspace}             
\usepackage{graphicx}
\usepackage{amsmath,bm}
\usepackage{verbatim}

\usepackage{mathrsfs}
\usepackage{amsmath,amssymb,amsthm}

\usepackage{hyperref}

\usepackage{enumitem}  

\usepackage[backend=biber,
    style=numeric,
    sorting=ydnt,
    natbib=true,
    url=false, 
    doi=true,
    eprint=false,
    giveninits=true,
    maxbibnames=5]{biblatex} 
\addbibresource{Citations.bib} 


\numberwithin{equation}{section} 
\newtheorem{theorem}{Theorem}[section]
\newtheorem{corollary}[subsection]{Corollary}
\newtheorem{lemma}{Lemma}[section]

\theoremstyle{definition}
\newtheorem{definition}{Definition}[section]
\newtheorem{remark}[subsection]{Remark}

\newcommand{\1}{{\mathbf{1}}}
\newcommand{\N}{{\mathbb N}}
\newcommand{\T}{{\mathbb T}}
\newcommand{\R}{{\mathbb R}} 
\newcommand{\C}{{\mathbb C}} 
\newcommand{\Z}{{\mathbb Z}}

\newcommand{\E}{{\mathbb E}}
\newcommand{\Prob}{\mathbb{P}}

\newcommand{\dt}{{\Delta t}}

\setcounter{secnumdepth}{4}
\setlength\parindent{0pt}

\newcommand{\ESM}{\texttt{ESM}\xspace}
\newcommand{\ExpSM}{\texttt{ExpSM}\xspace}

\newcommand{\TAM}{\texttt{Tam}\xspace}

\renewcommand{\Re}{{\rm Re}}
\renewcommand{\Im}{{\rm Im}}
\newcommand{\ip}[1]{\langle #1 \rangle}
\newcommand{\ipre}[1]{\ip{#1}_{\mathbb{R}}}

\date{\today}

\title{Strong Convergence of a Splitting Method for the Stochastic complex Ginzburg--Landau equation}

\author{Marvin Jans$^*$}

\address{Centre for Mathematical Sciences, Lund University, Lund, Sweden}   
    \email{marvin.jans@math.lth.se, ($^*$Corresponding author)}    
	\author{ Gabriel J. Lord}\address{Mathematics, IMAPP, Radboud University, Nijmegen, The Netherlands
	}
    \email{gabriel.lord@ru.nl}
	\author{ Mariya Ptashnyk}
    \address{Department of Mathematics, Maxwell Institute for Mathematical Sciences, Heriot-Watt University, Edinburgh, Scotland, UK
	}
    \email{m.ptashnyk@hw.ac.uk}

\keywords{
stochastic partial differential equations, complex Ginzburg-Landau equation, splitting method, strong convergence}

\subjclass[2020]{35R60, 60H15, 35Q56, 65C30, 60H35}

\usepackage{amsopn}

\ifpdf
\hypersetup{
  pdftitle={Strong Convergence of a Splitting Method for the Stochastic complex Ginzburg--Landau equation},
  pdfauthor={M. Jans, G. J. Lord, and M. Ptashnyk}
}
\fi

\begin{document}

\begin{abstract}
We consider the numerical approximation of the stochastic complex Ginzburg-Landau equation with additive noise on the one dimensional torus. The complex nature of the equation means that many of the standard approaches developed for stochastic partial differential equations can not be directly applied.
We use an energy approach to prove an existence and uniqueness result as well to obtain moment bounds on the stochastic PDE before introducing our numerical discretization.
For such a well studied deterministic equation it is perhaps surprising that its numerical approximation in the stochastic setting has not been considered before.
Our method is based on a spectral discretization in space and a Lie-Trotter splitting method in time.
We obtain moment bounds for the numerical method before proving our main result: strong convergence on a set of arbitrarily large probability. From this we obtain a result on convergence in probability. We conclude with some numerical experiments that illustrate the effectiveness of our method.
\end{abstract}

\maketitle

\section{Introduction}

We consider the numerical approximation of the stochastic complex Ginzburg-Landau equation (SCGLE) with additive space-time noise with periodic boundary conditions, 
\begin{equation}\label{eq:CGLE}
du=\left[(1+i\nu )\Delta u+Ru-(1+i\mu)|u|^2u)\right]dt+\sigma dW.
\end{equation}
 The complex Ginzburg-Landau equation arises in various fields in physics and biology, such as evolutionary game theory \cite{frey_evolutionary_2010}, fluid mechanics \cite{GarcaMorales2012} and condensed matter physics \cite{aranson_world_2002}. Without an application in mind, the deterministic complex Ginzburg-Landau equation is an often studied PDE \cite{doering_low-dimensional_1988, Kwasniok2001LowDimensionalMO} in part as there are a number of explicit solutions, such as periodic travelling waves,
 \cite{GarcaMorales2012,SHRAIMAN1992241,aranson_world_2002,doering_low-dimensional_1988}.
 It is common to take the parameter $R>0$ and we assume this throughout the paper.  

Numerical methods for SPDEs with non-globally Lipschitz non-linearities is an active research field and the archetype example is the stochastic Allen-Cahn equation, see for example \cite{Becker2022,brehier_analysis_2019,wang_efficient_2020,BECKER201928,Djurdjevac}.
The stochastic Ginzburg-Landau equation, although similar to the stochastic Allen-Cahn equation, poses a number of particular technical issues. A notable difference is that the SCGLE \eqref{eq:CGLE} is complex-valued and due to the complex coefficient in front of the Laplacian, the generated semigroup is not a contracting semigroup in the space of continuous functions. As a consequence the standard techniques for non-linear locally Lipschitz SPDEs, as described in  \cite{da_prato_zabczyk_2014,Cerrai2001SecondOP}, do not apply. There are only a limited number of results for \eqref{eq:CGLE}, see for example \cite{Blomker,CHENG202358,guo_attractor_2008,kuksin_randomly_2004,odasso_ergodicity_2006,trenberth_global_2019,weinan_renormalized_2016} and we are not aware of any results on strong convergence of a numerical method. The work of \cite{Blomker} considers attractors for the SCGLE on unbounded domains, \cite{guo_attractor_2008} also examines attractors whereas \cite{CHENG202358} examines the averaging principle using a variational approach. The work of \cite{kuksin_randomly_2004} examines stationary measures and the inviscid limit and \cite{odasso_ergodicity_2006} looks at ergodicity.
The SCGLE on the two dimensional torus with space-time white noise is considered in \cite{trenberth_global_2019} and a global existence result is proved using renormalization, see also \cite{weinan_renormalized_2016}.

We construct our numerical method by using a spectral Galerkin method in space and a Lie-Trotter splitting method in time where the complex nonlinear ODEs are solved exactly and are coupled to the complex stochastic convolution.
This is similar to that proposed in \cite{brehier_analysis_2019} for the Allen-Cahn equation and \cite{Goldman1995ANM} for the deterministic case.
In this paper, we present the proofs for the moment bounds for the periodic boundary condition and relevant norms needed for the analysis of our numerical method. 
These differ from the moment bounds in \cite{Blomker,CHENG202358}, obtained   for different norms and in a different setting. 

The paper is organised as follows, first, we present some standard results for fractional Sobolev spaces, semigroups, and stochastic processes and give some slight variations of these results which are specific to our problem. In Section \ref{sec_SCGLE}, we show that the SCGLE \eqref{eq:CGLE} is well-defined and prove moment bounds for the solution, which are needed to analyse the numerical methods. For this we need to impose constraints on the regularity of the noise
and on the regularity of the initial data.
In Section \ref{sec_Num} the numerical method is introduced.
To obtain moment bounds on the scheme we need to impose a further constraint on the parameter $\nu$ so that $|\nu|\leq \sqrt{3}$. We then prove the strong convergence for the numerical method and as a corollary convergence in probability. In Section \ref{sec_exp}, we discuss the implementation and perform numerical tests to verify our theoretical results and compare with results  when regularity assumptions and conditions on the model parameters are relaxed. 

\section{Mathematical setting and stochastic convolution} \label{Back}

We consider \eqref{eq:CGLE} on the domain $[0,1]$ with periodic boundary conditions, 
i.e.~on an $1$-dimensional torus~$\mathbb{T}$.
For the Hilbert space $L^2(\T;\C)$ we use the notation $L^2$ and denote the norm  by $\|\cdot \|:=\|\cdot \|_{L^2}$, the inner-product by $\langle\cdot,\cdot\rangle$, and   the real part of the inner-product $\Re\{\ip{\cdot,\cdot}\}$ by $\ipre{\cdot,\cdot}$. 
We consider the basis for  $L^2$  given by the eigenfunctions $\{\phi_k\}$ of $-\Delta$, with the corresponding eigenvalues $\lambda_k$, 
 \begin{equation}\label{eq:phi}
    \phi_k(x):=e^{i2\pi kx}\in C^{\infty}(\T;\C), \qquad \lambda_k:=(2\pi k)^2, \qquad \text{ for } \; k\in\mathbb{Z}. 
\end{equation}
We denote the Banach space of complex-valued continuous functions on $\T$ with the usual supremum norm by $V:=C(\T,\C)$.
For separable Hilbert spaces $U$ and $H$ with norms $\|\ \|_U$ and $\|\ \|_H$ and a  linear operator 
$D: U \to H$, the operator norm  and  the Hilbert-Schmidt norm  are defined as
$$\|D\|_{\mathcal{L}(U;H)}=\sup_{u\in U,\|u\|_U=1}\|Du\|_H, \qquad \|D\|_{\mathcal{L}^2(U;H)}=\Big(\sum_{k=1}^{\infty}\|D\chi_k\|_{H}^2\Big)^{\frac{1}{2}},$$
where $\{\chi_k\}_{k\in\N}$ is an orthonormal basis for $U$.  When $U=H$, we write $\|D\|_{\mathcal{L}(U)}$ and $\|D\|_{\mathcal{L}^2(U)}$. 
It is straightforward to show that if $D$ is a Hilbert-Schmidt operator, it is also a bounded linear operator. Recall the following inequality for two linear operators, $D$ and $E$, on the separable Hilbert space $H$,
\begin{equation} \label{eq:Hilbertt_op_ineq}
\|DE\|_{\mathcal{L}^2(H)}^2=\sum_{k=1}^{\infty}\|DE\chi_k\|^2_{H}\leq\sum_{k=1}^{\infty}\|D\|_{\mathcal{L}(H)}^2\|E\chi_k\|_{H}^2=\|D\|^2_{\mathcal{L}(H)}\|E\|_{\mathcal{L}^2(H)}^2.
\end{equation}

The fractional Sobolev spaces are defined for $\alpha\in\R$ by
$$
\dot H^\alpha (\T):= \mathcal D(\tilde\Delta^{\alpha/2}), \quad \text{ with } \; \;  \|u\|^2_{\dot H^\alpha(\T)} := \|u\|^2_{\dot H^\alpha} = \langle \tilde{\Delta}^{\frac{\alpha}{2}}u,\tilde{\Delta}^{\frac{\alpha}{2}}u\rangle, 
$$
where $\tilde \Delta := I-\Delta$ with periodic boundary conditions. 

For the SCGLE we are interested in semigroups $e^{tA}$ that have as a generator 
\begin{equation}
\label{eq:A defn}
    A:=(1+i\nu)\Delta,
\end{equation}
with periodic boundary conditions. 
For the definition and properties of semigroups and fractional Sobolev spaces see, for example,  ~\cite{2000, Henry_1981, Pazy_1983, thomee}.
The operator $A$ is a  linear closed  operator on $\dot{H}^{\alpha}$ with domain of definition  $\dot{H}^{\alpha+2}$ and the spectrum is given by  $\{-(1+i\nu)\lambda_k\}_{k\in\Z}$, where the $\lambda_k$ are as in \eqref{eq:phi},
and for $\lambda>0$ we have
$$\|(\lambda I-A)^{-1}\|_{\mathcal{L}(\dot{H}^{\alpha})}\leq\frac{1}{\lambda}.$$
Hence $-A$ is sectorial and $A$ is a generator of $C_0$ and analytic semigroup of contraction on $\dot H^\alpha$, see e.g.~\cite{da_prato_zabczyk_2014, 2000, Henry_1981, Pazy_1983}. 

\begin{remark}
    In the estimates below we do not keep track of the dependency of constants $C$ on the model parameters $\mu$, $\nu$, $R$ and initial condition $u_0$.
    For other parameters we indicate by subindex if a constant depends on them, e.g. $C_T$ is a constant that depends on the final time $T$.
\end{remark}
The following lemma collects three useful properties of the semigroup $e^{tA}$.
   \begin{lemma}\label{lem:semigroup}
 (i)   For  $u\in \dot{H}^{\beta}$ and $\beta\leq\alpha$ there is a constant $C_{\alpha,\beta,T}>0$ such that
    \begin{equation*}
        \|e^{tA}u\|_{\dot{H}^{\alpha}}\leq C_{\alpha,\beta,T}t^{\frac{\beta-\alpha}{2}}\|u\|_{\dot{H}^{\beta}} \qquad \text{ for }  t \in (0,T].
    \end{equation*}
(ii) For $\gamma\geq 0$, there exists a constant $C_{T,\gamma}>0$ such that for $t \in (0,T]$
$$\|\tilde{\Delta}^{\gamma}e^{tA}\|_{\mathcal{L}(L^2)}\leq C_{T,\gamma}t^{-\gamma}.
$$
(iii)    For $\gamma \in  [0,1]$ and $t\in (0,T]$, there exists a constant $C_{T,\gamma}>0$ such that
$$\|\tilde{\Delta}^{-\gamma}(I-e^{tA})\|_{\mathcal{L}(L^2)}\leq C_{T, \gamma}t^{\gamma}.$$
\end{lemma}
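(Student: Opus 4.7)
The plan is to exploit the spectral representation afforded by the Fourier basis $\{\phi_k\}$. Since $A\phi_k = -(1+i\nu)\lambda_k\phi_k$ and $\tilde\Delta\phi_k=(1+\lambda_k)\phi_k$, both $e^{tA}$ and $\tilde\Delta^{\gamma}$ are Fourier multipliers that are simultaneously diagonalised, and the three statements reduce to pointwise bounds on scalar multipliers, together with the observation $|e^{-(1+i\nu)\lambda_k t}|=e^{-\lambda_k t}$. Throughout I will write $u=\sum_{k\in\Z}\hat u_k\phi_k$ and use $\|u\|_{\dot H^\alpha}^2=\sum_k(1+\lambda_k)^\alpha|\hat u_k|^2$.

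For (i), the desired inequality is equivalent to
\begin{equation*}
\sup_{k\in\Z,\,t\in(0,T]}\,t^{\alpha-\beta}(1+\lambda_k)^{\alpha-\beta}e^{-2\lambda_k t}\leq C_{\alpha,\beta,T}^{2}.
\end{equation*}
I would split the supremum into $k=0$ (where the bound is simply $t^{\alpha-\beta}\leq T^{\alpha-\beta}$, using $\alpha\geq\beta$) and $k\neq 0$ (where $1+\lambda_k\leq 2\lambda_k$, so the quantity is controlled by $2^{\alpha-\beta}\sup_{s\geq 0}s^{\alpha-\beta}e^{-2s}$). Property (ii) follows by the same scheme with the scalar multiplier $(1+\lambda_k)^{2\gamma}e^{-2\lambda_k t}$; the estimate $t^{2\gamma}(1+\lambda_k)^{2\gamma}e^{-2\lambda_k t}\leq C_{T,\gamma}$ is obtained by the same $k=0$/$k\neq 0$ dichotomy, and then one divides by $t^{2\gamma}$.

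For (iii), the multiplier is $(1+\lambda_k)^{-\gamma}(1-e^{-(1+i\nu)\lambda_k t})$. I would use the elementary estimate
\begin{equation*}
|1-e^{-z}|\leq C_\gamma|z|^{\gamma}\quad\text{for }\Re z\geq 0,\ \gamma\in[0,1],
\end{equation*}
which one derives by interpolating between $|1-e^{-z}|\leq|z|$ (from $1-e^{-z}=\int_0^1 ze^{-sz}ds$) and $|1-e^{-z}|\leq 2$. Applied with $z=(1+i\nu)\lambda_k t$ this yields $|1-e^{-(1+i\nu)\lambda_k t}|\leq C_{\gamma,\nu}(\lambda_k t)^{\gamma}$, so the scalar multiplier is bounded by
\begin{equation*}
C_{\gamma,\nu}^{2}\,t^{2\gamma}\Bigl(\tfrac{\lambda_k}{1+\lambda_k}\Bigr)^{2\gamma}\leq C_{\gamma,\nu}^{2}\,t^{2\gamma},
\end{equation*}
uniformly in $k$ (the case $k=0$ is immediate since the numerator vanishes).

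I do not expect a genuine obstacle: once one commits to the Fourier-side calculation, each assertion collapses to a uniform bound on a scalar multiplier. The only point requiring a little care is the treatment of the zero mode $\lambda_0=0$ in (i) and (ii), where the bound $1+\lambda_k\leq 2\lambda_k$ fails; this is handled separately using $t\leq T$, which is the reason the constants are allowed to depend on the final time $T$.
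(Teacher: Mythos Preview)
Your argument is correct. Parts (i) and (ii) follow the same Fourier-multiplier strategy as the paper; the only cosmetic difference is that, rather than splitting off the zero mode, the paper inserts a factor $e^{2t}e^{-2t}$ so as to work with $(1+\lambda_k)^{\alpha-\beta}e^{-2(1+\lambda_k)t}$ and then maximises $x\mapsto x^{\alpha-\beta}e^{-2tx}$ uniformly in $k$, absorbing the resulting $e^{t}$ into the $T$-dependent constant. Your $k=0$ case split achieves the same end.

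Part (iii) is where you genuinely diverge. The paper does not bound the multiplier directly; instead it writes
\[
\tilde\Delta^{-\gamma}(I-e^{tA})u=-\int_0^t \tilde\Delta^{-\gamma}Ae^{sA}u\,ds
\]
and invokes the already-established estimate (ii) to get $\|\tilde\Delta^{-\gamma}Ae^{sA}\|_{\mathcal L(L^2)}\leq C_{T,\gamma}s^{\gamma-1}$, then integrates. Your route via the interpolation bound $|1-e^{-z}|\leq \min(2,|z|)\leq 2^{1-\gamma}|z|^{\gamma}$ for $\Re z\geq 0$ is more self-contained and avoids appealing back to (ii); the paper's integral-representation argument, on the other hand, is the one that generalises verbatim to an abstract analytic semigroup where no explicit spectral calculus is available. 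Both are standard and yield the same constant dependence.
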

\begin{proof}
The proofs for general analytic semigroups can be found in e.g.~\cite{2000,  Henry_1981, Pazy_1983}.
We include the main ideas for completeness in this complex setting. 

  $(i)$
  Using the definition of the norm in $\dot H^\alpha$ we obtain   
     $$
     \begin{aligned} 
     \|e^{tA}u\|_{\dot{H}^{\alpha}}
  &  = e^{t} \Big[\sum_{k\in\Z}(1+\lambda_k)^{\alpha-\beta} e^{-2(1+\lambda_k) t}(1+\lambda_k )^\beta|a_k|^2\Big]^{\frac{1}{2}}
     \\
    & \leq e^t\Big[\sum_{k\in\Z }\left(\frac{\alpha-\beta}{2t}\right)^{\alpha-\beta} e^{-(\alpha-\beta)}(1+\lambda_k)^{\beta}|a_k|^2\Big]^{\frac{1}{2}}\leq C_{\alpha,\beta,T} t^{\frac{\beta-\alpha}{2}}\|u\|_{\dot{H}^{\beta}}.
     \end{aligned} 
     $$
     Here we used that  $f(x)=x^{\alpha-\beta} e^{-2t x}$ takes its maximum at $x= (\alpha-\beta)/2t$.
     
$(ii)$ This follows from the relation 
$$\tilde{\Delta}^{\gamma}e^{tA}\phi_k=(1+\lambda_k)^{\gamma}e^{-(1+i\nu)\lambda_kt}\phi_k,$$
and the fact that the function $f(x)=x^{\gamma}e^{-xt}$ takes its maximum at $x=\gamma/ t$, and that  $|e^{-i\nu(\lambda_k+1)t}|=1$. 

Finally to show $(iii)$ we can
use the estimate from $(ii)$ to obtain 
$$
\|\tilde \Delta^{-\gamma}(I - e^{tA}) u\| = \Big\|\int_0^t \tilde \Delta^{-\gamma} A e^{sA} \, u\,  ds\Big\| \leq C_{T,\gamma} \int_0^t s^{\gamma-1} \|u\| ds \leq C_{T,\gamma} t^\gamma \|u\|.
$$
\end{proof}

\subsection{Stochastic convolution}
Since most standard text books, e.g.  \cite{lord_powell_shardlow_2014,da_prato_zabczyk_2014,Cerrai2001SecondOP}, consider a real Wiener processes,  we present here some results for the complex case. Consider  a filtered probability space $(\Omega,\mathscr{F},\mathscr{F}_t,\Prob)$ and a separable Hilbert space $X$.
\begin{definition}\label{qk}
We define an $\mathscr{F}_t$-adapted 
stochastic process $\{W(t)\}_{t\geq 0}$ as complex $\dot{H}^r-$valued noise if for  $\varepsilon\in(0,\frac{3}{2})$  there is a set of positive numbers $\{q_k\}_{k\in\Z}$,  with $q_k\leq C|k|^{-2r-1-2\varepsilon}$, such that $W(t)$ has the same distribution as
\begin{equation}\label{eq:W_rep}
W(t)=\sum_{k\in\Z}\sqrt{q_k}(\beta^r_k(t)+i\beta^i_k(t))\phi_k,
\end{equation}
where $\beta^r_k(t)$ and $\beta^i_k(t)$ are i.i.d.~Brownian motions and $\{\phi_k\}_{k \in \mathbb Z}$ is an orthonormal basis for $X$.
\end{definition}
The representation \eqref{eq:W_rep} can also be written as
\begin{equation}\label{eq:BA_noise}
W(t) = \sum_{k\in\Z}B\tilde{\Delta}^{-\frac{r}{2}-\frac{1}{4}-\frac{\varepsilon}{2}}(\beta^r_k(t)+i\beta^i_k(t))\phi_k,
\end{equation}
where $B\in\mathcal{L}(L^2)$ and
$$B\tilde{\Delta}^{-\frac{r}{2}-\frac{1}{4}-\frac{\varepsilon}{2}}\phi_k=\sqrt{q_k}\phi_k.$$
In our analysis below we exploit this property of $B$.
For  the complex $\dot{H}^r-$valued noise $\{W(t)\}_{t\geq 0}$  the stochastic convolution, given by
\begin{equation}\label{def_stoch_conv}
\begin{aligned} 
\eta(t)& :=\sigma \int_0^t e^{(t-s)A}dW(s)\\
&=\sigma \sum_{k\in\Z}\left[\int_0^t \sqrt{q_k}e^{(t-s)A}d\beta_k^r(s)+i\int_0^t \sqrt{q_k}e^{(t-s)A}d\beta_k^i(s)\right]\phi_{k}\\
&= \sigma \sum_{k\in\Z}\sqrt{q_k}\Big(\int_0^te^{(s-t)(1+i\nu)\lambda_k}d\beta^r_k(s)+i\int_0^te^{(s-t)(1+i\nu)\lambda_k}d\beta^i_k(s)\Big)\phi_k,
\end{aligned} 
\end{equation}
is the mild solution of the SPDE for $t\in (0,\infty)$
\begin{equation*}
       d\eta =A\eta dt+\sigma dW(t), \qquad 
       \eta(0)=0.
\end{equation*}
From now on we also do not include the dependence on the constants $C>0$ on the parameters $\varepsilon$ and $r$ from Definition~\ref{qk}.
\begin{lemma}\label{stoch_app}
For  $p\in[2,\infty)$, $T>0$, there exists a constant $C_{ T,p}>0$ such  that
$$\E\big[\sup_{t\in[0,T]}\|\eta(t)\|^p_{\dot{H}^{r+1}}\big]\leq C_{T,p}.$$
\end{lemma}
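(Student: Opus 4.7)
My plan is to combine the factorization method of Da Prato--Kwapie\'n--Zabczyk with Gaussian moment equivalence on the Hilbert space $\dot H^{r+1}$. Since $(\E|X|^p)^{1/p}$ is non-decreasing in $p$ by Jensen's inequality, I first reduce to proving the bound for arbitrarily large $p$; the range $p\in[2,p_0]$ then follows automatically. For such a $p$, I pick $\alpha\in(1/p,\,\min(1/2,\varepsilon/2))$, which is a non-empty interval once $p$ is large (with $\varepsilon$ as in Definition~\ref{qk}), and introduce the auxiliary Gaussian process $Y(s):=\sigma\int_0^s (s-r)^{-\alpha}e^{(s-r)A}\,dW(r)$. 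The Beta-function identity $\int_s^t(t-r)^{\alpha-1}(r-s)^{-\alpha}\,dr=\pi/\sin(\pi\alpha)$, combined with a stochastic Fubini argument and the semigroup property, yields the factorization
$$\eta(t) = \frac{\sin(\pi\alpha)}{\pi}\int_0^t (t-s)^{\alpha-1} e^{(t-s)A} Y(s)\, ds.$$

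Using that $e^{tA}$ is a contraction on $\dot H^{r+1}$ (immediate from its spectral representation, since $|e^{-(1+i\nu)\lambda_k t}|=e^{-\lambda_k t}\leq 1$), I next apply H\"older's inequality with exponents $p$ and $p/(p-1)$ in the variable $s$ to obtain
$$\sup_{t\in[0,T]}\|\eta(t)\|^p_{\dot H^{r+1}} \leq C_{T,p} \int_0^T \|Y(s)\|^p_{\dot H^{r+1}}\, ds,$$
the prefactor being finite precisely because $\alpha>1/p$. Taking expectations and invoking Gaussian moment equivalence on $\dot H^{r+1}$ reduces the problem to bounding $\E\|Y(s)\|^2_{\dot H^{r+1}}$ uniformly in $s\in[0,T]$. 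This second moment, computed via It\^o's isometry applied to the series representation of $W$, equals $2\sigma^2 \sum_{k\in\Z} q_k(1+\lambda_k)^{r+1}\int_0^s u^{-2\alpha}e^{-2\lambda_k u}\,du$. The $k=0$ mode contributes $q_0\, s^{1-2\alpha}/(1-2\alpha)$, which is finite as $\alpha<1/2$; for $k\neq 0$, extending the inner integral to $[0,\infty)$ gives $\Gamma(1-2\alpha)(2\lambda_k)^{2\alpha-1}$, and combining this with $q_k\leq C|k|^{-2r-1-2\varepsilon}$ and $\lambda_k=(2\pi k)^2$ bounds the tail sum by a constant multiple of $\sum_{k\neq 0}|k|^{-1-2\varepsilon+4\alpha}$, which converges precisely because $\alpha<\varepsilon/2$.

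The technical crux, and the main obstacle, is to satisfy the simultaneous constraint $1/p<\alpha<\min(1/2,\varepsilon/2)$: the lower bound is required for integrability of the factorization kernel in H\"older's inequality, while the upper bound is forced by the noise regularity through the second-moment computation. Both can be met only for sufficiently large $p$, which is why the Jensen reduction is the very first step. A secondary, but mild, point to handle is the complex structure of $A$: since the eigenvalues $-(1+i\nu)\lambda_k$ satisfy $|e^{-(1+i\nu)\lambda_k t}|=e^{-\lambda_k t}$, the imaginary part of $A$ never enters the moment estimates, and all calculations proceed just as in the real self-adjoint setting.
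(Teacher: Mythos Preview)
Your proof is correct and follows essentially the same route as the paper: both use the Da Prato--Kwapie\'n--Zabczyk factorization, bound $\sup_t\|\eta(t)\|^p$ by $\int_0^T\|Y\|^p$ via H\"older after choosing the factorization exponent in $(1/p,\varepsilon/2)$, estimate moments of $Y$ (you via Gaussian hypercontractivity and direct It\^o--isometry series computation, the paper via the Burkholder--Davis--Gundy inequality and Hilbert--Schmidt operator bounds), and then recover small $p$ from large $p$ by monotonicity. The only cosmetic differences are your use of Gaussian moment equivalence in place of BDG and the explicit series estimate in place of the paper's $\|\tilde\Delta^{-1/4-\varepsilon/6}\|_{\mathcal L^2(L^2)}$ bound, but these are equivalent here.
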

\begin{proof}
    First consider  $p>6/\varepsilon$. 
    Using the Da Prato-Kwapie\'{n}-Zabczyk factorization method \cite[Theorem~5.10]{DaPrato2004} we  can rewrite the stochastic convolution as
    $$\eta(t)=\sigma\frac{\sin(\pi \gamma)}{\pi}\int_0^t (t-\tau)^{\gamma-1}\! e^{(\tau-t)A}Y(\tau)d\tau, \; \text{ with }
    Y(\tau)=\int_0^{\tau}(\tau-s)^{-\gamma}\! e^{(\tau-s)A}dW(s),
    $$
    for $\gamma\in(0,\frac{1}{2})$.
    Choosing $\gamma =  \frac{\varepsilon}{6}$ and applying  H\"older's inequality yield
    \begin{align*}
        \|\eta(t)\|_{\dot{H}^{r+1}}^p&\leq \sigma\frac{\sin(\pi \frac{\varepsilon}{6})^p}{\pi^p}\Big(\int_0^t (t-\tau)^{(\frac{\varepsilon}{6}-1)\frac{p}{p-1}} d\tau\Big)^{p-1}\int_0^t\|Y(\tau)\|^p_{\dot{H}^{r+1}}d\tau\\
        &\leq C_{T,p}\int_0^T\|Y(\tau)\|^p_{\dot{H}^{r+1}}d\tau, 
    \end{align*}
 for all $t \in [0,T]$ and $p>6/\varepsilon$.    Furthermore, using the Burkholder-Davis-Grundy inequality \cite{Liu2015} we obtain
    \begin{align*}
        \E(\|Y(\tau)\|^p_{\dot{H}^{r+1}})&\leq C_p \Big(\int_0^{\tau} (\tau-s)^{-\frac{\varepsilon}{3}}\|\tilde{\Delta}^{\frac{r+1}{2}}e^{(\tau-s)A}B\tilde{\Delta}^{-\frac{r}{2}-\frac{1}{4}-\frac{\varepsilon}{2}}\|^2_{\mathcal L^2(L^2)}ds\Big)^{\frac{p}{2}}\\
        &\leq C_p \big(\int_0^{\tau} (\tau-s)^{-1+\frac{\varepsilon}{3}}\|\tilde{\Delta}^{-\frac{1}{4}-\frac{\varepsilon}{6}}\|^2_{\mathcal L^2(L^2)}ds\big)^{\frac{p}{2}} \\
        &\leq C_{p,T} \|\tilde{\Delta}^{-\frac{1}{4}-\frac{\varepsilon}{6}}\|^p_{\mathcal L^2(L^2)}\leq  C_{p,T} \Big(\sum_{k\in\Z} k^{-1-\frac{2\varepsilon}{3}}\Big)^{\frac{p}{2}}\leq C_{T,p}, 
    \end{align*}
   and hence the estimate stated in the lemma for $p>6/\varepsilon$.   For  $2\leq p \leq 6/\varepsilon$, we have
   \begin{align*}
        \E\big[\sup_{t\in[0,T]}\|\eta(t)\|^p_{\dot{H}^{r+1}}\big] & \leq \E\big[\mathbf{1}_{\sup_{t\in[0,T]}\|\eta(t)\|^p_{\dot{H}^{r+1}}\leq 1}\big]\\
         &\quad +\E\big[\mathbf{1}_{\sup_{t\in[0,T]}\|\eta(t)\|^p_{\dot{H}^{r+1}}>1}\|\eta(\tau)\|^{\frac{7}{\varepsilon}}_{\dot{H}^{r+1}}\big] \leq 1+\E\big[\|\eta(\tau)\|^{\frac{7}{\varepsilon}}_{\dot{H}^{r+1}}\big].
   \end{align*}
    This yields the result for all $p\in [2, \infty)$. 
\end{proof}

\begin{lemma}\label{stoch_time_reg}
    For the stochastic convolution $\eta$, defined in \eqref{def_stoch_conv}, and $r\geq 0$, there is a $C_T>0$ such that
    $$\E\big[\|\eta(t)-\eta(s)\|^2\big]\leq C_T (t-s) \quad \text{ for all } \; 0 \leq s\leq t \leq T.$$
\end{lemma}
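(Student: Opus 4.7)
The plan is to use the standard decomposition obtained from the semigroup property $e^{(t-\tau)A} = e^{(t-s)A} e^{(s-\tau)A}$ for $\tau \in [0,s]$, which factorises $e^{(t-s)A}$ out of the stochastic integral over $[0,s]$:
\[
\eta(t) - \eta(s) = \sigma \int_s^t e^{(t-\tau)A}\,dW(\tau) \;+\; \bigl(e^{(t-s)A} - I\bigr)\eta(s).
\]
I would then bound the two summands separately in $L^2(\Omega; L^2)$ and add the estimates.

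For the first summand I would apply the It\^o isometry to the real and imaginary stochastic integrals in \eqref{def_stoch_conv}, using the representation \eqref{eq:BA_noise}, to obtain
\[
\E\Big\|\sigma\!\int_s^t e^{(t-\tau)A}\,dW(\tau)\Big\|^2 = 2\sigma^2 \int_s^t \bigl\|e^{(t-\tau)A} B \tilde\Delta^{-\frac{r}{2}-\frac{1}{4}-\frac{\varepsilon}{2}}\bigr\|_{\mathcal L^2(L^2)}^2 d\tau.
\]
Using \eqref{eq:Hilbertt_op_ineq} together with the $L^2$-contraction property of $e^{tA}$, this is dominated by $2\sigma^2 (t-s) \sum_{k \in \Z} q_k$, and the sum converges because $q_k \leq C|k|^{-2r-1-2\varepsilon}$ with $r \geq 0$ and $\varepsilon > 0$.

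For the second summand, since $\tilde\Delta^{1/2}$ and $I - e^{(t-s)A}$ are simultaneously diagonalised in the basis $\{\phi_k\}$, they commute, and Lemma~\ref{lem:semigroup}(iii) with $\gamma = \tfrac12$ gives
\[
\bigl\|(I - e^{(t-s)A})\eta(s)\bigr\| \leq \bigl\|\tilde\Delta^{-1/2}(I - e^{(t-s)A})\bigr\|_{\mathcal L(L^2)} \|\eta(s)\|_{\dot H^1} \leq C_T (t-s)^{1/2} \|\eta(s)\|_{\dot H^1}.
\]
Squaring, taking expectation, and applying Lemma~\ref{stoch_app} with $p=2$ together with the embedding $\dot H^{r+1} \hookrightarrow \dot H^1$ (valid since $r \geq 0$) bounds this term by $C_T (t-s)$ as well. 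Summing the two contributions yields the stated estimate.

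No essential obstacle arises: this is the standard Kolmogorov-type time regularity argument for analytic-semigroup stochastic convolutions, and the complex setting enters only through the factor $2$ from the independent real and imaginary Brownian components. The two places that require care are checking that $r \geq 0$ is precisely what makes both $\sum_k q_k$ finite and gives the embedding into $\dot H^1$, and verifying that the commutation between $\tilde\Delta^{1/2}$ and $I - e^{(t-s)A}$ makes Lemma~\ref{lem:semigroup}(iii) directly applicable.
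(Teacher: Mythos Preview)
Your proof is correct, and the decomposition into the $[s,t]$ integral plus $(e^{(t-s)A}-I)\eta(s)$ is exactly the one the paper uses (written there as $\int_0^s e^{(s-\tau)A}(e^{(t-s)A}-I)\,dW(\tau)$). The treatment of the $[s,t]$ integral via It\^o isometry is also the same.

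Where you diverge is in the handling of the $(e^{(t-s)A}-I)\eta(s)$ term. The paper keeps this term as a stochastic integral and applies the It\^o isometry a second time, then inserts powers of $\tilde\Delta$ to split the Hilbert--Schmidt norm into a product of operator norms that are controlled by Lemma~\ref{lem:semigroup}(ii)--(iii); this is self-contained but requires a small bookkeeping argument with the exponent $\varepsilon$ to make the time integral converge. You instead treat $\eta(s)$ as a random element of $\dot H^1$, apply the pathwise operator bound from Lemma~\ref{lem:semigroup}(iii), and then invoke the already-established moment bound of Lemma~\ref{stoch_app}. Your route is shorter and reuses existing machinery; the paper's route avoids relying on the factorisation argument behind Lemma~\ref{stoch_app} and would still work in settings where only $L^2(\Omega)$ bounds (not $\sup_t$ bounds) on $\eta$ are available. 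Both are perfectly valid here.
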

\begin{proof}
 From the definition of $\eta$ it follows that 
 \begin{equation*}
     \begin{split}
        \E\big[\|\eta(t)-\eta(s)\|^2\big]&=\sigma\E\left[\Big\|\int_0^s \!\! \big(e^{(t-\tau)A}-e^{(s-\tau)A}\big)dW(\tau)+\int_s^t \!\! e^{(t-\tau)A}dW(\tau)\Big\|^2\right]\\
         &=\sigma\E\left[\Big\|\int_0^s\!\!\! e^{(s-\tau)A}\big(e^{(t-s)A}-I\big)dW(\tau)\Big\|^2\right]+\sigma\E\left[\Big\|\int_s^t \!\!\! e^{(t-\tau)A}dW(\tau)\Big\|^2\right]=:\sigma(I+II).
     \end{split}
 \end{equation*}
To obtain $I$ and $II$ we used the fact that the  stochastic integrals over $(0,s)$ and $(s,t)$ are independent and their expected value is zero. 

  Let us examine $I$. Using the It\^o isometry, see \cite[Theorem 10.16]{lord_powell_shardlow_2014}, $\eta(t)$ has the same law as~\eqref{eq:BA_noise}, and the fact that $W$ is given by two independent Wiener processes, one for the real and one for the imaginary part, we obtain 
 \begin{equation*}
  \begin{split}
  I& = 2\int_0^s\big\|e^{(s-\tau)A}(e^{(t-s)A}-I)B\tilde{\Delta}^{-\frac{r}{2}-\frac{1}{4}-\frac{\varepsilon}{2}}\big\|^2_{\mathcal{L}^2(L^2)}d\tau\\
   &\leq 2\int_0^s\|\tilde{\Delta}^{\frac{1}{2}-\frac\varepsilon 8}e^{(s-\tau)A}\tilde{\Delta}^{-\frac{1}{2}}(e^{(t-s)A}-I) \tilde{\Delta}^{\frac\varepsilon 8} B\tilde{\Delta}^{-\frac{r}{2}-\frac{1}{4}-\frac{\varepsilon}{2}}\|^2_{\mathcal{L}^2(L^2)}d\tau .
   \end{split}
   \end{equation*}
   Applying inequality~\eqref{eq:Hilbertt_op_ineq} and using that  the operators are diagonal with respect to the Fourier basis
\begin{equation*}
  \begin{split}
  I& \leq C\int_0^s\|\tilde{\Delta}^{\frac{1}{2}-\frac\varepsilon 8}e^{(s-\tau)A}\|^2_{\mathcal{L}(L^2)}\big\|\tilde\Delta^{-\frac{1}{2}}(e^{(t-s)A}-I)\big\|^2_{\mathcal{L}(L^2)}\|\tilde{\Delta}^{\frac\varepsilon 8} B\tilde \Delta^{-\frac{r}{2}-\frac{1}{4}-\frac{\varepsilon}{2}}\|^2_{\mathcal{L}^2(L^2)}d\tau.  
  \end{split}   
 \end{equation*}
 Using  Lemma~\ref{lem:semigroup}
  and  that $\|\tilde{\Delta}^{\frac\varepsilon 8} B\tilde \Delta^{-\frac{r}{2}-\frac{1}{4}-\frac{\varepsilon}{2}}\|_{\mathcal{L}^2(L^2)}$ is finite for $r\geq 0$, we have
 \begin{equation*}
 \centering
 \begin{split}
   I&\leq C_T\int_0^s \frac{t-s}{(s-\tau)^{1-\varepsilon/4}}d\tau\leq C_T(t-s).
 \end{split}   
 \end{equation*}
 For the second term $II$, we use Itô's isometry and Lemma~\ref{lem:semigroup} to obtain
 \begin{equation*}
     \begin{split}
       II&\leq  \int_s^t \big\|e^{(t-\tau)A} B\tilde{\Delta}^{-\frac{r}{2}-\frac{1}{4}-\frac{\varepsilon}{2}}\big\|^2_{\mathcal{L}^2(L^2)}d\tau\leq \int_s^t \|e^{(t-\tau)A}\|^2_{\mathcal{L}(L^2)}\|B\tilde{\Delta}^{-\frac{r}{2}-\frac{1}{4}-\frac{\varepsilon}{2}}\|^2_{\mathcal{L}^2(L^2)}d\tau \leq C_T(t-s).
     \end{split}
 \end{equation*}
 Combining the estimates on $I$ and $II$ implies the result stated in lemma.
\end{proof}

\section{Existence, uniqueness, and moment bounds of the SCGLE}
\label{sec_SCGLE}
To prove convergence of our numerical method we need moment bounds for the mild solution of~\eqref{eq:CGLE} and of an  auxiliary SPDE, defined later in the text. In this section we proof moment bounds for the mild solution of~\eqref{eq:CGLE}, but similar arguments can be applied for the mild solution of the auxiliary SPDE. 
Although existence of a weak solution of \eqref{eq:CGLE} was shown in \cite{CHENG202358} we require higher regularity and moment bounds in $H^1$ not shown in \cite{CHENG202358}. Furhtermore, we have no restriction on $\mu$ unlike in \cite{CHENG202358}.

\subsection{The non-linear term}
Before deriving a priori estimates for the SCGLE \eqref{eq:CGLE} and proving the well-posedness result, we examine properties of the term
\begin{equation}
    \label{eq:Psi_0}
\Psi_{0}(u):=Ru-(1+i\mu)|u|^2u.
\end{equation}
For  $u,v\in V$, using  the triangle and Young's inequalities,
and  considering the $L^2$ and $V$ norms, respectively,  we obtain
\begin{equation}\label{eq:loc_lip_L2}
\begin{aligned}
   \|\Psi_{0}(u)-\Psi_{0}(v)\| &\leq R\|u-v\| + \frac{3}{2}\sqrt{1+\mu^2}\big(\|u\|^2_V+\|v\|^2_V\big)\|u-v\|,\\
    \|\Psi_{0}(u)-\Psi_{0}(v)\|_V &\leq R \|u-v\|_V +\frac{3}{2}\sqrt{1+\mu^2}\big(\|u\|^2_V+\|v\|^2_V \big)\|u-v\|_V.
   \end{aligned} 
\end{equation}
Choosing  $v=0$ in \eqref{eq:loc_lip_L2} gives a bound for $\Psi_{0}(u)$  with respect to the $L^2$-norm
\begin{equation}\label{eq:F_bound}
   \|\Psi_0(u)\|\leq R\|u\|+\frac{3}{2}\sqrt{1+\mu^2}\|u\|_V^2\|u\|\leq C(1+\|u\|_V^2)\|u\|, \quad \text{ for } \; u \in V.
\end{equation}
We also use the following inequality. 
\begin{lemma}\label{lipy}
    There is a constant $C>0$ such that 
    $$\big\langle\Psi_0(u+v), u\big\rangle_{\R}\leq C\big(1+|u|^2+|v|^4\big).$$
\end{lemma}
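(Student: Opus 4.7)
The plan is to read the statement as a pointwise inequality for $u,v \in \C$, since the single-bar notation $|u|^2$, $|v|^4$ refers to the complex modulus and $\ipre{a,b}$ on $\C$ is simply $\Re(\bar a b)$. I would then split
\[
\ipre{\Psi_0(u+v),u} = \ipre{\Psi_0(u),u} + \ipre{\Psi_0(u+v)-\Psi_0(u),u},
\]
relying on the strong dissipation $-|u|^4$ coming out of the first term to absorb the indefinite cross-terms produced by the second.

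A direct computation on $\C$ would give $\ipre{\Psi_0(u),u} = R|u|^2 - \Re((1+i\mu)|u|^4) = R|u|^2 - |u|^4$, since the $i\mu$ contribution vanishes under the real part. For the second term I would use $|\ipre{a,b}| \leq |a||b|$ together with the pointwise analogue of the local Lipschitz identity underlying \eqref{eq:loc_lip_L2}, namely
\[
|\Psi_0(u+v)-\Psi_0(u)| \leq R|v| + \tfrac{3}{2}\sqrt{1+\mu^2}\bigl(|u|^2+|u+v|^2\bigr)|v|,
\]
which follows from the telescoping identity $|a|^2 a - |b|^2 b = |a|^2(a-b) + (|a|^2-|b|^2)b$ applied with $a=u+v$, $b=u$. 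After bounding $|u+v|^2 \leq 2|u|^2 + 2|v|^2$ and multiplying by $|u|$, only three indefinite cross-terms remain: $R|u||v|$, $|u|^3|v|$, and $|u||v|^3$.

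The final step is to dispatch these via Young's inequality. A rescaled $|u|^3|v| \leq \delta|u|^4 + C_\delta|v|^4$ (valid for any $\delta>0$) and the standard $|u||v|^3 \leq \tfrac14|u|^4 + \tfrac34|v|^4$ let me absorb both cubic cross-terms into the $-|u|^4$ from the first piece, provided $\delta$ is chosen small enough that the overall coefficient of $|u|^4$ stays nonpositive. The remaining linear correction $R|u||v| \leq \tfrac{R}{2}|u|^2 + \tfrac{R}{2}|v|^2$, combined with $|v|^2 \leq \tfrac12(1+|v|^4)$, fits inside $C(1+|u|^2+|v|^4)$, yielding the claim. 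The only real obstacle is the bookkeeping that ensures no stray $|u|^4$ survives after Young's is applied to both cubic cross-terms; using a rescaled Young's on $|u|^3|v|$ with sufficiently small $\delta$, rather than the plain version, is exactly what resolves this.
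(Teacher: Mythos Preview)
Your proof is correct and takes a genuinely different route from the paper. The paper argues by a case split on the relative sizes of $|u|$ and $|v|$: when $|v|>\frac{1}{1+|\mu|}|u|$ a crude bound $|\Psi_0(u+v)||u|\le C(1+|u|^3+|v|^3)|v|\le C(1+|v|^4)$ suffices; when $|v|\le\frac{1}{1+|\mu|}|u|$ the authors verify directly that $\ipre{(1+i\mu)|u+v|^2(u+v),u}\ge 0$, so only the linear piece $R\ipre{u+v,u}$ survives. Your approach instead adds and subtracts $\Psi_0(u)$, extracts the dissipation $-|u|^4$ from $\ipre{\Psi_0(u),u}$, and uses the pointwise Lipschitz bound plus Young's inequalities to absorb all cross-terms into this quartic well. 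This is the standard ``dissipativity plus locally Lipschitz perturbation'' pattern and is arguably more systematic; the paper's case split avoids any tracking of Young constants and instead exploits the exact sign structure of the cubic term.

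One small bookkeeping point: the term $|u||v|^3$ arrives with a prefactor $3\sqrt{1+\mu^2}$ (from $\tfrac32\sqrt{1+\mu^2}\cdot 2|v|^2\cdot|v|\cdot|u|$), so the ``standard'' Young inequality $|u||v|^3\le\tfrac14|u|^4+\tfrac34|v|^4$ contributes $\tfrac34\sqrt{1+\mu^2}\,|u|^4$, which exceeds~$1$ once $|\mu|$ is moderately large. No choice of $\delta$ on the other cubic term then rescues the sign. The fix is immediate: apply a rescaled Young inequality to \emph{both} cubic cross-terms, i.e.\ also write $C_1|u||v|^3\le\delta'|u|^4+C_{\delta'}|v|^4$, and choose $\delta+\delta'<1$.
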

\begin{proof}
We divide the proof into two cases. 
First consider $|v|>\frac{1}{1+|\mu|}|u|$.
Using that $\Psi_0(\cdot)$ can be bounded by a cubic function, yields 
$$\langle\Psi_0(u+v), u\rangle_{\R}\leq |\Psi_0(u+v)||u|\leq C (1+|u|^3+|v|^3)|v|\leq C(1+|v|^4).
$$
In the second case, when
$|v|\leq\frac{1}{1+|\mu|}|u|$,
writing $v=v_1+i v_2$ for  $v_1,v_2\in\R$, we obtain 
$$
\begin{aligned}
\langle(1+i\mu)|u+v|^2(u+v),  u\rangle_{\R}&= |u+v|^2(|u|^2 +  u_1(v_1-\mu v_2) + u_2(v_2 + \mu v_1))\\
&\geq |u+v|^2\big(|u|^2 -  (1+ |\mu|) |u||v|\big)  \geq 0.
\end{aligned}
$$
Hence 
$$
\langle\Psi_0(u+v), u\rangle_{\R}=R\langle u+v, u\rangle_{\R}-\langle(1+i\mu)|u+v|^2(u+v), u\rangle_{\R} \leq
C(|u|^2+|v|^2).
$$
\end{proof}

\subsection{A priori estimates}
The mild solution for the SCGLE~\eqref{eq:CGLE} is given by 
\begin{equation}\label{eq:mild_SCGLE}
u(t)=e^{A t}u_0+\int_0^t e^{(t-s)A}\Big( Ru(s)-(1+i\mu)|u(s)|^2u(s)\Big) ds+ \sigma \int_0^te^{(t-s)A} dW(s). 
\end{equation}
To show the existence almost surely (a.s) of  a unique mild solution of the SCGLE~\eqref{eq:CGLE}, 
we consider $v(t)=u(t)-\eta(t)$ pathwise, where $v$ satisfies
\begin{equation}\label{eq:RPDE}
\begin{aligned}
\partial_t v &= (1+i\nu)\Delta v(t)+R(v(t)+\eta(t))-(1+i\mu)|v(t)+\eta(t)|^2(v(t)+\eta(t)), \\
v(0)& =u_0,
\end{aligned}
\end{equation}
and $\eta$ is the stochastic convolution defined in \eqref{def_stoch_conv}.
For the noise $W(t)$ which is $\dot{H}^r$-valued with $r\geq0$,  the stochastic convolution $\eta \in L^p(\Omega;L^{\infty}(0,T;\dot{H}^{\alpha}))$ for all $\alpha\leq 1+r$ and  $p\geq1$. 
Our approach to prove the   pathwise existence and uniqueness results for~\eqref{eq:CGLE} is to use the Galerkin method and energy estimates. We first prove a priori estimates, assuming existence of sufficiently regular solutions. Then we apply these estimates to the Galerkin approximation, and in the limit we obtain existence of the mild solution for~\eqref{eq:CGLE}. 
\begin{lemma}\label{energy}
    Let $v \in L^2(0,T;\dot{H}^1)$ be a weak solution of~\eqref{eq:RPDE} and let $u_0\in L^2$. 
    Then there exists a constant $C_T>0$ such that
    \begin{equation}\label{estim_H1}
    \sup_{t\in[0,T]}\|v(t)\|^2+\| v\|^2_{L^2(0,T;\dot{H}^1)}\leq C_T\big(1+\|u_0\|^2+\|\eta\|_{L^4((0,T)\times \mathbb T)}^4\big).
    \end{equation}
   If  $v\in L^2(0,T;\dot{H}^2(\mathbb T))$ and $u_0\in \dot{H}^1$,  then 
    \begin{equation}\label{estim_H2}
   \sup\limits_{t\in [0,T]} \|v(t)\|_{\dot{H}^1}^2 +\|v\|^2_{L^2(0,T; \dot{H}^2)}\leq C_T\big(1+ \|u_0\|^2_{\dot{H}^1}+\sup_{t\in[0,T]}\|\eta(t)\|_{L^2}^{12} + \|\eta\|^{12}_{L^2(0,T; \dot H^1)}\big).
    \end{equation}
If additionally   $\partial_t v\in L^2((0,T)\times\mathbb T)$, then
    \begin{equation}\label{estim_part_t}
    \|\partial_t v\|^2_{L^2((0,T)\times \mathbb T)}\leq C_T\big(1+\|u_0\|^2_{\dot{H}^1}+\sup_{t\in[0,T]}\|\eta(t)\|_{L^2}^{12} + \|\eta\|^{12}_{L^2(0,T; \dot H^1)}\big).
    \end{equation}
\end{lemma}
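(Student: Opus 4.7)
The plan is to prove the three estimates in sequence by standard energy identities for \eqref{eq:RPDE}, treating $\eta$ pathwise as a given forcing and invoking Gr\"onwall's inequality at each step. Each bound will feed into the next.

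For \eqref{estim_H1} I would test \eqref{eq:RPDE} against $v$ in $L^2$ and take the real part of the inner product. Since $\ip{\Delta v,v}=-\|\na v\|^2$ is real, only the $1$-piece of $(1+i\nu)$ contributes, giving
\[
\tfrac{1}{2}\tfrac{d}{dt}\|v\|^2+\|\na v\|^2=\ipre{\Psi_0(v+\eta),v}.
\]
Applying Lemma~\ref{lipy} pointwise (with $u\leftrightarrow v(x)$ and $v\leftrightarrow\eta(x)$) and integrating over $\T$ bounds the right-hand side by $C(1+\|v\|^2+\|\eta\|^4_{L^4(\T)})$, after which Gr\"onwall in time yields \eqref{estim_H1}.

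For \eqref{estim_H2} I would test with $-\Delta v$ and take the real part. The linear part contributes $\|\Delta v\|^2$ on the left, and the nonlinear term is bounded by Cauchy--Schwarz and Young as
\[
|\ipre{\Psi_0(v+\eta),\Delta v}|\leq\tfrac{1}{2}\|\Delta v\|^2+C\bigl(1+\|v\|^6_{L^6}+\|\eta\|^6_{L^6}\bigr).
\]
In one space dimension, the Gagliardo--Nirenberg inequality gives $\|w\|^6_{L^6}\leq C\|w\|^2_{\dot H^1}\|w\|^4$, so the $v$-contribution takes the form $\|\na v\|^2\,M$ with $M\leq C(1+\sup_{[0,T]}\|v\|^4)$ already controlled by \eqref{estim_H1}. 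A second application of Gr\"onwall then produces \eqref{estim_H2}; the exponents on $\sup_t\|\eta\|$ and $\|\eta\|_{L^2(0,T;\dot H^1)}$ appear after re-expressing $\|\eta\|^6_{L^6}$ through the same Gagliardo--Nirenberg inequality and tracking the polynomial growth through the two nested Gr\"onwall bounds.

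Estimate \eqref{estim_part_t} follows directly from \eqref{eq:RPDE}: $\|\pa_t v\|\leq\sqrt{1+\nu^2}\,\|\Delta v\|+\|\Psi_0(v+\eta)\|$, squared and integrated over $(0,T)$. The first piece is controlled by \eqref{estim_H2}, and the second by $\|\Psi_0(v+\eta)\|^2\leq C(1+\|v+\eta\|^6_{L^6})$ together with Gagliardo--Nirenberg and \eqref{estim_H1}--\eqref{estim_H2}. The main obstacle in the whole argument is the nonlinear term in \eqref{estim_H2}: unlike in the $L^2$ test, the sign cancellation exploited in Lemma~\ref{lipy} is unavailable when pairing with $-\Delta v$, so the cubic factor must be absorbed into $\|\Delta v\|^2$ at the price of an $L^6$ norm, and one must use the specifically one-dimensional embedding $\dot H^1\hookrightarrow L^\infty$ (equivalently the $L^6$ Gagliardo--Nirenberg inequality) to turn that $L^6$ term into something linear in $\|\na v\|^2$ with a prefactor already bounded by \eqref{estim_H1}.
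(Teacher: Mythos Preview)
Your strategy for \eqref{estim_H1} and \eqref{estim_part_t} is exactly the paper's. The one point that needs correction is the closure of \eqref{estim_H2}. After Gagliardo--Nirenberg you arrive at a differential inequality of the form
\[
\frac{d}{dt}\|\nabla v\|^2+\|\Delta v\|^2\leq C\bigl(1+\|\eta\|^6_{L^6}\bigr)+CM\,\|\nabla v\|^2,\qquad M=C\bigl(1+\sup_{[0,T]}\|v\|^4\bigr),
\]
and you propose to close it by Gr\"onwall. But Gr\"onwall here produces a factor $e^{CMT}$, and $M$ depends on the data through \eqref{estim_H1}, so the resulting bound is \emph{exponential} in $\|u_0\|$ and $\|\eta\|_{L^4}$, not polynomial as stated. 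This matters: the lemma is applied pathwise and then averaged to get the moment bounds in Theorem~\ref{mom_us}, and exponentials of high powers of the Gaussian $\eta$ are not integrable.

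The fix---and this is what the paper does---is to avoid Gr\"onwall altogether in the second step: simply integrate the differential inequality over $[0,t]$ and observe that the dangerous term $\int_0^t M\|\nabla v(s)\|^2\,ds\leq M\,\|v\|^2_{L^2(0,T;\dot H^1)}$ is \emph{already} controlled by \eqref{estim_H1}. Both factors are polynomial in the data, so their product is too, and the exponents $12$ on the $\eta$-norms drop out after substituting \eqref{estim_H1} and applying Gagliardo--Nirenberg once more to $\|\eta\|^8_{L^4((0,T)\times\T)}$. In short, the bound on $\int_0^T\|\nabla v\|^2$ from the first energy estimate is what replaces the second Gr\"onwall.
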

\begin{proof}
Taking $\overline{v}(t)$ as test function for the weak solution of \eqref{eq:RPDE}, integrating over $\T$,  taking the real part  and using Lemma~\ref{lipy}  yields
$$\frac{1}{2}\frac{d}{dt}\|v(t)\|^2+\|\nabla v(t)\|^2\leq C\big(1+\|v(t)\|^2+\|\eta(t)\|^4_{L^4}\big).$$
Integrating with respect to time and using Grönwall's inequality implies
$$
\|v(t)\|^2+\int_0^t \|\nabla v(s)\|ds \leq  C_T\Big[\|u_0\|^2 + \int_0^t[1+\|\eta(s)\|^4_{L^4}]ds\Big],
$$
for $t \in (0,T]$. Taking the supremum over $[0,T]$ yields the desired result.

Multiplying \eqref{eq:RPDE} by $-\Delta\overline{v}(t)$, integrating over $\T$,  taking the real part,
and using Hölder's inequality implies
$$\frac{1}{2}\frac{d}{dt}\|\nabla v(t)\|^2+\|\Delta v(t)\|^2\leq C\| \Psi_0(v(t)+\eta(t))\|^2+\frac{1}{2}\|\Delta v(t)\|^2.$$
Since $|\Psi_0(z)|\leq C(1+|z|^3)$, we get
$$\frac{1}{2}\frac{d}{dt}\|\nabla v(t)\|^2+\frac{1}{2}\|\Delta v(t)\|^2\leq C(1+\| v(t)+\eta(t)\|_{L^6}^6).$$
Now we use the Gagliardo-Nirenberg interpolation inequality for $u\in \dot{H}^1$, see e.g.~\cite{Brezis2011},
$\|u\|_{L^6}\leq C\| u\|_{\dot{H}^1}^{\frac{1}{3}}\|u\|^{\frac{2}{3}}$,
to obtain 
$$\frac{d}{dt}\|\nabla v(t)\|^2+\|\Delta v(t)\|^2\leq C\big(1+\| v(t)+\eta(t)\|_{\dot{H}^1}^{2}\|v(t)+\eta(t)\|^{4}\big).$$
Integrating over $t$,
taking the supremum over $[0,T]$ of $(\|v(s)\|^{4}+\|\eta(s)\|^{4})$,  and applying the inequality of~\eqref{estim_H1}  yields
$$
\begin{aligned}
&\|\nabla v(t)\|^2+\int_{0}^t \|\Delta v(s)\|^2ds\leq |v(0)|_{\dot{H}^1}^2 \\
& + C_T \big(1+\sup_{s\in[0,T]}\|\eta(s)\|^{4} + \|\eta\|^{8}_{L^4((0,T)\times \mathbb T)}\big)\big(1+ \|\eta\|_{L^4((0,T)\times \mathbb T)}^4 + \|\eta\|_{L^2(0,T;\dot{H}^1)}^{2}\big).
\end{aligned}
$$
Then by the Gagliardo-Nirenberg inequality we have
$$
 \|\eta\|^{8}_{L^4((0,T)\times \mathbb T)}
 \leq C_T\int_0^T \|\eta(t)\|^6 dt \int_0^T \|\eta(t)\|^2_{\dot H^1} dt,
$$
and hence  \eqref{estim_H2}. To prove~\eqref{estim_part_t}, multiply~\eqref{eq:RPDE} with $\partial_t\overline{v}(t)$ and integrate over $\T\times [0,t]$
$$
\begin{aligned} 
\int_0^t\|\partial_sv(s)\|^2ds
\leq 2(1+ |\nu|) \int_0^t\big[\|\Delta v(s)\|^2+\|\Psi_0(v(s)+\eta(s))\|^2 \big]ds.
\end{aligned}
$$
The first term on the right hand side  can be bounded using~\eqref{estim_H2}, whereas the second term is bounded in the same way as in the derivation of the estimate~\eqref{estim_H2}. 
\end{proof}

\subsection{Existence and uniqueness results}

We show existence of a solution to the SCGLE~\eqref{eq:CGLE}, using the Galerkin method. We define the projection $P_N$ of $u \in \dot H^\beta$, for $\beta \in \mathbb R$, on the $N$-dimensional subspace 
 \begin{equation}\label{eq:Proj_def}P_Nu:=\sum_{k=k_0}^{\lfloor\frac{N}{2}\rfloor}a_k\phi_k,\end{equation}
where $k_0=-\lfloor\frac{N}{2}\rfloor$ when $N$ is odd and $k_0=-\lfloor\frac{N}{2}\rfloor+1$ if $N$ is even and $\phi_k$ are eigenfunction of $-\Delta$. 
For convenience we assume from now on that $N$ is odd.
The following  lemma gives bounds on the projection error, see~\cite[Lemma 1.94]{lord_powell_shardlow_2014}.
\begin{lemma}\label{eq:proj}
    For any $N\in\N$, $\alpha,\beta\in\R$, with $\beta\geq\alpha$, and $u\in\dot{H}^{\beta}$, there is a constant  $C>0$, such that,
    $$\|(I-P_N)u\|_{\dot{H}^{\alpha}}\leq CN^{\alpha-\beta}\|u\|_{\dot{H}^{\beta}}.$$
\end{lemma}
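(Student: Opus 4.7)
The plan is to pass to the Fourier side, since both $P_N$ and the operator $\tilde\Delta$ defining the $\dot H^\alpha$ norm are diagonal in the basis $\{\phi_k\}_{k\in\Z}$. First I would expand $u=\sum_{k\in\Z}a_k\phi_k$ with $a_k=\langle u,\phi_k\rangle$. By the definition of the projection in~\eqref{eq:Proj_def}, writing $M:=\lfloor N/2\rfloor$ we have
$$
(I-P_N)u=\sum_{|k|>M}a_k\phi_k,
$$
and by the definition of $\|\cdot\|_{\dot H^\alpha}$ together with $\tilde\Delta\phi_k=(1+\lambda_k)\phi_k$,
$$
\|(I-P_N)u\|_{\dot H^\alpha}^2=\sum_{|k|>M}(1+\lambda_k)^\alpha|a_k|^2.
$$

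The next step is to multiply and divide by $(1+\lambda_k)^\beta$ to reveal the $\dot H^\beta$ norm, writing
$$
\|(I-P_N)u\|_{\dot H^\alpha}^2=\sum_{|k|>M}(1+\lambda_k)^{\alpha-\beta}(1+\lambda_k)^\beta|a_k|^2.
$$
Since $\alpha-\beta\le 0$ and $(1+\lambda_k)$ is increasing in $|k|$, the factor $(1+\lambda_k)^{\alpha-\beta}$ is maximised over $|k|>M$ at the smallest admissible $|k|$, namely $|k|=M+1$. Using $\lambda_k=(2\pi k)^2$ from \eqref{eq:phi} and $M+1\ge N/2$, one has
$$
(1+\lambda_k)^{\alpha-\beta}\le\bigl(1+(2\pi(M+1))^2\bigr)^{\alpha-\beta}\le C\, N^{2(\alpha-\beta)}
$$
for a constant $C$ depending only on $\alpha-\beta$.

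Pulling this supremum out of the sum and bounding the remaining series by the full $\dot H^\beta$ norm gives
$$
\|(I-P_N)u\|_{\dot H^\alpha}^2\le C\,N^{2(\alpha-\beta)}\sum_{|k|>M}(1+\lambda_k)^\beta|a_k|^2\le C\,N^{2(\alpha-\beta)}\|u\|_{\dot H^\beta}^2,
$$
and taking square roots yields the claim. There is no real obstacle here; the only thing to be a little careful about is the slight asymmetry of the projection when $N$ is even versus odd (handled by the convention on $k_0$), which only affects the constant, and the trivial case $\beta=\alpha$ where the inequality reduces to $\|I-P_N\|_{\mathcal L(\dot H^\alpha)}\le C$, itself immediate from orthogonality of $\{\phi_k\}$.
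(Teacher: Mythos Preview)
Your argument is correct; this is exactly the standard Fourier-side computation one would expect. The paper does not actually prove this lemma but merely cites \cite[Lemma~1.94]{lord_powell_shardlow_2014}, so there is nothing to compare against beyond noting that your proof is the canonical one and would serve perfectly well in place of the citation.
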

The spectral Galerkin approximation for \eqref{eq:CGLE} is defined as
\begin{equation}\label{eq:spec_CGLE}
du_N = \left[A u_N+ Ru_N - (1+i\mu)P_N(u_N|u_N|^2)\right]dt+\sigma dW_N,  \quad 
u_N(0)=P_N u_0,
\end{equation}
where $W_N:=P_NW.$
Recall~\eqref{eq:A defn} and note that
$P_NAu=AP_Nu, \quad P_Ne^{tA}u=e^{tA}P_Nu.$
Then  the mild formulation for \eqref{eq:spec_CGLE} is given by
\begin{equation}\label{eq:spec_mild}
\begin{aligned}
u_N(t)&=e^{t A }P_Nu_0+\int_0^t e^{(t-s)A} \Big[Ru_N(s)+(1+i\mu)P_N(|u_N(s)|^2u_N(s))\Big]ds\\
& + \sigma \int_0^te^{(t-s)A} dW_N(s).
\end{aligned}
\end{equation}
We define the stochastic convolution, $\eta_N$, for $W_N$, by 
$$
\eta_N(t) := \sigma \int_0^te^{(t-s)A} dW_N(s).
$$
Thus, using~\eqref{def_stoch_conv}, we have 
\begin{equation*}\begin{split}\eta_N(t)&=\sigma P_N\sum_{k\in\Z}\sqrt{q_k}\left(\int_0^t e^{(t-s)A}d\beta_k^r(s)+i\int_0^t e^{(t-s)A}d\beta_k^i(s)\right)\phi_k=P_N\eta(t).\end{split}\end{equation*}

\begin{theorem}\label{Existence}
   For $r\geq0$ and $u_0\in \dot{H}^1$,  there exists a unique mild solution of \eqref{eq:RPDE}  satisfying the following estimate, for a constant $C_{T}>0$,
    $$
    \begin{aligned} 
    \sup\limits_{t\in [0,T]}\!\|v(t)\|_{\dot H^1}^2+ \|v\|^2_{L^2(0,T;\dot H^2)}\leq & C_T\Big[1+ \|u_0\|^2_{\dot{H}^1}+\sup_{t\in[0,T]}\!\|\eta(t)\|_{L^2}^{12}+ \|\eta\|^{12}_{L^2(0,T; \dot H^1)}\Big].
    \end{aligned} 
    $$
\end{theorem}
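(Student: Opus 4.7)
The plan is to construct the solution via a Galerkin approximation, use the a priori estimates from Lemma~\ref{energy} to pass to the limit, and then establish uniqueness by an energy argument exploiting the one-dimensional Sobolev embedding $\dot H^1 \hookrightarrow V$.

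First I would set $v_N := u_N - \eta_N$, where $u_N$ solves the spectral Galerkin system \eqref{eq:spec_CGLE}, so that $v_N$ satisfies the finite-dimensional ODE system
\begin{equation*}
\partial_t v_N = A v_N + P_N\bigl[R(v_N+\eta_N) - (1+i\mu)|v_N+\eta_N|^2(v_N+\eta_N)\bigr],\qquad v_N(0)=P_N u_0.
\end{equation*}
Local existence is classical for such a polynomial ODE, and global existence on $[0,T]$ follows once we have an a priori bound. Since $v_N$ is smooth in time and lies in a finite-dimensional subspace of $\dot H^2$, all hypotheses of Lemma~\ref{energy} are met after replacing $\eta$ by $\eta_N = P_N\eta$, so we obtain the uniform estimates
\begin{equation*}
\sup_{t\in[0,T]}\|v_N(t)\|_{\dot H^1}^2 + \|v_N\|^2_{L^2(0,T;\dot H^2)} + \|\partial_t v_N\|^2_{L^2((0,T)\times\mathbb T)} \leq C_T\bigl(1+\|u_0\|^2_{\dot H^1}+\|\eta\|_*^{12}\bigr),
\end{equation*}
where $\|\eta\|_*$ abbreviates the right-hand side of \eqref{estim_H2} and we have used $\|P_N \eta\|_{\dot H^\alpha}\leq \|\eta\|_{\dot H^\alpha}$ together with Lemma~\ref{eq:proj} applied to $P_N u_0$.

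Next I would pass to the limit $N\to\infty$. The uniform bounds yield weak-$\ast$ convergence (along a subsequence) of $v_N$ in $L^\infty(0,T;\dot H^1)$, weak convergence in $L^2(0,T;\dot H^2)$, and weak convergence of $\partial_t v_N$ in $L^2((0,T)\times\mathbb T)$. The Aubin--Lions lemma with $\dot H^2 \hookrightarrow \dot H^1 \hookrightarrow L^2$ (compact) gives strong convergence in $L^2(0,T;\dot H^1)$, and hence in $C([0,T];L^2)$ after refinement using $\partial_t v_N \in L^2$. Combined with the one-dimensional embedding $\dot H^1\hookrightarrow V$, this produces uniform-in-space convergence of $v_N+\eta_N$ a.e.\ in time and allows passage to the limit in the cubic term $|v_N+\eta_N|^2(v_N+\eta_N)$. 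This yields a weak solution $v$ of \eqref{eq:RPDE} with the claimed regularity, and a Duhamel computation together with the semigroup bounds from Lemma~\ref{lem:semigroup} shows $v$ is the mild solution. The estimate in the theorem is then inherited from \eqref{estim_H2} by lower semicontinuity of norms under weak(-$\ast$) convergence.

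For uniqueness, let $v_1,v_2$ be two such solutions and set $w := v_1-v_2$, so that $w$ satisfies $\partial_t w = Aw + \Psi_0(v_1+\eta)-\Psi_0(v_2+\eta)$ with $w(0)=0$. Testing with $\overline{w}$ and taking the real part, the linear dissipative term on the left is $\|\nabla w\|^2$, while the local Lipschitz bound \eqref{eq:loc_lip_L2} together with the one-dimensional embedding $\dot H^1\hookrightarrow V$ yields
\begin{equation*}
\frac{1}{2}\frac{d}{dt}\|w(t)\|^2 \leq C\bigl(1+\|v_1(t)\|_V^2+\|v_2(t)\|_V^2+\|\eta(t)\|_V^2\bigr)\|w(t)\|^2.
\end{equation*}
Since $\sup_{t\in[0,T]}\|v_i(t)\|_{\dot H^1}<\infty$ and $\eta$ is pathwise continuous with values in $\dot H^{r+1}\hookrightarrow V$ by Lemma~\ref{stoch_app}, Grönwall's inequality gives $w\equiv 0$.

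The main obstacle will be the treatment of the cubic term when passing to the limit: since $\Psi_0$ is only locally Lipschitz, identifying the weak limit requires genuine strong convergence of $v_N$ (at least in $L^\infty((0,T)\times\mathbb T)$), which is why the $\dot H^2$ bound and the one-dimensional Sobolev embedding are essential. The complex-valued nature is handled uniformly by working with $\langle\cdot,\cdot\rangle_{\mathbb R}$ throughout, as in Lemma~\ref{energy}.
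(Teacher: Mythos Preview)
Your proposal is correct and follows essentially the same strategy as the paper: Galerkin approximation, the a~priori estimates of Lemma~\ref{energy}, compactness via Aubin--Lions, identification of the limit in the nonlinear term using strong convergence and the embedding $\dot H^1\hookrightarrow V$, and uniqueness by the energy argument with \eqref{eq:loc_lip_L2} and Gr\"onwall. The only cosmetic difference is that the paper verifies the mild formulation of the limit directly (splitting off the projection error $(I-P_N)\Psi_0(v+\eta)$ explicitly), whereas you pass through the weak formulation first and then invoke Duhamel; both routes are equivalent at this regularity.
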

\begin{proof}
Consider a sequence of the spectral Galerkin approximations of the solution of~\eqref{eq:RPDE},  similar to~\eqref{eq:spec_CGLE}. The local Lipschitz continuity of $\Psi_0$ ensures the existence and uniqueness of solutions for the resulting system of random ODEs. The  estimates in Lemma~\ref{energy}, can be shown to hold for the spectral Galerkin approximations, which give uniform bounds for $v_N$  in $ L^{\infty}(0,T;\dot{H}^1)\cap L^{2}(0,T;\dot{H}^2)$ and for $\partial_t v_N$  in $L^2(0,T;L^2)$.
Hence, upto a subsequence not relabeled, $v_N \rightharpoonup v$   weakly in $L^{2}(0,T;\dot{H}^2)$  and $\partial_t v_{N} $ converges weakly in $L^2(0,T;L^2)$, see e.g.~\cite{Evans2010PartialDE}. 
The Aubin-Lions lemma, see e.g.~\cite{Aubin,Serrano2013}, implies strong convergence  $v_N \to v$ in $L^2(0,T;\dot{H}^{1})$.
To show that $v$ is a mild solution of~\eqref{eq:RPDE} we use that $v_N$ is a mild solution to the spectral Galerkin approximation  and  consider 
\begin{equation*}
\begin{aligned} 
&\lim_{N\to\infty}\Big\|v_{N} -e^{tA}u_0-\int_0^t e^{(t-s)A}\Psi_0(v(s)+\eta(s))ds\Big\|\\&=\lim_{N\to\infty}\Big\|e^{tA} P_{N} u_0 -e^{tA} u_0
+ \int_0^t \!\! e^{(t-s)A}\big(P_{N}\Psi_0(v_{N}(s)+\eta_{N}(s)) - \Psi_0(v(s)+\eta(s))\big)ds\Big\|\\
&\leq\lim_{N\to\infty}\big\| (I-P_{ N}) u_0\big\| + \lim_{N\to\infty}\int_0^t \big\|P_{N}\Psi_0(v_{N}(s)+\eta_{N}(s))-\Psi_0(v(s)+\eta(s))\big\|ds =: I + II.
    \end{aligned}
\end{equation*}
Using that $u_0\in \dot{H}^1$ and Lemma~\ref{eq:proj} yields
$$
0 \leq I \leq \lim_{N\to\infty}N^{-1}\| u_0\|_{\dot{H}^{1}} =0.
$$
The second term can be estimated as 
\begin{equation*}
    \begin{split}
  II 
&\leq\lim_{N\to\infty}\int_0^t \big\|P_{N}\Psi_0(v_{N}(s)+\eta_{N}(s))-P_{N}\Psi_0(v(s)+\eta(s))\big\| ds\\
&+\lim_{N\to\infty}\int_0^t \big \|(I-P_{N})\Psi_0(v(s)+\eta(s)) \big\|ds =: J + \tilde J.
    \end{split}
\end{equation*}
Since $v+\eta \in C([0,T];\dot{H}^1)$ we can apply Lemma~\ref{eq:proj}  to conclude  
$\tilde J = 0$.
Using inequality~\eqref{eq:loc_lip_L2} and  Lemma~\ref{stoch_app}   yields
\begin{equation*}
    \begin{split}
J & \leq\!\! \lim_{N\to\infty}\!\!\int_0^t\!\!\! \!\big[1+\|v_N(s)+\eta_{N}(s)\|^2_V+\|v(s)+\eta(s)\|^2_V\big]\|v_N(s)+\eta_{N}(s)-v(s)-\eta(s)\|  ds \\
&\leq\lim_{N\to\infty}C_T\int_0^t \big(\|v_N(s)-v(s)\|+\|\eta_{N}(s)-\eta(s)\|\big)\, ds.
    \end{split}
\end{equation*}
The strong convergence in $L^{2}(0,T; L^2)$ implies  $\|v_N-v\|_{L^2((0,T)\times \T)} \to 0$ as $N\to \infty$, for $t\in [0,T]$. Using that $\eta\in L^{\infty}(0,T; \dot{H}^1)$ and applying  Lemma~\ref{eq:proj} yields  $\|\eta_{N}(t)-\eta(t)\| \to 0$  as $N \to \infty$.  Thus we have that $J = 0$ and hence $II=0$. Together with $I=0$ we obtain that $v$ is a mild solution of~\eqref{eq:RPDE}.

To show  uniqueness, we assume there exist two solutions, $v_1$ and $v_2$. Then testing the equation for the difference $v_1-v_2$
with $\overline v_1-\overline v_2$ and integrating over $\T$ yield
\begin{equation*}
    \begin{aligned}
    &  \frac 12 \frac{d}{dt}\|v_1(t)-v_2(t)\|^2+\|\nabla (v_1(t)-v_2(t))\|^2\\
& \quad \leq \|\Psi_0(v_1(t)+\eta(t))-\Psi_0(v_2(t)+\eta(t))\|\|v_1(t)-v_2(t)\|\leq C\|v_1(t)-v_2(t)\|^2,
    \end{aligned}
\end{equation*}
where we used \eqref{eq:loc_lip_L2} and   $v_1, v_2, \eta \in C([0,T],V)$. Then Grönwall's inequality ensures
$$\|v_1(t)-v_2(t)\|^2\leq e^{Ct}\|v_1(0)-v_2(0)\|^2=0,$$
 which implies that $v_1 = v_2$ a.e. in $(0,T)\times \T$ and a.s. in $\Omega$.
\end{proof}
Now we prove higher regularity properties for $v$.
\begin{lemma}\label{mom_u}
For $t\in[0,T]$ and $u_0\in{\dot H}^\alpha$, with $\alpha\in[1,2)$, the mild solution of \eqref{eq:RPDE} satisfies 
    $$\|v(t)\|_{\dot{H}^{\alpha}}\leq C_T\big(1+ \|u_0\|_{{\dot H}^\alpha} +\|u_0\|^3_{{\dot H}^1} + \sup_{s\in[0,T]}\|\eta(s)\|^{10}_{\dot H^1}\big), $$
  where $C_T$ is a positive constant.
\end{lemma}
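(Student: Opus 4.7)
The plan is to work directly with the mild formulation of \eqref{eq:RPDE}, namely
$$
v(t) = e^{tA} u_0 + \int_0^t e^{(t-s)A}\, \Psi_0(v(s)+\eta(s))\, ds,
$$
take the $\dot H^\alpha$ norm, and exploit the smoothing of the semigroup against an $L^2$ bound on the non-linearity. For the linear part we use that $e^{tA}$ is a contraction on $\dot H^\alpha$, so $\|e^{tA}u_0\|_{\dot H^\alpha} \leq \|u_0\|_{\dot H^\alpha}$. For the Duhamel integral we apply Lemma \ref{lem:semigroup}(i) (with $\beta=0$) to get
$$
\bigl\|e^{(t-s)A}\Psi_0(v(s)+\eta(s))\bigr\|_{\dot H^\alpha} \leq C_T\, (t-s)^{-\alpha/2}\, \|\Psi_0(v(s)+\eta(s))\|.
$$
Because $\alpha < 2$, the time integral $\int_0^t (t-s)^{-\alpha/2}\, ds$ is uniformly bounded on $[0,T]$, so everything reduces to estimating $\|\Psi_0(v+\eta)\|_{L^2}$ uniformly in time.

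Next, I would bound the non-linearity using the definition \eqref{eq:Psi_0}:
$$
\|\Psi_0(w)\| \leq R\|w\| + C\|w\|_{L^6}^3,
$$
and then invoke the 1D Gagliardo-Nirenberg inequality $\|w\|_{L^6}^3 \leq C\, \|w\|_{\dot H^1}\, \|w\|^2$, giving
$$
\|\Psi_0(v+\eta)\| \leq C\bigl(1 + \|v\|_{\dot H^1} + \|\eta\|_{\dot H^1}\bigr)\bigl(1 + \|v\|^2 + \|\eta\|^2\bigr).
$$
Now I plug in the a priori bounds already at hand: Theorem \ref{Existence} together with $\|\eta\|_{L^2(0,T;\dot H^1)} \leq \sqrt{T}\sup_s\|\eta(s)\|_{\dot H^1}$ and $\|\eta\| \leq \|\eta\|_{\dot H^1}$ yields
$$
\sup_{t\in[0,T]} \|v(t)\|_{\dot H^1} \leq C_T\bigl(1 + \|u_0\|_{\dot H^1} + \sup_{s\in[0,T]}\|\eta(s)\|_{\dot H^1}^{6}\bigr),
$$
while \eqref{estim_H1}, combined with the 1D Sobolev embedding $\dot H^1 \hookrightarrow L^\infty$ to estimate $\|\eta\|_{L^4((0,T)\times\T)}^4 \leq C_T \sup\|\eta\|_{\dot H^1}^4$, gives
$$
\sup_{t\in[0,T]}\|v(t)\|^2 \leq C_T\bigl(1 + \|u_0\|^2 + \sup_{s\in[0,T]}\|\eta(s)\|_{\dot H^1}^{4}\bigr).
$$

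Multiplying the two factors and using $\|u_0\| \leq \|u_0\|_{\dot H^1}$, the worst cross terms are $\|u_0\|_{\dot H^1}\cdot \|u_0\|_{\dot H^1}^2 = \|u_0\|_{\dot H^1}^3$ and $\sup\|\eta\|_{\dot H^1}^{6}\cdot \sup\|\eta\|_{\dot H^1}^{4} = \sup\|\eta\|_{\dot H^1}^{10}$; all remaining mixed terms are absorbed by Young's inequality into these two, yielding
$$
\sup_{s\in[0,T]}\|\Psi_0(v(s)+\eta(s))\| \leq C_T\bigl(1 + \|u_0\|_{\dot H^1}^3 + \sup_{s\in[0,T]}\|\eta(s)\|_{\dot H^1}^{10}\bigr),
$$
and pulling this through the Duhamel step produces exactly the claimed estimate.

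The only delicate point is matching the exponents $3$ and $10$ in the statement: this forces me to be careful not to over-use the $\dot H^1$ bound (which already costs six powers of $\|\eta\|_{\dot H^1}$) and instead keep the second factor in the non-linearity estimate at the $L^2$ level via Gagliardo-Nirenberg. The 1D embedding $\dot H^1 \hookrightarrow L^\infty$ used to control $\|\eta\|_{L^4((0,T)\times\T)}$ is essential here, and is where the restriction to the one-dimensional torus enters the bookkeeping.
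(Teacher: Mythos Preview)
Your approach is essentially identical to the paper's: mild formulation, semigroup smoothing via Lemma~\ref{lem:semigroup} with $\beta=0$, the cubic bound $\|\Psi_0(w)\|\le C(1+\|w\|_{L^6}^3)$, Gagliardo--Nirenberg $\|w\|_{L^6}^3\le C\|w\|^2\|w\|_{\dot H^1}$, and then the a~priori bounds from \eqref{estim_H1} and Theorem~\ref{Existence}. The paper's proof is in fact terser than yours---it does none of the exponent bookkeeping you carry out.

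One caveat on the bookkeeping you do add: the claim that ``all remaining mixed terms are absorbed by Young's inequality'' does not hold for the cross term $\|u_0\|_{\dot H^1}^2\,\sup_s\|\eta(s)\|_{\dot H^1}^6$. Along $\|u_0\|_{\dot H^1}=b^3$, $\sup\|\eta\|_{\dot H^1}=b$, this term is $b^{12}$ while $1+\|u_0\|_{\dot H^1}^3+\sup\|\eta\|_{\dot H^1}^{10}\sim b^{10}$, so no Young splitting with exponents $3$ and $10$ can work. This is not actually a problem in the paper's framework: the Remark at the start of Section~2 declares that the constants $C$ are allowed to depend on the initial condition $u_0$, so $\|u_0\|_{\dot H^1}^2\sup\|\eta\|_{\dot H^1}^6\le C\sup\|\eta\|_{\dot H^1}^6\le C(1+\sup\|\eta\|_{\dot H^1}^{10})$ is legitimate. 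Under that convention the explicit $u_0$-terms in the statement are partly cosmetic, and your argument goes through; but if you intended a bound with $C_T$ independent of $u_0$, that specific cross term needs either a higher power of $\|u_0\|_{\dot H^1}$ or a higher power of $\sup\|\eta\|_{\dot H^1}$ on the right-hand side.
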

\begin{proof}
Using the formula for the mild solution yields  
\begin{equation*}
    \begin{split}
        \|v(t)\|_{\dot{H}^{\alpha}}
    &\leq \|u_0\|_{\dot{H}^{\alpha}}+\int_0^t   \|\tilde{\Delta}^{\frac{\alpha}{2}}e^{(t-s)A}\|_{\mathcal{L}(L^2)}\|\Psi_0(v(s)+\eta(s))\|ds\\
    &\leq  \|u_0\|_{\dot{H}^{\alpha}}+C_T\int_0^t (t-s)^{-\frac{\alpha}{2}}\big(1+\|v(s)\|_{L^6}^3+\|\eta(s)\|^3_{L^6}\big)ds,
    \end{split}
\end{equation*}
where we used the estimates for the semigroup in Lemma~\ref{lem:semigroup} and that  $|\Psi_0(w)|^2\leq C(|w|^2 + |w|^6)$. Considering $\|w\|_{L^6}^3 \leq \|w\|_{L^2}^2 \|w\|_{\dot H^1}$ and applying the estimates of Theorem~\ref{Existence} yield the result.
\end{proof}
Using the regularity estimates for $v$ and for the stochastic convolution, we obtain the existence and uniqueness results and the moment bounds for solutions of~\eqref{eq:CGLE}.
\begin{theorem}\label{mom_us}
For $W(t)$ which is $\dot{H}^r-$valued, with $r\geq0$, and $u_0\in\dot{H}^\alpha$, with  $\alpha\in[1,r+1]\cap[1,2)$,  there exists a unique mild solution $u$ of \eqref{eq:CGLE} and for all $p\in[2,\infty)$ we have for a $C_{T,p}>0$,
   $$\mathbb E\big[\sup_{t\in[0,T]}\|u(t)\|_{\dot{H}^{\alpha}}^p\big]\leq C_{T,p}.$$
\end{theorem}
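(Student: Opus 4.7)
The plan is to exploit the decomposition $u(t) = v(t) + \eta(t)$, where $v$ is the pathwise solution of the random PDE \eqref{eq:RPDE} provided by Theorem~\ref{Existence} and $\eta$ is the stochastic convolution. Essentially all the technical work has been done: Theorem~\ref{Existence} gives pathwise existence and uniqueness of $v$ in $L^\infty(0,T;\dot H^1)\cap L^2(0,T;\dot H^2)$, Lemma~\ref{mom_u} gives the pathwise $\dot H^\alpha$ bound for $v$, and Lemma~\ref{stoch_app} gives $L^p(\Omega)$ moment bounds for $\sup_{t\in[0,T]}\|\eta(t)\|_{\dot H^{r+1}}$ for every $p\in[2,\infty)$.

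First, for existence and uniqueness: outside a null set, $\eta \in L^\infty(0,T;\dot H^{r+1})$ by Lemma~\ref{stoch_app}, so Theorem~\ref{Existence} applies pathwise and yields a unique $v \in L^\infty(0,T;\dot H^1)\cap L^2(0,T;\dot H^2)$ solving \eqref{eq:RPDE} with initial data $u_0 \in \dot H^\alpha \subset \dot H^1$. Setting $u := v + \eta$ and inserting into the mild formulation gives a mild solution of \eqref{eq:CGLE}. Conversely, if $u_1, u_2$ are two mild solutions of \eqref{eq:CGLE}, then $v_i := u_i - \eta$ both solve \eqref{eq:RPDE}, and the pathwise uniqueness in Theorem~\ref{Existence} forces $u_1 = u_2$ a.s.

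For the moment bound, I would use the triangle inequality
$$\|u(t)\|_{\dot H^\alpha} \leq \|v(t)\|_{\dot H^\alpha} + \|\eta(t)\|_{\dot H^\alpha}.$$
Since $\alpha \leq r+1$ and $1 \leq r+1$, the spectral definition of the fractional norms gives the continuous embeddings $\dot H^{r+1} \hookrightarrow \dot H^\alpha$ and $\dot H^{r+1} \hookrightarrow \dot H^1$, so $\|\eta(t)\|_{\dot H^\alpha} \leq \|\eta(t)\|_{\dot H^{r+1}}$ and $\|\eta(t)\|_{\dot H^1} \leq \|\eta(t)\|_{\dot H^{r+1}}$. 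Applying Lemma~\ref{mom_u} pathwise then yields
$$\sup_{t\in[0,T]}\|u(t)\|_{\dot H^\alpha} \leq C_T\Big(1 + \|u_0\|_{\dot H^\alpha} + \|u_0\|_{\dot H^1}^3 + \sup_{s\in[0,T]}\|\eta(s)\|_{\dot H^{r+1}}^{10} + \sup_{t\in[0,T]}\|\eta(t)\|_{\dot H^{r+1}}\Big).$$
Raising to the $p$-th power, taking expectations, and invoking Lemma~\ref{stoch_app} with exponents $10p$ and $p$ bounds the right-hand side by a finite constant $C_{T,p}$, which is the claimed estimate.

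The main obstacle is essentially notational rather than conceptual: the theorem is a distillation of the previous pathwise and moment estimates. The only point that needs checking is that the polynomial dependence on $\eta$ coming out of Lemma~\ref{mom_u} (here the tenth power) remains integrable after raising to the $p$-th power, which is guaranteed because Lemma~\ref{stoch_app} provides moments of all orders in $[2,\infty)$.
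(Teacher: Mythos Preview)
Your proposal is correct and follows exactly the approach the paper takes: define $u:=v+\eta$, get existence and uniqueness from the pathwise results for $v$ in Theorem~\ref{Existence}, and obtain the moment bound by combining the pathwise $\dot H^\alpha$ estimate of Lemma~\ref{mom_u} with the moment bound for $\eta$ in Lemma~\ref{stoch_app}. The paper's own proof is just two sentences saying precisely this; you have simply spelled out the details.
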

\begin{proof}
Note that $u:=v+\eta$ is a solution of \eqref{eq:CGLE} and due to the uniqueness of $v$ and $\eta$, we have the uniqueness of $u$. The moment bound is obtained by using Lemma~\ref{mom_u} to estimate $u$ a.s. in terms of the stochastic convolution and then use the moment bound for $\eta$ proven in Lemma~\ref{stoch_app}.
\end{proof}
Following similar calculations, we obtain moment bounds for the spectral Galerkin approximation.
\begin{theorem}\label{mom_u_Ns}
For $W(t)$ which is $\dot{H}^r-$valued, with $r\geq 0$, and $u_0\in\dot{H}^\alpha$, with $\alpha\in[1,r+1]\cap[1,2)$,  there exists a unique mild solution $u_N$  for~\eqref{eq:spec_CGLE}, for all $N\in\N$. Further for all $N\in\N$ and  for all $p\in[2,\infty)$ there is a $C_{T,p}>0$ such that 
   $$\E \big[\sup_{t\in[0,T]}\|u_N(t)\|_{\dot{H}^{\alpha}}^p\big]\leq C_{T,p}.$$
\end{theorem}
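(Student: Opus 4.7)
The plan is to repeat the strategy used for Theorems~\ref{Existence} and~\ref{mom_us} directly at the level of the Galerkin system, exploiting the fact that every bound derived in Lemma~\ref{energy} and Lemma~\ref{mom_u} depends only on quantities that are controlled uniformly in $N$. First I would observe that the projected equation~\eqref{eq:spec_CGLE} is a finite-dimensional SDE (a system for the $2\lfloor N/2\rfloor + 1$ Fourier modes) with drift that is smooth and locally Lipschitz and with an additive, finite-dimensional Brownian noise. Standard SDE theory therefore yields a unique local (maximal) strong solution $u_N$ up to an a.s.\ explosion time $\tau_N$.

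To show $\tau_N = \infty$ and to obtain moment bounds independent of $N$, I would set $v_N := u_N - \eta_N$, noting that $\eta_N = P_N\eta$ satisfies $\|\eta_N\|_{\dot H^\beta} \leq \|\eta\|_{\dot H^\beta}$ for every $\beta$. Pathwise, $v_N$ solves
\begin{equation*}
\partial_t v_N = A v_N + P_N\Psi_0(v_N + \eta_N),\qquad v_N(0) = P_N u_0,
\end{equation*}
which is exactly the Galerkin analogue of~\eqref{eq:RPDE}. I would then redo the energy estimates of Lemma~\ref{energy} on $v_N$: test with $\overline{v_N}$ (respectively $-\Delta \overline{v_N}$), integrate over $\T$, take the real part. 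The two key algebraic facts are that $P_N$ is self-adjoint on $L^2$ and leaves the subspace of $v_N$ invariant, so $\langle P_N\Psi_0(v_N+\eta_N),\overline{v_N}\rangle = \langle \Psi_0(v_N+\eta_N),\overline{v_N}\rangle$ and likewise with $-\Delta\overline{v_N}$; and $P_N$ commutes with $A$ and $\tilde\Delta$. Therefore the inequality from Lemma~\ref{lipy} and the Gagliardo-Nirenberg step go through verbatim, producing the pathwise bounds~\eqref{estim_H1}--\eqref{estim_H2} for $v_N$ with the same $C_T$ and with $\eta$ replaced by $\eta_N$, hence bounded by the $\eta$-norms. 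Because the right-hand side is finite a.s., we get $\tau_N = +\infty$ a.s.\ and $v_N \in L^\infty(0,T;\dot H^1)\cap L^2(0,T;\dot H^2)$ pathwise.

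To upgrade from $\dot H^1$ to $\dot H^\alpha$ with $\alpha \in [1,r+1]\cap[1,2)$, I would apply the argument of Lemma~\ref{mom_u} to the mild formulation
\begin{equation*}
v_N(t) = e^{tA}P_N u_0 + \int_0^t e^{(t-s)A} P_N \Psi_0(v_N(s)+\eta_N(s))\, ds.
\end{equation*}
Using Lemma~\ref{lem:semigroup}(ii) (together with $\|P_N\|_{\mathcal{L}(\dot H^\beta)}\leq 1$ and $\|e^{tA}P_N u_0\|_{\dot H^\alpha}\leq \|u_0\|_{\dot H^\alpha}$) and the cubic bound $|\Psi_0(w)|\leq C(1+|w|^3)$, together with the interpolation $\|w\|_{L^6}^3 \leq C\|w\|^2\|w\|_{\dot H^1}$, gives a pathwise estimate of $\|v_N(t)\|_{\dot H^\alpha}$ in terms of $\|u_0\|_{\dot H^\alpha}$ and polynomial expressions of $\sup_{s\in[0,T]}\|\eta(s)\|_{\dot H^1}$ (using that $\|\eta_N\|_{\dot H^1}\leq \|\eta\|_{\dot H^1}$), with constants independent of $N$. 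Writing $u_N = v_N + \eta_N$ and taking $p$-th moments, the deterministic part is bounded by a polynomial in these noise norms; Lemma~\ref{stoch_app} supplies arbitrary finite moments of $\sup_{t\in[0,T]}\|\eta(t)\|_{\dot H^{r+1}}$, which controls $\|\eta\|_{\dot H^1}$ since $r\geq 0$. The moment bound then follows by Hölder in $\Omega$.

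The main technical obstacle is keeping every constant independent of $N$ through the $\dot H^1$ and $\dot H^2$ energy steps. Once one confirms (as above) that the projections may be moved onto the test functions or absorbed by the operator norm $\|P_N\|\leq 1$, and that Lemma~\ref{lipy} applies to $P_N\Psi_0(v_N + \eta_N)$ paired with $\overline{v_N}$ without any boundary term, the rest of the argument is exactly the proofs of Theorems~\ref{Existence} and~\ref{mom_us} with $v,\eta,u_0$ replaced by $v_N,\eta_N,P_N u_0$.
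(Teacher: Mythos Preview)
Your proposal is correct and follows precisely the approach the paper indicates: the paper's own proof is simply the remark that the argument of Theorem~\ref{mom_us} (via Lemmas~\ref{energy} and~\ref{mom_u}) goes through verbatim for $u_N$, and you have spelled out exactly how, including the two key structural points (that $P_N$ may be passed to the test function $v_N\in P_NL^2$ so Lemma~\ref{lipy} applies, and that $\|\eta_N\|_{\dot H^\beta}\le\|\eta\|_{\dot H^\beta}$ keeps all constants $N$-independent). There is nothing to add.
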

\begin{proof}
The proof of this theorem follows  the same lines as the proof as Theorem~\ref{mom_us} and for $u_N$ we have the same estimates as in Lemmas~\ref{energy}, and~\ref{mom_u}.
\end{proof}

\section{Numerical discretization}\label{sec_Num}
\subsection{Numerical method}
The full-discretized numerical scheme, considered in this section, is a spectral Galerkin approximation in space 
and a splitting method for the time discretization. The spectral Galerkin approximation is defined in \eqref{eq:spec_CGLE}. For the time discretization we consider the first-order Lie-Trotter splitting, where we split the lower-order terms from the diffusion and stochastic term, as proposed by \cite{brehier_analysis_2019} for the Allen-Cahn equation. We take a uniform time discretization of the interval $[0,T]$ with step $\dt$ and let $t_m=m\dt$, for $m=1,\ldots,M$ where $M=T/\dt$.

In the formulation of the splitting method we use  the flow function, $\Phi_{\dt}$, of the complex ODE 
$$
z'=Rz-(1+i\mu)|z|^2z, 
$$ 
which is defined as
\begin{equation}\label{eq:Phi}
\begin{aligned}
   & \Phi_{\dt}(z):=\sqrt{\frac{R}{|z|^2-e^{-2R\dt}(|z|^2-R)}}\exp\left[-i\frac{\mu}{2}\ln\left[1+|z|^2\frac{e^{2R\dt}-1}{R}\right]\right]z, &&  \dt >0,\\
   & \Phi_0(z):=z, &&  \dt =0.
    \end{aligned} 
\end{equation}

The splitting method is determined by the following steps: first for each $x \in \mathbb T$ solve the nonlinear complex ODE
\begin{equation}\label{eq:phi_PDE}
    \begin{cases}
        d\tilde{u}_{\dt}(x,t)=R\tilde{u}_{\dt}(x,t)-(1+i\mu)|\tilde{u}_{\dt}(x,t)|^2\tilde{u}_{\dt}(x,t)&  \text{ in } \; \T\times(t_m,t_{m+1}],\\
        \tilde{u}_{\dt}(x,t_m)=U^m_{\dt}, 
    \end{cases}
\end{equation}
followed by the linear SPDE 
\begin{equation}\label{OU_eq}
    \begin{cases}
        d\hat{u}_{\dt}(x,t)=(1+i\nu)\Delta\hat{u}_{\dt}(x,t) dt+\sigma dW(t)& \; \; \text{ in } \; \T\times (t_m,t_{m+1}], \\
        \hat{u}_{\dt}(x,t_m)= \tilde{u}_{\dt}(x,t_{m+1}).
    \end{cases}
\end{equation}
The update for the next time-step is then determined from  
$$U^{m+1}_{\dt}=\hat{u}_{\dt}(t_{m+1}),$$
where $U^m_\dt(x)\approx u(x,t_m).$
For the full discretization $U^m_{N,\dt}$ we project onto the $N$ dimensional subspace.
Since \eqref{OU_eq} is a linear SPDE for which  the explicit solution is known, using the flow function $\Phi_{\dt}$ in~\eqref{eq:Phi}, a time step of the fully-discretized problem is given by
\begin{equation}\label{eq:Full}
U^{m+1}_{N,\dt}=e^{\dt A}P_N\Phi_{\dt}(U_{N,\dt}^m)+\sigma\int_{t_m}^{t_{m+1}}e^{(t_{m+1}-s)A}dW_N(s).
\end{equation}
The splitting method in \eqref{eq:Full}  can be written as
$$U^{m+1}_{N,\dt}=e^{\dt A}U_{N,\dt}^m+\dt  \, e^{\dt A}\frac{P_N\Phi_{\dt}(U_{N,\dt}^m)-U_{N,\dt}^m}{\dt}+\sigma\int_{t_m}^{t_{m+1}}e^{(t_{m+1}-s)A}dW_N(s), $$
which, similar to \cite{brehier_analysis_2019} for the Allen-Cahn equation,  can be interpreted as the accelerated Euler method of the auxiliary SPDE 
\begin{equation}\label{eq:auxeq}
du_{N,\dt}=\big((1+i\nu)\Delta u_{N,\dt}+P_N \Psi_{\dt}(u_{N,\dt})\big)dt+\sigma dW_N,    
\end{equation}
where,
\begin{equation}\label{eq:psi}
\begin{aligned}
  & \Psi_{\Delta t}(z):=\frac{\Phi_{\dt}(z)-z}{\dt}\\
  & =\frac{1}{\Delta t}\bigg[\frac{e^{R\dt}}{\sqrt{|z|^2\alpha(\dt)+1}}\exp\left({-i\frac{\mu}{2}\ln\Big[|z|^2\frac{e^{2R\Delta t}-1}{R}+1\Big]}\right)-1\bigg]z,  &&  \dt>0, \\
 &\Psi_{0}(z):=  Rz-(1+i\mu)|z|^2z, &&  \dt=0,
\end{aligned}
\end{equation}
where $\alpha(\dt):= (e^{2R\Delta t}-1)/R$.
To prove error estimates for the numerical scheme we need the well-posedness results and moment bounds for solutions of both the continuous and discrete problems. Theorems~\ref{mom_us} and~\ref{mom_u_Ns} and   Lemma~\ref{stoch_app} imply the well-posedness and moment bounds for $u$, $u_N$, and $\eta$ in the case of $\dot{H}^r-$valued noise with $r\geq 0$. 
We  need to prove moment bounds for $u_{N,\dt}$ and $U^{m}_{N,\dt}$, see Section~\ref{sec:momentbounds}.

First, however, we prove two preliminary lemmas that give estimates for $\Psi_{\dt}$ and $\Phi_{\dt}$, which  are required  for the proof of the  moment bounds for $u_{N,\dt}$ and $U^{m}_{N,\dt}$ and the derivation of the error estimates.
\begin{lemma}\label{cubic}
For all $\dt \in  [0,1)$, the following inequalities hold 
$$
\begin{aligned} 
(i) \quad  & |\Phi_{\dt}(z)|\leq  e^{R\dt}|z|, \qquad 
\qquad (iii)  \quad  \langle \Psi_{\dt}(z), z\rangle_\R  \leq C |z|^2, \\
(ii) \quad & |\Psi_{\dt}(z)|\leq C(1+|z|^3),  \qquad  (iv) \quad \langle \Psi_{\dt}(z_1+z_2), z_1\rangle_{\R}  \leq C(1+|z_1|^2+|z_2|^4), 
\end{aligned} 
$$  
for all $z, z_1,z_2\in\C$,    where the constant $C>0$ is independent of $\dt$.
\end{lemma}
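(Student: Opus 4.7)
The plan is to reduce (ii), (iii), and (iv) to statements about $\Psi_0$ along the exact flow by using the identity
\begin{equation*}
\Psi_{\dt}(z) \;=\; \frac{1}{\dt}\int_0^{\dt}\Psi_0(\Phi_t(z))\,dt,
\end{equation*}
which holds because $\Phi_t$ is the exact flow of $z' = \Psi_0(z)$, so that $\Phi_{\dt}(z) - z = \int_0^{\dt}\tfrac{d}{dt}\Phi_t(z)\,dt = \int_0^{\dt}\Psi_0(\Phi_t(z))\,dt$. Inequality (i) I would prove directly from the explicit formula \eqref{eq:Phi}: squaring gives $|\Phi_{\dt}(z)|^2 = |z|^2 e^{2R\dt}/(1+|z|^2\alpha(\dt))$, and since $|z|^2\alpha(\dt) \geq 0$, the bound $|\Phi_{\dt}(z)| \leq e^{R\dt}|z|$ follows at once.

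Inequality (ii) is then an immediate corollary of the integral identity: combining it with (i) and the pointwise cubic bound $|\Psi_0(w)| \leq C(1+|w|^3)$ yields $|\Psi_{\dt}(z)| \leq \dt^{-1}\int_0^{\dt}C(1+e^{3Rt}|z|^3)\,dt \leq C(1+|z|^3)$. For (iii), I would compute $\langle \Phi_{\dt}(z), z\rangle_\R = F\cos\theta\,|z|^2 \leq e^{R\dt}|z|^2$ directly from \eqref{eq:Phi}, where $F \leq e^{R\dt}$ by (i), and then
\begin{equation*}
\langle \Psi_{\dt}(z), z\rangle_\R \;=\; \frac{\langle \Phi_{\dt}(z), z\rangle_\R - |z|^2}{\dt} \;\leq\; \frac{e^{R\dt}-1}{\dt}\,|z|^2 \;\leq\; C|z|^2,
\end{equation*}
using that $(e^{R\dt}-1)/\dt$ is uniformly bounded on $\dt \in [0,1)$.

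The main obstacle is (iv). I would split by the magnitude ratio of $z_1$ and $z_2$, mirroring the case analysis already used in the proof of Lemma~\ref{lipy}. When $|z_2| \geq |z_1|/(1+|\mu|)$, the desired bound follows from Cauchy--Schwarz combined with (ii): the inequality $|z_1| \leq C|z_2|$ collapses mixed terms such as $|z_1||z_2|^3$ into $C(1+|z_2|^4)$. When $|z_2| < |z_1|/(1+|\mu|)$, I would apply the integral identity and then invoke Lemma~\ref{lipy} on the integrand $\langle \Psi_0(\Phi_t(z_1+z_2)), z_1\rangle_\R$, with the choice $u = z_1$ and $v = \Phi_t(z_1+z_2) - z_1$. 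The delicate part is that the sign analysis from Case~2 of Lemma~\ref{lipy}---where the cubic contribution $\langle (1+i\mu)|w|^2 w, z_1\rangle_\R$ is non-negative and is absorbed as a favourable sign rather than estimated by Cauchy--Schwarz---must be propagated through the time integral along the flow, using (i) and the explicit phase/modulus structure of $\Phi_t$ in \eqref{eq:Phi} to ensure that the deviation $|\Phi_t(z_1+z_2) - z_1|$ does not introduce spurious higher powers of $|z_1|$.
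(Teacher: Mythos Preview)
Your treatment of (i)--(iii) matches the paper's proof essentially line for line: the explicit modulus formula for (i), the averaging identity $\Psi_{\dt}(z)=\dt^{-1}\int_0^{\dt}\Psi_0(\Phi_t(z))\,dt$ for (ii), and the difference quotient $(e^{R\dt}-1)/\dt$ for (iii).

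The gap is in (iv). Invoking Lemma~\ref{lipy} on the integrand with $u=z_1$ and $v=\Phi_t(z_1+z_2)-z_1$ yields a bound of the form $C(1+|z_1|^2+|\Phi_t(z_1+z_2)-z_1|^4)$, and you must then control the last term by $C(1+|z_1|^2+|z_2|^4)$. This is \emph{not} automatic: the flow $\Phi_t$ rotates its argument by an angle $-\tfrac{\mu}{2}\ln(1+\alpha(t)|z_1+z_2|^2)$, which for large $|z_1|$ can be of order one even for small $t$, so that $|\Phi_t(z_1+z_2)-z_1|$ is of order $|z_1|$ rather than $|z_2|$. On that set of times, your bound collapses to $C|z_1|^4$, which is too large. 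You flag this as ``the delicate part'' but give no mechanism to resolve it; the appeal to ``the explicit phase/modulus structure of $\Phi_t$'' is not a proof.

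The paper's route around this obstacle is: first reduce by rotation to $z_1>0$ real, so that $\langle\,\cdot\,,z_1\rangle_\R=z_1\,\Re(\cdot)$; then split the time interval according to the sign of $\Re(\Psi_s(z_1+z_2))$. Where this sign is negative the contribution can be discarded. Where it is non-negative one has $\Re(\Phi_s(z_1+z_2))\ge\Re(z_1+z_2)$, and combined with $|\Phi_s(z_1+z_2)|\le|z_1+z_2|$ (valid since $|z_1+z_2|\ge\sqrt{R}$ in this case) a direct computation shows $\Phi_s(z_1+z_2)\in B(z_1,|z_2|)$; in particular $|\Im\Phi_s(z_1+z_2)|\le|z_2|$, and one bounds $\langle\Psi_0(\Phi_s(z_1+z_2)),z_1\rangle_\R$ directly by $R(z_1+|z_2|)z_1$. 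This sign--geometry argument is the missing idea in your proposal.
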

\begin{proof}
For $(i)$, from the explicit formula for $\Phi_{\dt}$ in \eqref{eq:Phi} we have
\begin{equation}\label{eq:linear_phi}
    |\Phi_{\dt}(z)|\leq \bigg|\frac{e^{R\dt}}{\sqrt{\alpha(\dt) |z|^2 +1}} e^{ - i \frac \mu 2 \ln(1 + \alpha(\dt)|z|^2)}\bigg| |z | \leq  e^{R\dt}|z|, 
\end{equation}
where $\alpha(\dt) = (e^{2R\dt}-1)/R\geq 0$. The inequalities $(ii)$-$(iv)$  for $\dt=0$ can be easily checked and  we focus  on $\dt>0$. The function $\Psi_{\dt}(z)$ can be rewritten as
$$\Psi_{\dt}(z)=\frac{\int_0^{\dt}\frac{d}{ds}\Phi_{s}(z)ds}{\dt}=\frac{\int_0^{\dt}\Psi_{0}(\Phi_{s}(z))ds}{\dt}.$$
Hence, using~\eqref{eq:linear_phi} for $\Phi_{\dt}$  and cubic non-linearity of $\Psi_0$ we obtain $(ii)$, i.e.,
$$
|\Psi_{\dt}(z)|\leq \sup_{s\in[0,\dt]}|\Psi_{0}(\Phi_s(z))|\leq C  \Big(1+\sup_{s\in[0,\dt]}|\Phi_s(z)|^3\Big)\leq C \big(1+|z|^3\big).
$$
To show the third estimate $(iii)$ we consider 
 $$\langle  \Psi_{\dt}(z), z\rangle_\R 
 = \frac{1}{\dt}\big(\langle \Phi_{\dt}(z), z\rangle_\R-|z|^2\big)\leq \frac{1}{\dt}(e^{R\dt} -1)|z|^2 \leq C |z|^2,$$
 where we used that $0 \leq (e^{R\dt} -1)\leq C\dt$ for  $\dt \in [0,1)$. 

The proof of $(iv)$ is divided into three cases. 
For $|z_1|^2\leq 4R$ we have 
$$\langle \Psi_{\dt}(z_1+z_2), z_1\rangle_{\R}\leq |\Psi_{\dt}(z_1+z_2)||z_1|\leq C(1+|z_1|^3+|z_2|^3)\leq C(1+|z_2|^3).
$$
 For $|z_1|^2>4R$ and $|z_2|\geq\frac{1}{2}\min\{1,\frac{1}{\mu}\}|z_1|$ we have
$$\langle \Psi_{\dt}(z_1+z_2), z_1\rangle_{\R}\leq |\Psi_{\dt}(z_1+z_2)||z_1|\leq C(1+|z_1|^3+|z_2|^3)|z_2|\leq C(1+|z_2|^4).$$
The last case is $|z_1|^2>4 R$ and $|z_2|<\frac{1}{2}\min\{1,\frac{1}{\mu}\}|z_1|$. Note that $|z_1+z_2|\geq \sqrt{R}$.
For $z_1, z_2 \in \mathbb C$ we can write $z_1 = \tilde z_1 e^{i\omega}$ and $z_2 = \tilde z_2 e^{i\omega}$, with $\tilde z_1 >0$ and $\tilde z_2 \in \mathbb C$.  Then 
$$\langle \Psi_{\dt}(z_1+z_2), z_1\rangle_{\R} =\langle e^{i\omega}\Psi_{\dt}(\tilde z_1+\tilde z_2), e^{i\omega} \tilde z_1\rangle_{\R} = \langle \Psi_{\dt}(\tilde z_1+\tilde z_2),  \tilde z_1\rangle_{\R}
$$
and, without loss of generality, we can assume  that $z_1>0$. 
From~\eqref{eq:psi} we have
\begin{equation}\label{eq:psi_rep}
\Psi_{s}(z_1+z_2) 
=\frac{\int_0^{s} \frac{d}{d\tau}\Phi_{\tau}(z_1+z_2) d\tau}{s}=\frac{\int_0^{s} \Psi_0\left(\Phi_{\tau}(z_1+z_2)\right)d\tau}{s}
\end{equation}
and consider the two cases $\Re(\Psi_{s}(z_1+z_2))>0$ and   $\Re(\Psi_{s}(z_1+z_2)) <0$, with
$$\mathcal{C}:=\{s\in[0,\dt], z_1>2\sqrt{R}, |z_2|< \frac 1{2} \min\{1, \frac 1\mu\} z_1 :\Re\left(\Psi_{s}(z_1+z_2)\right)\geq 0\} $$ 
 and 
$$\int_{[0,\dt]\cap \mathcal{C}} \Re\left(\Psi_0\left(\Phi_{s}(z_1+z_2)\right)\right)ds\geq 0,
\quad \quad \int_{[0,\dt]\cap \mathcal{C}^c} \Re\left(\Psi_0\left(\Phi_{s}(z_1+z_2)\right)\right)ds < 0.$$
 Thus, it is sufficient to find an upper bound for $\langle\Psi_0(\Phi_s(z_1+z_2)),z_1\rangle_{\R}$ for $(s,z_1, z_2)\in\mathcal{C}$.
 The first equality in~\eqref{eq:psi_rep} implies that the real part of $\Psi_{s}(z_1+z_2)$ is positive only when $\Re(\Phi_{s}(z_1+z_2)) \geq \Re(z_1 + z_2)$. This, together with  $|\Phi_s(z_1,z_2)|\leq |z_1+z_2|$ for $|z_1+z_2|\geq \sqrt{R}$, implies 
 \begin{align*}
   |\Phi_s(z_1,z_2)-z_1|^2 
   &= z_1^2-2z_1\Re(\Phi_s(z_1,z_2))+\Re(\Phi_s(z_1,z_2))^2+\Im(\Phi_s(z_1,z_2))^2 \\
   &\leq z_1^2-2z_1(z_1+\Re(z_2))+|z_1+z_2|^2\\
   &= -z_1^2-2z_1\Re(z_2)+z_1^2+2z_1\Re(z_2)+\Re(z_2)^2+\Im(z_2)^2=|z_2|^2.
 \end{align*}
Hence, $\Phi_s(z_1,z_2)\in B_{z_1}(|z_2|)$  and  $|\Im(\Phi_s(z_1+z_2)) |\leq |z_2|$ for  $(s, z_1, z_2 )\in\mathcal{C}$.
 Thus
$$
\begin{aligned}
\langle\Psi_0(\Phi_s(z_1+z_2)),z_1\rangle_{\R}
&\leq R(z_1 + |z_2|) z_1 - 
z_1^2|\Phi_s(z_1+z_2)|^2  + |z_2| z_1 |\Phi_s(z_1+z_2)|^2 \\
&+ \mu  \Im (\Phi_s(z_1+z_2)) z_1|\Phi_s(z_1+z_2)|^2 
\leq R(z_1 + |z_2|) z_1,
\end{aligned}
$$
where we used that $|z_2|+\mu|z_2|\leq  z_1$. 
\end{proof}

\begin{lemma}\label{Psi_loc_lip}
 For all $\dt\in [0,1)$
 we have the following estimates
    \begin{equation} \label{Psi_propert}
    \begin{aligned}
\text{ (i) } &  |\Psi_{\dt}(z_1)-\Psi_{\dt}(z_2)|\leq  C\big(1+|z_1|^2 + |z_2|^2 \big)|z_1-z_2| \quad \text{ for } z_1,z_2\in\C, \\
\text{ (ii) } & |\Psi_{\dt}(z)-\Psi_0(z)|\leq  C \dt  e^{R}\big[|z|+  e^{4R} |z|^5\big]
    \quad \text{ for } z \in \mathbb C,
    \end{aligned}
    \end{equation}
    where the constant $C>0$ is independent of $\dt$.
\end{lemma}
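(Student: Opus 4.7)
For (ii), the plan is to start from the representation $\Phi_\dt(z) - z = \int_0^\dt \Psi_0(\Phi_s(z))\,ds$ already used in the proof of Lemma~\ref{cubic}, which gives
\[
\Psi_\dt(z) - \Psi_0(z) = \frac{1}{\dt}\int_0^\dt \big[\Psi_0(\Phi_s(z)) - \Psi_0(z)\big]\,ds.
\]
Integrating $|\Psi_0(w)| \le C(|w| + |w|^3)$ with $w = \Phi_\tau(z)$ and using Lemma~\ref{cubic}(i) yields $|\Phi_s(z) - z| \le Cse^R(|z| + e^{2R}|z|^3)$ for $s \in [0,\dt)$. Combining with the pointwise local Lipschitz estimate $|\Psi_0(a) - \Psi_0(b)| \le C(1 + |a|^2 + |b|^2)|a - b|$ (immediate from $||a|^2 a - |b|^2 b| \le \tfrac{3}{2}(|a|^2 + |b|^2)|a - b|$), applied at $a = \Phi_s(z)$, $b = z$ with $|\Phi_s(z)| \le e^R|z|$, bounds the integrand by $Cse^R(|z| + e^{4R}|z|^5)$ once the intermediate term $|z|^3$ is absorbed via $|z|^3 \le |z| + |z|^5$. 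Integrating in $s$ and dividing by $\dt$ then gives~(ii).

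For (i), the naive approaches via Grönwall on $|\Phi_s(z_1) - \Phi_s(z_2)|$ or via the variational equation along the flow produce constants that grow exponentially in $|z_1|^2 + |z_2|^2$, while combining such Lipschitz estimates for $\Phi_s$ with the local Lipschitz of $\Psi_0$ yields a polynomial factor of degree $4$ rather than $2$. The plan is instead to exploit the explicit scalar structure $\Psi_\dt(z) = G_\dt(|z|^2)\,z$ visible from~\eqref{eq:psi}, where
\[
G_\dt(r) := \frac{1}{\dt}\bigg[\frac{e^{R\dt}}{\sqrt{1+\alpha(\dt)\,r}}\,e^{-i\mu \ln(1+\alpha(\dt)\,r)/2} - 1\bigg] \quad (\dt > 0),
\]
and $G_0(r) := R - (1+i\mu)r$. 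Two uniform-in-$\dt$ bounds on $G_\dt$ are required: $|G_\dt(0)| = (e^{R\dt}-1)/\dt \le C$, and by direct differentiation
\[
|G_\dt'(r)| = \frac{\sqrt{1+\mu^2}}{2}\,\frac{e^{R\dt}\,\alpha(\dt)}{\dt\,(1+\alpha(\dt)\,r)^{3/2}} \le C,
\]
both resting on the fact that $\alpha(\dt)/\dt = (e^{2R\dt}-1)/(R\dt)$ stays bounded on $[0,1)$. Together they give $|G_\dt(r)| \le C(1+r)$. Then decomposing
\[
\Psi_\dt(z_1) - \Psi_\dt(z_2) = G_\dt(|z_1|^2)(z_1 - z_2) + \big(G_\dt(|z_1|^2) - G_\dt(|z_2|^2)\big)z_2,
\]
estimating the two pieces via the bounds above together with $||z_1|^2 - |z_2|^2| \le (|z_1|+|z_2|)|z_1 - z_2|$, and using Young's inequality to convert the cross term $|z_1||z_2|$ into $|z_1|^2 + |z_2|^2$, completes~(i).

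The hardest point is the $1/\dt$ singularity in the definition of $G_\dt$: its cancellation is invisible from the flow-based Grönwall approach and becomes explicit only through the direct scalar computation of $G_\dt(0)$ and $G_\dt'(r)$, where the boundedness of $\alpha(\dt)/\dt$ on $[0,1)$ is precisely what justifies the restriction $\dt \in [0,1)$ in the statement.
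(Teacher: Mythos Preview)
Your proof is correct. For part~(i) your argument is essentially the same as the paper's: both decompose $\Psi_\dt(z_1)-\Psi_\dt(z_2)$ into the term $G_\dt(|z_1|^2)(z_1-z_2)$ and the term $\big(G_\dt(|z_1|^2)-G_\dt(|z_2|^2)\big)z_2$, and both exploit that the $1/\dt$ is cancelled by $\alpha(\dt)/\dt$ being bounded on $[0,1)$. The only difference is cosmetic: you name the scalar function $G_\dt$ and compute $G_\dt'$ in one stroke, whereas the paper writes out the two terms directly and handles the square-root and the complex exponential in $G_\dt(|z_1|^2)-G_\dt(|z_2|^2)$ by separate Lipschitz estimates.

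For part~(ii) your route is genuinely different from the paper's. The paper writes $\Psi_\dt(z)-\Psi_0(z)=\frac{z}{\dt}\big(g_z(\dt)-g_z(0)-\dt\,g_z'(0)\big)$ with $g_z(t)=e^{Rt}(1+\alpha(t)|z|^2)^{-1/2}e^{-i\mu\ln(1+\alpha(t)|z|^2)/2}$ and applies a second-order Taylor expansion, computing $g_z''$ explicitly and bounding it on $[0,1)$; this is how the specific constants $e^R$ and $e^{4R}$ in the statement arise. Your argument instead stays at the level of the flow representation $\Psi_\dt(z)=\frac{1}{\dt}\int_0^\dt\Psi_0(\Phi_s(z))\,ds$, bounds $|\Phi_s(z)-z|$ by $Cs(|z|+|z|^3)$ via Lemma~\ref{cubic}(i), and feeds this into the pointwise local Lipschitz estimate for $\Psi_0$. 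Your approach avoids the explicit (and somewhat lengthy) computation of $g_z''$ and would carry over to nonlinearities whose flow has no closed form; the paper's Taylor computation, on the other hand, reads off the precise $e^{kR}$ dependence directly.
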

\begin{proof}
$(i)$ In the case of $\dt=0$, one can use \eqref{eq:loc_lip_L2}. Using the formula \eqref{eq:psi} for $\dt>0$ yields 
$$
\begin{aligned}
 \Psi_{\Delta t}(z_1)-\Psi_{\Delta t}(z_2)
&= \frac{z_1-z_2}{\Delta t}\Big[\sqrt{\frac{R\alpha(\dt)+1}{\alpha(\dt) |z_1|^2+1}}\exp\Big({-i\frac{\mu}{2}\ln\left[\alpha(\dt)|z_1|^2+1\right]}\Big) - 1\Big] \\
& \qquad + \frac{z_2}{\Delta t}\left[
\sqrt{\frac{R\alpha(\dt)+1}{\alpha(\dt) |z_1|^2+1}}\exp\left({-i\frac{\mu}{2}\ln\big[\alpha(\dt)|z_1|^2+1\big]}\right) \right.\\
& \qquad - \left. \sqrt{\frac{R\alpha(\dt)+1}{\alpha(\dt) |z_2|^2+1}}\exp\left({-i\frac{\mu}{2}\ln\big[\alpha(\dt)|z_2|^2+1\big]}\right) \right]
=: I + II,
\end{aligned}
$$
For $I$, using the intermediate value theorem and  $0<\alpha(\dt) \leq C \dt$, yields   
$$\left|\sqrt{\frac{R\alpha(\dt)+1}{\alpha(\dt) |z_1|^2+1}}\exp\left({-i\frac{\mu}{2}\ln\big[\alpha(\dt) |z_1|^2+1\big]}\right) - 1\right| \leq C \dt(1+ |z_1|^2).$$
For the second term $II$, using the Lipschitz continuity of $x^{-1/2}$ for $x\geq 1$ we obtain 
$$
\begin{aligned} 
\Big|\exp\left({-i\frac{\mu}{2}\ln\big[\alpha(\dt)|z_1|^2+1\big]}\right)\Big|\left|\sqrt{\frac{R\alpha(\dt)+1}{\alpha(\dt) |z_2|^2+1}}- 
\sqrt{\frac{R\alpha(\dt)+1}{\alpha(\dt) |z_1|^2+1}}\right|
\\
\leq \frac 12 e^{R\dt} \alpha(\dt) \big||z_2|^2 -  |z_1|^2\big|
\leq C e^{R\dt} \dt \big(|z_1|+ |z_2|\big)|z_1 - z_2|
\end{aligned} 
$$
and 
$$
\begin{aligned} 
\sqrt{\frac{R\alpha(\dt)+1}{\alpha(\dt) |z_2|^2+1}}
\Bigg|\exp\left({-i\frac{\mu}{2}\ln\big[\alpha(\dt)|z_2|^2+1\big]}\right) - \exp\left({-i\frac{\mu}{2}\ln\big[\alpha(\dt)|z_1|^2+1\big]}\right)\Bigg|
\\
\leq \frac \mu 2 e^{R\dt} \alpha(\dt) \big||z_2|^2 -  |z_1|^2\big|
\leq C e^{R\dt} \dt \big(|z_1|+ |z_2|\big)|z_1 - z_2|. 
\end{aligned}
$$
Combining the estimates from above implies the result stated in the lemma. 

$(ii)$ The proof  of the second estimate  is based on a proof for a similar results for the Allen-Cahn equation in~\cite{brehier_analysis_2019}.
 Using the formulas for   $\Psi_{\dt}$  and $\Psi_0$ we can write 
    \begin{equation*}
    \begin{split}
      \Psi_{\dt}(z)-\Psi_0(z)&=\frac{z}{\Delta t}\left[\frac{e^{R\Delta t}}{\sqrt{\alpha(\dt)|z|^2+1}}\exp\left({-i\frac{\mu}{2}\ln\left[\alpha(\dt)|z|^2+1\right]}\right)-1\right] \\
      & \quad-Rz+(1+i\mu)|z|^2z =\frac{z}{\dt}\big(g_z(\dt)-g_z(0)-\dt g_z'(0)\big),
    \end{split}
    \end{equation*}
    with 
    $$g_z(t)=\frac{e^{R t}}{\sqrt{\alpha(t)|z|^2+1}}\exp\left({-i\frac{\mu}{2}\ln\left[\alpha(t)|z|^2+1\right]}\right) \;  \text{ and }  \; g'_z(0)=R-(1+i\mu)|z|^2, 
    $$
    where $\alpha(t) = (e^{2Rt}-1)/R$. 
    The first and second derivatives of $g_z(t)$, for all $t\geq 0$, are given by 
    \begin{equation*}
    \begin{split}
      g'_z(t)
         &=g_z(t)\left(R-\frac{|z|^2 e^{2Rt}}{\alpha(t)|z|^2+1}-i\mu\frac{|z|^2e^{2R t}}{\alpha(t)|z|^2 +1}\right) =g_z(t)\left(R-\frac{(1+ i \mu) |z|^2 e^{2Rt}}{\alpha(t)|z|^2+1}\right) , \\
            g_z''(t)&=g_z(t)\Big[\Big[R-(1+ i \mu)\frac{ |z|^2 e^{2Rt}}{\alpha(t)|z|^2+1}\Big]^2 \\
           & \qquad + (1+ i \mu)\frac{2Re^{2Rt}|z|^2\big[|z|^2(1-(1-1/R)e^{2Rt}) -1\big] }{(\alpha(t) |z|^2+ 1)^2}\Big].
        \end{split}
    \end{equation*}
  We have that  for all $\dt\in[0,1)$,
    $$
    \begin{aligned} 
    |g_z(\dt)|&\leq  e^{R}, \qquad 
    |g_z'(\dt)|\leq  e^{R}\big[R + (1+ |\mu|)e^{2R} |z|^2\big], \\
    |g_z''(\dt)| &\leq 2e^{R}\Big[R^2 + \Big(\big[(1+|\mu|)^2 + 1+|\mu|\big] e^{4R} +  R(1+|\mu|)e^{2R}\Big) |z|^4\Big] .
    \end{aligned} 
    $$
    Applying Taylor's expansion to $g_z(t)$ around zero  yields the estimates.
\end{proof}

\subsection{Moment bounds} \label{sec:momentbounds}
Next we prove the moment bounds for $u_{N,\dt}$ and $U^{m}_{N,\dt}$. 
\begin{lemma}\label{mom_aux}
Let $W(t)$ be $\dot{H}^r-$valued, with $r\geq 0$,  and $u_0\in\dot{H}^\alpha$, where $\alpha\in[1,r+1]\cap[1,2)$.
Then there exists a unique mild solution $u_{N,\dt}$ of the auxiliary SPDE~\eqref{eq:auxeq}  such that 
   $$\sup_{t\in[0,T]}\|u_{N,\dt}(t)\|_{L^p(\Omega,\dot{H}^{\alpha}(\T))}\leq C_{T,p},$$
  for all  $p\in[2,\infty)$ and  the constant $C_{T,p}>0$.
\end{lemma}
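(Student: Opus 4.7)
The plan is to mirror the argument used for the SCGLE in Theorems~\ref{mom_us} and~\ref{mom_u_Ns}, replacing the non-linearity $\Psi_0$ by $\Psi_{\dt}$, and to exploit the fact that the structural bounds on $\Psi_\dt$ proven in Lemmas~\ref{cubic} and~\ref{Psi_loc_lip} are \emph{uniform in} $\dt \in [0,1)$. The pathwise decomposition $u_{N,\dt}(t) = v_{N,\dt}(t) + \eta_N(t)$, where $\eta_N = P_N \eta$ is the Galerkin-projected stochastic convolution, reduces the SPDE to a random PDE for $v_{N,\dt}$,
\begin{equation*}
\partial_t v_{N,\dt} = (1+i\nu)\Delta v_{N,\dt} + P_N \Psi_{\dt}(v_{N,\dt}+\eta_N), \qquad v_{N,\dt}(0) = P_N u_0.
\end{equation*}
Since $v_{N,\dt}$ lives in a finite-dimensional subspace and $P_N\Psi_{\dt}$ is locally Lipschitz by Lemma~\ref{Psi_loc_lip}(i), standard ODE theory produces a unique local solution; global existence will follow from the a priori estimates below.

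Next I would reproduce the analogue of Lemma~\ref{energy}. Testing the equation against $\overline{v_{N,\dt}}$, taking the real part, and applying Lemma~\ref{cubic}(iv) in place of Lemma~\ref{lipy} yields, with $C$ independent of $\dt$ and $N$,
\begin{equation*}
\tfrac{1}{2}\tfrac{d}{dt}\|v_{N,\dt}\|^2+\|\nabla v_{N,\dt}\|^2 \leq C\big(1+\|v_{N,\dt}\|^2+\|\eta_N\|_{L^4}^4\big),
\end{equation*}
so Grönwall gives the $L^\infty(0,T;L^2)\cap L^2(0,T;\dot H^1)$ bound. Testing with $-\Delta\overline{v_{N,\dt}}$ and using the cubic bound $|\Psi_\dt(z)|\leq C(1+|z|^3)$ from Lemma~\ref{cubic}(ii) together with the Gagliardo--Nirenberg inequality reproduces the $L^\infty(0,T;\dot H^1)\cap L^2(0,T;\dot H^2)$ estimate, as in the proof of Theorem~\ref{Existence}.

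Finally, I would bound $\|v_{N,\dt}(t)\|_{\dot H^\alpha}$ pathwise from the mild formulation using the semigroup estimate of Lemma~\ref{lem:semigroup}(ii) and the cubic growth of $\Psi_\dt$, exactly as in Lemma~\ref{mom_u}. Taking the $p$-th moment and combining with the moment bound for $\eta_N = P_N \eta$ inherited from Lemma~\ref{stoch_app} via $\|P_N w\|_{\dot H^{r+1}}\leq \|w\|_{\dot H^{r+1}}$ produces $u_{N,\dt} = v_{N,\dt} + \eta_N \in L^p(\Omega; L^\infty(0,T; \dot H^\alpha))$ with the stated uniform bound.

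The main obstacle is ensuring that every constant is genuinely independent of $\dt$; otherwise the moment bound would degenerate as $\dt\downarrow 0$ and be useless for the subsequent strong-convergence proof. This is precisely the purpose for which Lemmas~\ref{cubic} and~\ref{Psi_loc_lip} were formulated: in particular Lemma~\ref{cubic}(iv) plays the role of Lemma~\ref{lipy} with a constant independent of $\dt$, which is what makes the entire energy chain valid uniformly in the time step.
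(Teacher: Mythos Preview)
Your proposal is correct and follows exactly the approach indicated in the paper: the paper's proof is a three-sentence sketch stating that one repeats the arguments of Lemmas~\ref{energy} and~\ref{Existence} (and implicitly Lemma~\ref{mom_u} and Theorem~\ref{mom_us}) with $\Psi_{\dt}$ in place of $\Psi_0$, using Lemmas~\ref{cubic} and~\ref{Psi_loc_lip} to supply the required local Lipschitz continuity, cubic growth, and one-sided bound uniformly in $\dt$. Your write-up makes these substitutions explicit and correctly identifies Lemma~\ref{cubic}(iv) as the replacement for Lemma~\ref{lipy}, which is precisely the point.
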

\begin{proof} The proof follows the same step as the proof of the corresponding results for the SCGLE~\eqref{eq:CGLE} in   Lemmas~\ref{Existence}, and~\ref{energy}. 
Notice that to show the well-posedness results and derive the a priori estimates we used the local  Lipschitz continuity and cubic nonlinearity of  $\Psi_0$. 
The same properties are shown for~$\Psi_{\dt}$, see Lemmas~\ref{cubic} and~\ref{Psi_loc_lip}. 
\end{proof}
\begin{lemma}\label{full_disc_moment_b}
 Let $W(t)$ be $\dot{H}^r-$valued with $r\geq 0$ and $u_0\in\dot{H}^\alpha$, with $\alpha\in[1,r+1]\cap[1,2)$. Assume $|\nu| \leq \sqrt{3}$ and $N^{-1}\leq C \dt^{1/2+\varsigma}$. Then for some $\varsigma >0$, there is a constant $C_T>0$ such that  
\begin{equation}
\sup_{0\leq m\leq M}\E\big[ \|U_{N,\dt}^m\|^2\big]+ \E\big[\sup_{0\leq m\leq M} \|U_{N,\dt}^m\|^4_{L^4}\big] \leq C_T.
\end{equation} 
\end{lemma}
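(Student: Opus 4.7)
The approach is to bound each part of the scheme $U^{m+1}_{N,\dt}=V^m+\eta^m$, where $V^m:=e^{\dt A} P_N\Phi_\dt(U^m_{N,\dt})$ is $\mathcal F_{t_m}$-measurable and $\eta^m:=\sigma\int_{t_m}^{t_{m+1}}e^{(t_{m+1}-s)A}\,dW_N(s)$ is a centered complex Gaussian increment independent of $\mathcal F_{t_m}$. Standard estimates from the It\^o isometry and Lemma~\ref{lem:semigroup} yield $\E\|\eta^m\|^2\leq C\dt$, and combining the pointwise variance bound $\E|\eta^m(x)|^2\leq C\dt$ with the Gaussian fourth-moment identity $\E|\eta^m(x)|^4=2(\E|\eta^m(x)|^2)^2$ gives $\E\|\eta^m\|_{L^4}^4\leq C\dt^2$.

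For the $L^2$ bound, square in $L^2$ and take expectations; the cross term $\E\Re\ip{V^m,\eta^m}$ vanishes by the tower property and $\E[\eta^m\mid\mathcal F_{t_m}]=0$. Combining $\|e^{\dt A}P_N\|_{\mathcal L(L^2)}\leq 1$ with the pointwise estimate $|\Phi_\dt(z)|\leq e^{R\dt}|z|$ from Lemma~\ref{cubic}(i) gives the recursion $\E\|U^{m+1}_{N,\dt}\|^2\leq e^{2R\dt}\E\|U^m_{N,\dt}\|^2 + C\dt$, to which discrete Grönwall applies.

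For the $L^4$ bound with the supremum inside, expand $|V^m+\eta^m|^4$ pointwise, integrate over $\T$, and condition on $\mathcal F_{t_m}$. Odd $\eta^m$-moments vanish by the circular symmetry of the complex Gaussian, while $\E\big[(\Re\ip{V^m(x),\eta^m(x)})^2\,\big|\,\mathcal F_{t_m}\big]\leq C\dt|V^m(x)|^2$, leading to
\[
\E\big[\|U^{m+1}_{N,\dt}\|_{L^4}^4\,\big|\,\mathcal F_{t_m}\big]\leq \|V^m\|_{L^4}^4 + C\dt\,\|V^m\|^2 + C\dt^2
\]
up to a conditional-mean-zero martingale increment. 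The next step is to dominate $\|V^m\|_{L^4}^4$ by $(1+C\dt)\|U^m_{N,\dt}\|_{L^4}^4 + C\dt$, using three ingredients: (i) the assumption $|\nu|\leq\sqrt 3$ is exactly the $L^p$-dissipativity threshold $(p-2)^2\nu^2\leq 4(p-1)$ at $p=4$, so $e^{\dt A}$ is an $L^4$-contraction; (ii) the pointwise flow dissipation, since $y(t):=|\Phi_t(z)|^2$ solves the logistic ODE $y'=2Ry-2y^2$, yielding $|\Phi_\dt(z)|^4\leq |z|^4 + C\dt(1+|z|^2)$; and (iii) the coupling $N^{-1}\leq C\dt^{1/2+\varsigma}$ together with Bernstein's inverse inequality on $P_NL^2$, which absorbs the $P_N$ projection defect in $L^4$ into the Grönwall constant rather than letting it compound over $M=T/\dt$ iterations. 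Discrete Grönwall then treats the predictable part, while Doob's maximal inequality applied to the martingale part (whose quadratic variation is controlled by the $L^2$ bound just established) moves the supremum inside the expectation.

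The decisive step is (iii). Without the coupling $N^{-1}\leq C\dt^{1/2+\varsigma}$, the multiplicative factor $\|P_N\|_{\mathcal L(L^4)}>1$ would compound to $\|P_N\|^M$ and blow up as $\dt\to 0$; without $|\nu|\leq\sqrt 3$, the semigroup $e^{\dt A}$ itself would fail to be $L^4$-contractive and an analogous instability would arise. Both conditions therefore enter in an essential way, and the proof reduces to verifying that together they are exactly what is needed to keep the one-step $L^4$ amplification factor at $1+O(\dt)$.
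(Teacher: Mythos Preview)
Your $L^2$ argument and the identification of the roles of $|\nu|\le\sqrt 3$ (the $L^4$-dissipativity threshold) and of the coupling $N^{-1}\le C\dt^{1/2+\varsigma}$ are correct and match the paper. For the $L^4$ bound you take a different route: the paper works pathwise throughout, bounding every cross term of $\|V^m+\eta^m\|_{L^4}^4$ by Young's inequality in the form $C\dt\|V^m\|_{L^4}^4+C\dt^{-1}\|\eta^m\|_{L^4}^4$, iterating the resulting one-step recursion, applying discrete Gr\"onwall \emph{before} any expectation, and only at the end invoking $\E\|\eta^m\|_{L^4}^4\le C\dt^2$ to bound $\E\big[\sup_m\sum_{k<m}\dt^{-1}\|\eta^k\|_{L^4}^4\big]\le CT$. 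Your conditioning/martingale decomposition is a reasonable alternative, but two steps do not close as written.

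First, the Doob step. The dominant martingale increment is $4\int_\T|V^m|^2\Re(V^m\bar\eta^m)\,dx$, and its conditional second moment is of order $\dt\,\|V^m\|_{L^3}^6\le\dt\,\|V^m\|_{L^2}^2\|V^m\|_{L^4}^4$, which is \emph{not} controlled by the $L^2$ bound alone---it already contains the $L^4$ quantity you are trying to estimate, so the argument is circular. The usual repair is to use BDG with exponent $1$, bound $(\sum_k M_k^2)^{1/2}\le C\sup_k\|V^k\|_{L^4}^3(\sum_k\|\eta^k\|_{L^4}^2)^{1/2}$, and then Young with a small parameter to produce $\epsilon\,\E\sup_m\|U^m\|_{L^4}^4+C_\epsilon$ and absorb on the left; but this is not what you wrote. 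Second, ``Bernstein's inverse inequality'' does not directly handle the $P_N$ defect, because $\Phi_\dt(U^m_{N,\dt})$ is a nonlinear function of $U^m_{N,\dt}$ and is therefore not band-limited. The paper's mechanism is specific: split $P_N=I+(P_N-I)$, use the $L^4$-contractivity of $e^{\dt A}$ on the first piece, and on the second combine the parabolic smoothing $\|e^{\dt A}w\|_{L^4}\le C\dt^{-1/8+\beta/2}\|w\|_{\dot H^\beta}$ with the projection-error bound $\|(P_N-I)\Phi_\dt(U^m)\|_{\dot H^\beta}\le CN^{\beta}\|U^m\|_{L^2}$ for $\beta<0$. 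Taking $|\beta|$ large and invoking the coupling $N^{-1}\le C\dt^{1/2+\varsigma}$ makes this contribution $O(\dt)$, which is exactly what is needed for the one-step amplification to be $1+O(\dt)$.
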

\begin{proof}
Considering the $L^2$-norm of $U^{m+1}_{N,\dt}$ and using equation \eqref{eq:Full} yields 
\begin{equation}\label{eq_121}
\begin{aligned}
 \|U^{m+1}_{N,\dt}\|^2
  &\leq \big\|\Phi_{\dt}(U^{m}_{N,\dt})\big\|^2 +\Big\langle e^{\dt A} P_N\Phi_{\dt}(U^{m}_{N,\dt} ), \sigma \int_{t_m}^{t_{m+1}}\!\!\! \!\!\! e^{ (t_{m+1}-s)A}  dW_N(s)\Big \rangle
 \\
&\quad   + \Big\langle e^{\dt A} P_N\Phi_{\dt}(U^m_{N, \dt} ), \sigma \int_{t_m}^{t_{m+1}} \!\!\!\!\!\! e^{ (t_{m+1}-s)A} dW_N(s) \Big\rangle  + \sigma^2\Big\|  \int_{t_m}^{t_{m+1}}\!\!\!\!\!\! e^{(t_{m+1}-s)A}  dW_N(s) \Big\|^2 ,
 \end{aligned}
\end{equation}
where we used  the contraction property of the semigroup $e^{tA}$ generated by $A =(1+i\nu)\Delta $.
 Applying \eqref{eq:linear_phi} for $\Phi_{\dt}$, taking the expectation of \eqref{eq_121}, and using the independence of $\Phi_{\dt}(U^{m}_{N,\dt} )$ and the stochastic integral,  implies
 \begin{equation}\label{eq_122}
\begin{aligned}
 \E \|U^{m+1}_{N,\dt}\|^2 \leq  e^{2 \dt R}\E \big\|U^{m}_{N,\dt}\big\|^2 + \sigma^2 \mathbb E\Big\|  \int_{t_m}^{t_{m+1}}\!\!\!\! e^{ (t_{m+1}-s)A}  dW_N(s) \Big\|^2.
 \end{aligned}
\end{equation}
The Burkholder–Davis–Gundy inequality and the properties of the semigroup ensure 
\begin{equation} \label{estim_conv_23}
\begin{aligned}
&\mathbb E \Big[\Big\| \int_{t_m}^{t_{m+1}}\!\!\!\! e^{(t_{m+1}-\tau)A}  dW(\tau)\Big\|^p_{L^p} \Big]
\quad \leq
C_p \Big(\int_{t_m}^{t_{m+1}}\!\!\!\! \big\| e^{(t_{m+1}-\tau)A}  B \tilde \Delta^{-\frac{r}{2}-\frac{1}{4}-\frac{\varepsilon}{4}}\big\|^2_{\mathcal L(L^2)} d\tau  \Big)^{\frac p  2}
\leq C_p \dt^{\frac p 2},
\end{aligned}
\end{equation} 
for $p \geq 2$. 
 Using \eqref{estim_conv_23} with $p=2$ in \eqref{eq_122} yields
 \begin{equation} \label{estim_12}
 \begin{aligned}
  \mathbb E\big[\|U^{m+1}_{N,\dt}\|^2\big] \leq e^{2R\dt}\mathbb E(\|U^{m}_{N,\dt}\|^2 ) + C \dt.
  \end{aligned}
 \end{equation}
 Iterating \eqref{estim_12} over $m$ implies the estimate for the second moment   $\mathbb E\big[\|U^{m}_{N,\dt}\|^2\big]$ for all $m$ from $1$ to $M$.
 Considering the $L^4$-norm  of \eqref{eq:Full} yields
\begin{equation*} \label{estim_212}
\begin{aligned}
  \big\|U^{m+1}_{N,\dt}\big\|^4_{L^4}
 &= \big\| e^{\dt A}  P_N \Phi_{\dt}(U^{m}_{N,\dt})\big\|^4_{L^4}  +   2 \sigma^2\Big\langle \big|e^{\dt A} P_N \Phi_{\dt}(U^{m}_{N,\dt})\big|^2,
 \big| \mathcal J_{W_N}^m\big|^2 \Big\rangle \\
 & \quad +  2 \sigma\Big\langle \big|e^{\dt A} P_N \Phi_{\dt}(U^{m}_{N,\dt})\big|^2 e^{\dt A} P_N \Phi_{\dt}(U^m_{N, \dt}) ,
  \mathcal J_{W_N}^m \Big\rangle
 \\
 & \quad +  \sigma^3 \Big \langle e^{\dt A} P_N \Phi_{\dt}(U^{m}_{N,\dt}),
 \big |\mathcal J_{W_N}^m\big|^2   \mathcal J_{W_N}^m \Big\rangle
 +  \sigma^3 \Big \langle e^{\dt A} P_N \Phi_{\dt}(U^{m}_{N,\dt}),
 \big |\mathcal J_{W_N}^m \big|^2  \mathcal J_{W_N}^m \Big\rangle
\\
& \quad  +
 \sigma^4 \big\|  \mathcal J_{W_N}^m\big\|^4_{L^4} = I_1 + I_2 +I_3 +I_4 + I_5 + I_6 + I_7,
\end{aligned}
\end{equation*}
where 
$$
\mathcal J_{W_N}^m = \int_{t_m}^{t_{m+1}} \!\!\!\! e^{(t_{m+1}-s) A}  dW_N(s).
$$
Notice that for $|\nu| \leq 2\sqrt{p-1}/(p-2)$ and $p>2$, the following estimate holds
$$
\| e^{t A} w\|_{L^p} \leq \|w\|_{L^p} \qquad \text{ for }  w \in L^p(\T). 
$$
This follows from the fact that for such $p$  and $\nu$ we have
$$
|{\rm Im} \langle \Delta u, |u|^{p-2} u \rangle | \leq \frac{ p-2}{ 2\sqrt{ p-1}} {\rm Re} \langle \Delta u, |u|^{p-2} u \rangle,
$$
and hence 
$
{\rm Re} \langle (1+ i \nu) \Delta u, |u|^{p-2} u \rangle \leq 0, 
$
see e.g.~\cite{Okazawa}. Thus for $p=4$ and $|\nu|\leq \sqrt{3}$ and using estimate for $\Phi_{\dt}$, we obtain  
$$
\begin{aligned}
I_1 &\leq \| e^{\dt A}  \Phi_{\dt}(U^{m}_{N,\dt})\|^4_{L^4} + \|e^{\dt A}(P_N -I)\Phi_{\dt}(U^{m}_{N,\dt}) \|^4_{L^4} 
\\
&\leq  e^{R \dt } \| U^{m}_{N,\dt}\|^4_{L^4} + \|e^{\dt A}(P_N -I)\Phi_{\dt}(U^{m}_{N,\dt}) \|^4_{L^4}. 
\end{aligned}
$$
The properties of the semigroup 
$$
\|e^{tA} w\|_{L^r} \leq C_{\beta,T} t^{-\frac d2(\frac 12 - \frac 1r) + \frac \beta 2}\|w\|_{H^\beta}
$$
and of  projection $P_N$, see Lemma~\ref{eq:proj}, imply the following estimate  
$$
\begin{aligned} 
 \|e^{\dt A}(P_N -I)\Phi_{\dt}(U^{m}_{N,\dt}) \|^4_{L^4} 
&\leq C_{\beta, T} \dt^{-1/2+2\beta}\|(P_N -I)\Phi_{\dt}(U^{m}_{N,\dt})\|^4_{H^\beta} \\
&\leq C_{\beta,T} \dt^{-1/2+2\beta} N^{4\beta} \|U^{m}_{N,\dt}\|^4_{L^2} \leq C_{\beta, T}\dt \|U^{m}_{N,\dt}\|^4_{L^4},
\end{aligned}
$$
for $N^{-1}\leq C \dt^{\frac 1 2 +\varsigma}$ and  $\beta<0$ such that $\varsigma |\beta| \geq 3/8$. 
To estimate the next five terms in the expression for $\|U^{m+1}_{N,\dt}\|^4_{L^4}$, we  apply  the H\"older inequality and the estimates for the semigroup and  $\Phi_{\dt}$ and obtain 
$$
\begin{aligned}
|I_2+I_3+I_4 + I_5+I_6| &\leq   C\Big[ \dt \big\| e^{\dt A} P_N \Phi_{\dt}(U^{m}_{N,\dt})\big\|^4_{L^4} +
\sigma^4  \frac {1} { \dt} 
 \big \|\mathcal J_{W_N}^m\big\|_{L^4}^4 \Big]\\
 & \leq C \Big[\dt \|U^{m}_{N,\dt}\|_{L^4}^4 + \sigma^4  \frac {1} { \dt} 
 \big \|\mathcal J_{W_N}^m\big\|_{L^4}^4\Big].
 \end{aligned}
$$
Combining the estimates from above yields 
$$
\begin{aligned} 
\big\|U^{m+1}_{N,\dt}\big\|^4_{L^4} \leq e^{R \dt } \| U^{m}_{N,\dt}\|^4_{L^4} + C \dt  \| U^{m}_{N,\dt}\|^4_{L^4}  +  C\sigma^4(1+ 1/\dt)  \big \|\mathcal J_{W_N}^m\big\|_{L^4}^4. 
\end{aligned} 
$$
Summing over $m=0, \ldots, \tilde m-1$, using the telescoping series property and $e^{R\dt} \leq 1 + C\dt$ for $\dt \leq 1$,  and applying the discrete Gr\"onwall inequality yields
$$
\begin{aligned} 
\big\|U^{\tilde m}_{N,\dt}\big\|^4_{L^4} \leq C \|U^{0}_{N,\dt}\|_{L^4}^4+C \sigma^4 \sum_{m=0}^{\tilde m-1} (1+ 1/\dt)  \big \|\mathcal J_{W_N}^m\big\|_{L^4}^4, 
\end{aligned} 
$$
 for all $\tilde m=1, \ldots M$. Taking the expectation and using \eqref{estim_conv_23} for $p=4$ we obtain
\begin{equation} \label{estim_222}
\begin{aligned}
 \mathbb E \big[\sup_{1\leq m\leq M} \|U^{m}_{N,\dt}\|^4_{L^4}\big]
 &\leq  C \|U^{0}_{N,\dt}\|^4_{L^4}+ C \sigma^4 \Big[1+\frac 1\dt\Big]   \sum_{i=0}^{M-1}  \mathbb E \big[\big \|\mathcal J_{W_N}^i\big\|_{L^4}^4\big] \\
& \leq C \|P_Nu_0\|^4_{L^4}+C\sigma^4 \Big[1+\frac 1\dt\Big] \sum_{i=0}^{M-1}
\Big(\int_{t_i}^{t_{i+1}}\!\!\! \big\| e^{ A(t_{i+1}-s)}  B \tilde \Delta^{-\frac{r}{2}-\frac{1}{4}-\frac{\varepsilon}{4}}\big\|^2_{\mathcal L(L^2)} ds  \Big)^{2}\\
 & \leq   C (1+ \dt),
\end{aligned}
\end{equation}
which, together with the estimate for the second moment, yields the result stated in the lemma. 
\end{proof}
\subsection{Convergence results}
With all the moment bounds in hand we can prove the following convergence results for the fully discretized scheme~\eqref{eq:Full}.  

\begin{theorem}\label{conv}
Let $W(t)$ be $\dot{H}^r-$valued with $r\geq 0$, $|\nu|\leq \sqrt{3}$ and $u_0\in\dot{H}^\alpha$, with $\alpha\in[1,r+1]\cap[1,2)$. Let $\tilde{\alpha}\in (0,\alpha)$, $\beta\in (0,\min(\frac{1}{2},\frac{\alpha}{2}))$. Then for all $\dt\in(0,1)$ and $N\in\N$ with  $N^{-1}\leq C \dt^{\frac 1 2 +\varsigma}$ for some $\varsigma>0$, the following estimate holds
    $$
    \E\Big[\sup_{0\leq m \leq M}\1_{\Omega_{L,\alpha}}\|u(t_m)-U_{N,\dt}^{m}\|^2\Big]\leq C_{T,L,\alpha,\beta}\big(\dt^{2\beta}+N^{-2\tilde{\alpha}}\big),$$
    where $C_{T,L,\alpha,\beta}>0$ and 
    $$\Omega_{L,\alpha}:=\{\omega\in\Omega: \; \mathrm{H}_{\alpha}\leq L \; \text{ for all }  N\in\N \},$$
    with
    $$\mathrm{H}_{\alpha}:=\sup_{t\in[0,T]}\left(\|u(t)\|_{\dot{H}^{\alpha}}^2+\|u_N(t)\|_{\dot{H}^{\alpha}}^2+\|u_{N,\dt}\|_{\dot{H}^{\alpha}}^2+\|\eta(t)\|_{\dot{H}^{\alpha}}^2\right)+\sup_{0\leq m \leq M} \|U_{N,\dt}^m\|_{L^4}^2.$$
\end{theorem}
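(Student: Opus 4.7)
The plan is to decompose the global error by interpolating through two intermediate quantities: the spectral Galerkin solution $u_N$ from \eqref{eq:spec_CGLE} and the auxiliary SPDE solution $u_{N,\dt}$ from \eqref{eq:auxeq}. By the triangle inequality, on $\Omega_{L,\alpha}$,
\begin{equation*}
\|u(t_m)-U_{N,\dt}^m\| \leq \|u(t_m)-u_N(t_m)\| + \|u_N(t_m)-u_{N,\dt}(t_m)\| + \|u_{N,\dt}(t_m)-U_{N,\dt}^m\|.
\end{equation*}
The set $\Omega_{L,\alpha}$ is crucial: it fixes a pathwise bound on $\dot{H}^\alpha$-norms, hence via the Sobolev embedding $\dot H^\alpha\hookrightarrow V$ for $\alpha>1/2$, the local Lipschitz constants from \eqref{eq:loc_lip_L2} and Lemma~\ref{Psi_loc_lip}(i) become deterministic constants depending only on $L$. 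This is what makes the Grönwall-type closure viable for the locally Lipschitz cubic nonlinearity.

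For the spatial error $u(t_m)-u_N(t_m)$, I would subtract the two mild formulations \eqref{eq:mild_SCGLE} and \eqref{eq:spec_mild}, apply Lemma~\ref{eq:proj} to the initial-data term $(I-P_N)e^{tA}u_0$ and to $(I-P_N)\Psi_0(u)$ to extract the factor $N^{-\tilde{\alpha}}$ (losing a small regularity via Lemma~\ref{lem:semigroup}(ii)), and bound $\|P_N[\Psi_0(u)-\Psi_0(u_N)]\|$ on $\Omega_{L,\alpha}$ using \eqref{eq:loc_lip_L2}. Grönwall then yields the $N^{-2\tilde\alpha}$ contribution. For the middle term $u_N(t_m)-u_{N,\dt}(t_m)$, the mild formulations differ only in the nonlinearity; applying Lemma~\ref{Psi_loc_lip}(ii) bounds $\|\Psi_\dt(u_N)-\Psi_0(u_N)\|$ by $C\dt(1+\|u_N\|_V^5)$, which is $O(\dt)$ on $\Omega_{L,\alpha}$, and Lemma~\ref{Psi_loc_lip}(i) gives local Lipschitz control of $\Psi_\dt(u_N)-\Psi_\dt(u_{N,\dt})$; Grönwall closes this contribution at rate $\dt$.

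The main obstacle is the time-discretization error $u_{N,\dt}(t_m)-U_{N,\dt}^m$, the accelerated Euler error for the auxiliary SPDE. I would first note that both $u_{N,\dt}(t_{m+1})$ and $U_{N,\dt}^{m+1}$ share the same stochastic convolution increment, so those terms cancel. The deterministic residual is
\begin{equation*}
\sum_{j=0}^{m-1} e^{(m-1-j)\dt A}\!\!\int_{t_j}^{t_{j+1}}\!\!\!\! e^{(t_{j+1}-s)A}\!\Big(P_N\Psi_\dt(u_{N,\dt}(s))-e^{(s-t_{j+1})A}P_N\Psi_\dt(U_{N,\dt}^j)\Big)ds,
\end{equation*}
which I split into (a) the time-regularity piece $e^{(t_{j+1}-s)A}P_N[\Psi_\dt(u_{N,\dt}(s))-\Psi_\dt(u_{N,\dt}(t_j))]$, bounded using Lemma~\ref{Psi_loc_lip}(i) on $\Omega_{L,\alpha}$ together with a Hölder-in-time estimate $\|u_{N,\dt}(s)-u_{N,\dt}(t_j)\|\leq C(s-t_j)^\beta$ obtained from the mild formula, Lemma~\ref{lem:semigroup}(iii), and Lemma~\ref{stoch_time_reg}; (b) the absorbed exponential $[I-e^{(s-t_{j+1})A}]P_N\Psi_\dt(u_{N,\dt}(t_j))$, controlled by Lemma~\ref{lem:semigroup}(iii) at rate $(t_{j+1}-s)^\beta$; and (c) the recursive error $e^{(t_{j+1}-s)A}P_N[\Psi_\dt(u_{N,\dt}(t_j))-\Psi_\dt(U_{N,\dt}^j)]$, Lipschitz-controlled on $\Omega_{L,\alpha}$ with the $L^4$ moment bound from Lemma~\ref{full_disc_moment_b}. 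Summing over $j$ gives a discrete Grönwall inequality whose forcing is of order $\dt^\beta$, producing the $\dt^{2\beta}$ bound. Taking the sup over $0\leq m\leq M$ and expectation, and combining with the first two estimates, yields the stated rate. The most delicate point is that the $V$-norm control on $\Omega_{L,\alpha}$ is indispensable throughout, since otherwise the cubic nonlinearity cannot be absorbed into a Grönwall argument.
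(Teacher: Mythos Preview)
Your decomposition and overall strategy match the paper's exactly: the same three-term split (Lemmas~\ref{Space_convergence}, \ref{aux_conv}, \ref{split_conv}), the same use of $\Omega_{L,\alpha}$ to render the cubic nonlinearity deterministically Lipschitz, and the same three-part breakdown of the time-discretization residual. Your parts (a), (b), (c) correspond precisely to the paper's $II$, $III$, $I$.

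There is one point where your sketch is too vague and, read literally, would fail. For the recursive term (c) you write ``Lipschitz-controlled on $\Omega_{L,\alpha}$ with the $L^4$ moment bound''. But on $\Omega_{L,\alpha}$ the discrete solution $U^m_{N,\dt}$ is bounded only in $L^4$, \emph{not} in $V$; the pointwise Lipschitz estimate of Lemma~\ref{Psi_loc_lip}(i) therefore does not give an $L^2$ bound on $\Psi_\dt(u_{N,\dt}(t_j))-\Psi_\dt(U^j_{N,\dt})$ directly, since that would require $\|U^j_{N,\dt}\|_V^2$. The paper handles this by first estimating in $L^1$ via H\"older,
\[
\|\Psi_\dt(u_{N,\dt}(t_j))-\Psi_\dt(U^j_{N,\dt})\|_{L^1}\le C\big(\|u_{N,\dt}(t_j)\|_{L^4}^2+\|U^j_{N,\dt}\|_{L^4}^2\big)\|u_{N,\dt}(t_j)-U^j_{N,\dt}\|,
\]
and then invoking the $L^1\to L^2$ smoothing of the semigroup, $\|e^{tA}w\|\le C t^{-1/4}\|w\|_{L^1}$, to return to $L^2$. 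This produces an integrable weight $t_{M-m}^{-1/2}$ in the discrete Gr\"onwall sum. Your closing remark that ``$V$-norm control on $\Omega_{L,\alpha}$ is indispensable throughout'' is thus slightly misleading: for the numerical iterates only $L^4$ control is available, and the smoothing trick is what makes the argument close.
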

The error estimate in Theorem~\ref{conv} also implies the lower bound for the convergence rate of the numerical approximation in the probability sense.

\begin{corollary}\label{end_result}
Under the assumptions in Theorem~\ref{conv} we have 
$$\lim_{\dt\rightarrow 0,N\rightarrow \infty}\Prob\Big(\sup_{0\leq m\leq M}\|U^m_{N,\dt}-u(t_m)\|\geq K [N^{-\tilde{\alpha}}+\dt^{\beta}]\Big)=0.$$   
\end{corollary}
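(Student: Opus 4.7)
The plan is to combine the strong error bound of Theorem~\ref{conv} with Markov's inequality and the uniform moment bounds supplied by Theorems~\ref{mom_us},~\ref{mom_u_Ns}, Lemma~\ref{stoch_app}, Lemma~\ref{mom_aux}, and Lemma~\ref{full_disc_moment_b}. Fix $\varepsilon>0$ and split
\begin{equation*}
\Prob\Big(\sup_m \|u(t_m) - U_{N,\dt}^m\| \geq K(N^{-\tilde\alpha}+\dt^\beta)\Big) \leq \Prob(\Omega_{L,\alpha}^c) + \Prob\Big(\sup_m \|u(t_m) - U_{N,\dt}^m\| \geq K(N^{-\tilde\alpha}+\dt^\beta),\, \Omega_{L,\alpha}\Big).
\end{equation*}
The cited moment bounds give $\E[\mathrm{H}_\alpha]\leq C$ uniformly in $N,\dt$, so by Markov's inequality $\Prob(\Omega_{L,\alpha}^c)\leq C/L$, and thus $L=L(\varepsilon)$ can be chosen (independently of $N,\dt$) so that $\Prob(\Omega_{L,\alpha}^c)<\varepsilon/2$.

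For the remaining term, I would exploit that the exponents in Theorem~\ref{conv} lie in \emph{open} intervals: one may select $\tilde\alpha'\in(\tilde\alpha,\alpha)$ and $\beta'\in(\beta,\min(\tfrac12,\tfrac\alpha2))$ strictly larger and apply Theorem~\ref{conv} with $\tilde\alpha',\beta'$ in place of $\tilde\alpha,\beta$, yielding
\begin{equation*}
\E\Big[\1_{\Omega_{L,\alpha}}\sup_{0\leq m\leq M} \|u(t_m)-U_{N,\dt}^m\|^2\Big] \leq C_{T,L,\alpha,\beta'}\big(\dt^{2\beta'}+N^{-2\tilde\alpha'}\big).
\end{equation*}
Markov's inequality combined with $(\dt^\beta+N^{-\tilde\alpha})^2\geq \dt^{2\beta}+N^{-2\tilde\alpha}$ and the elementary estimate $(a'+b')/(a+b)\leq \max(a'/a,\,b'/b)$ (applied with $a=\dt^{2\beta}$, $b=N^{-2\tilde\alpha}$ and their primed analogues) then gives
\begin{equation*}
\Prob\Big(\sup_m \|u(t_m)-U_{N,\dt}^m\|\geq K(N^{-\tilde\alpha}+\dt^\beta),\,\Omega_{L,\alpha}\Big) \leq \frac{C_{T,L,\alpha,\beta'}}{K^2}\max\!\big(\dt^{2(\beta'-\beta)},\, N^{-2(\tilde\alpha'-\tilde\alpha)}\big),
\end{equation*}
which vanishes as $\dt\to 0,\,N\to\infty$ since $\beta'>\beta$ and $\tilde\alpha'>\tilde\alpha$.

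Putting the two estimates together and letting first $\dt\to 0,N\to\infty$ and then $\varepsilon\downarrow 0$ yields the desired limit. The only subtlety I anticipate is the following: applying Theorem~\ref{conv} with the \emph{same} exponents $\tilde\alpha,\beta$ would give only the bound $C_L/K^2$ on the right-hand side, which is bounded but does not tend to $0$ with $\dt,N$. The key move is therefore the open-interval argument that slightly strengthens the exponents in Theorem~\ref{conv}, after which the Markov estimate collapses to the desired vanishing ratio; the remainder is routine bookkeeping.
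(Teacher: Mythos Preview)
Your proposal is correct and follows essentially the same route as the paper: both split off $\Omega_{L,\alpha}^c$, control it by Markov and the uniform moment bounds, and then apply Theorem~\ref{conv} with strictly larger exponents $\tilde\alpha'>\tilde\alpha$, $\beta'>\beta$ (the paper calls these $\hat\alpha,\hat\beta$) so that the Markov quotient picks up the vanishing factor $\max\big(\dt^{2(\beta'-\beta)},N^{-2(\tilde\alpha'-\tilde\alpha)}\big)$. The only cosmetic difference is that you fix $\varepsilon$ first and choose $L=L(\varepsilon)$, whereas the paper passes to the limit in $\dt,N$ first and then lets $L\to\infty$; your elementary inequality $(a'+b')/(a+b)\le\max(a'/a,b'/b)$ is in fact slightly sharper than the paper's version with the extra factor~$2$.
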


\begin{proof}
Considering the set $\Omega_{L,\alpha}$, defined in Theorem~\ref{conv},  we can write 
\begin{equation*}
\begin{aligned} 
&  \Prob\Big(\sup_{0\leq m\leq M}\|U^m_{N,\dt}-u(t_m)\|\geq K (N^{-\tilde{\alpha}}+\dt^{\beta})\Big) \\
&  \leq \Prob\big(\Omega^c_{L,\alpha}\big)+\Prob\Big(\Omega_{L,\alpha}\cap\sup_{0\leq m\leq M}\|U^m_{N,\dt}-u(t_m)\|\geq K (N^{-\tilde{\alpha}}+\dt^{\beta})\Big) = I + II.
\end{aligned} 
\end{equation*}
Using  Markov's inequality and the moment bounds yield
\begin{equation*}
\begin{aligned} 
I= 
1-\Prob(\Omega_{L,\alpha})\leq \frac{\E(\mathrm{H}_{\alpha})}{L}\leq \frac{C_{T} }{L}.
      \end{aligned} 
\end{equation*}
For the second term, applying Markov's inequality and Theorem~\ref{conv} with $0<\tilde{\alpha}<\hat{\alpha}$ and $\hat{\beta}>\beta$, we obtain 
\begin{equation*}
\begin{aligned} 
  II    & \leq \frac{1}{K^2(N^{-2\tilde{\alpha}}+\dt^{2\beta})}\E\Big[\1_{\Omega_{L,\alpha}}\sup_{0\leq m\leq M}\|U^m_{N,\dt}-u(t_m)\|^2\Big] \\
  & \leq 
  \frac{C_{T, L, \hat \alpha, \hat \beta} (N^{-2\hat{\alpha}}+\dt^{2\hat{\beta}})}{K^2(N^{-2\tilde{\alpha}}+\dt^{2\beta})}
\leq 2\max\{\dt^{2(\hat{\beta}-\beta)},N^{2(\tilde{\alpha}-\hat{\alpha})}\}\frac{C_{T, L, \hat \alpha, \hat \beta}}{K^2}.
\end{aligned} 
\end{equation*}
Taking the limit $\dt\rightarrow 0$ and $N\rightarrow\infty$ yields 
$$
\limsup_{\dt\rightarrow 0,N\rightarrow \infty}\Prob\Big(\sup_{0\leq m\leq M}\|U^m_{N,\dt}-u(t_m)\|\geq K (N^{-\tilde{\alpha}}+\dt^{\beta})\Big)\leq \frac{C_{T}}{L},
$$
and $L$ can be chosen arbitrarily large.
\end{proof}

We divide the proof of Theorem~\ref{conv}, into three steps, corresponding to the three components of the error 
\begin{equation}\label{eq:split_error}
    \begin{split}
      \E\big[\1_{\Omega_{L,\alpha}}\|u(t_m)-U_{N,\dt}^{m}\|^2\big]&\leq \underbrace{\E\big[\1_{\Omega_{L,\alpha}}\|u(t_m)-u_N(t_m)\|^2\big]}_{\text{Spatial discretization error}}+\underbrace{\E\big[\1_{\Omega_{L,\alpha}}\|u_N(t_m)-u_{N,\dt}(t_m)\|^2\big]}_{\text{Auxiliary error}}\\
      & \quad  +\underbrace{\E\big[\1_{\Omega_{L,\alpha}}\|u_{N,\dt}(t_m)-U_{N,\dt}^m\|^2\big]}_{\text{Time discretization error}}  .
    \end{split}
\end{equation}
First, we estimate  the mean square error of the spatial discretization using the spectral Galerkin method. 
\begin{lemma}\label{Space_convergence}
    Under the assumptions in Theorem~\ref{conv}, there exists a positive constant $C_T>0$ such that for all $\tilde{\alpha}\in (0,\alpha]$,
    $$\E\Big[\sup_{t\in[0,T]}\|u(t)-u_{N}(t)\|^p\Big]\leq C_{T,\tilde\alpha}N^{-p\tilde{\alpha}}  \qquad \text{ for } \; p \in [2, \infty).$$
\end{lemma}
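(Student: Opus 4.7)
The plan is to subtract the mild formulations for $u$ and $u_N$, control the ``data'' parts of the error pathwise with rate $N^{-\tilde\alpha}$, and close the argument with a pathwise Grönwall inequality before taking expectations. Writing $\Psi_0(u)-P_N\Psi_0(u_N)=(I-P_N)\Psi_0(u)+P_N[\Psi_0(u)-\Psi_0(u_N)]$ gives
\begin{equation*}
\begin{split}
u(t)-u_N(t) = e^{tA}(I-P_N)u_0 &+ (I-P_N)\eta(t) + \int_0^t e^{(t-s)A}(I-P_N)\Psi_0(u(s))\,ds \\
&+ \int_0^t e^{(t-s)A}P_N\big[\Psi_0(u(s))-\Psi_0(u_N(s))\big]\,ds.
\end{split}
\end{equation*}

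The first three terms I would bound pathwise using Lemma~\ref{eq:proj} together with Lemma~\ref{lem:semigroup}(ii) and the elementary estimate $\|\Psi_0(u)\|\leq C(1+\|u\|_V^3)$, yielding
\begin{equation*}
\big\|\text{first three terms}\big\|\leq CN^{-\tilde\alpha}\,Z(\omega),\qquad Z(\omega):=\|u_0\|_{\dot H^\alpha}+\sup_{t\in[0,T]}\|\eta(t)\|_{\dot H^{\tilde\alpha}}+\sup_{t\in[0,T]}\big(1+\|u(t)\|_V^3\big),
\end{equation*}
after using $\int_0^t(t-s)^{-\tilde\alpha/2}\,ds<\infty$. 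The Sobolev embedding $\dot H^\alpha\hookrightarrow V$ in 1D (valid since $\alpha\geq 1$), Theorem~\ref{mom_us}, and Lemma~\ref{stoch_app} ensure $Z\in\bigcap_{q<\infty}L^q(\Omega)$. For the fourth term, since $e^{(t-s)A}P_N$ is a contraction on $L^2$, the local Lipschitz bound~\eqref{eq:loc_lip_L2} gives
\begin{equation*}
\big\|e^{(t-s)A}P_N[\Psi_0(u)-\Psi_0(u_N)]\big\|\leq C\big(1+\|u(s)\|_V^2+\|u_N(s)\|_V^2\big)\|u(s)-u_N(s)\|.
\end{equation*}
Combining and applying Grönwall's inequality pathwise yields
\begin{equation*}
\sup_{t\in[0,T]}\|u(t)-u_N(t)\|\;\leq\; CN^{-\tilde\alpha}\,Z(\omega)\,\exp\!\Big(C\!\int_0^T\!\!\big(1+\|u(s)\|_V^2+\|u_N(s)\|_V^2\big)\,ds\Big).
\end{equation*}

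Raising to the $p$th power, taking expectations, and applying the Cauchy--Schwarz inequality reduce the proof to establishing the uniform-in-$N$ exponential moment
\begin{equation*}
\sup_{N\in\N}\E\Big[\exp\!\Big(c\!\int_0^T\!\!\big(\|u(s)\|_V^2+\|u_N(s)\|_V^2\big)\,ds\Big)\Big]<\infty\qquad \text{for every } c>0.
\end{equation*}
This is the main obstacle: the polynomial moments from Theorems~\ref{mom_us} and~\ref{mom_u_Ns} do not suffice directly. I would obtain the exponential moment by applying Itô's formula to $F(w)=\exp(c\|w\|^2)$ for small $c>0$, exploiting the real-part dissipation $\Re\langle-(1+i\mu)|w|^2w,\overline w\rangle=-\|w\|_{L^4}^4$ to render the drift of $F(u)$ negative modulo lower-order terms, and then transferring the resulting control of $\int_0^T\|\nabla u\|^2\,ds$ to $\int_0^T\|u\|_V^2\,ds$ by the 1D Gagliardo--Nirenberg interpolation $\|u\|_V^2\leq \tfrac12\|\nabla u\|^2+C\|u\|^2$. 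The same argument applies to $u_N$ with constants independent of $N$ (one works in the finite-dimensional spectral space and uses that $P_N$ is an $L^p$-contraction for the relevant $p$). Together with the Cauchy--Schwarz step, this gives $\E[\sup_{t\in[0,T]}\|u-u_N\|^p]\leq C_{T,\tilde\alpha,p}N^{-p\tilde\alpha}$ for every $p\in[2,\infty)$.
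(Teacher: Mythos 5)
Your decomposition and pathwise Gr\"onwall step reproduce the skeleton of the paper's argument (the paper works with the differential form of the error $e^N=P_Nu-u_N$, tested with $p\|e^N\|^{p-2}e^N$, rather than the mild form, but the content --- projection estimates from Lemma~\ref{eq:proj}, the local Lipschitz bound \eqref{eq:loc_lip_L2}, and Gr\"onwall producing an exponential of $\int_0^T(\|u\|_V^2+\|u_N\|_V^2)\,ds$ --- is the same). The genuine divergence, and the gap, is in how you take expectations of that exponential factor. The paper does \emph{not} attempt an unconditional bound: its proof ends by taking the expectation restricted to the localization set $\Omega_{L,\alpha}$ of Theorem~\ref{conv}, on which $\sup_t(\|u\|_{\dot H^\alpha}^2+\|u_N\|_{\dot H^\alpha}^2)\leq L$, so the exponential Gr\"onwall factor is a deterministic constant depending on $L$; this is exactly how the lemma is used in \eqref{eq:split_error}, it is why the constants in Theorem~\ref{conv} carry an $L$-dependence, and it is why the paper's main result is strong convergence on a set of arbitrarily large probability rather than unconditional strong convergence. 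Your proposal instead reduces the lemma to the uniform-in-$N$ exponential moment bound $\sup_N\E[\exp(c\int_0^T(\|u\|_V^2+\|u_N\|_V^2)\,ds)]<\infty$ for every $c>0$, which you correctly flag as the main obstacle but then only sketch. That claim is nowhere in the paper, and the polynomial moment bounds of Theorems~\ref{mom_us} and~\ref{mom_u_Ns} and Lemma~\ref{stoch_app} do not imply it, so as written the proof is incomplete at its crux.

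The sketch itself is plausible but hides real work. Because of the quartic dissipation ($\ipre{-(1+i\mu)|u|^2u,u}=-\|u\|_{L^4}^4$), the quadratic variation of the martingale term $2\sigma\int_0^t\ipre{u,dW}$ can indeed be absorbed into $\int\|u\|_{L^4}^4$ for \emph{any} $c>0$, so the claim is likely true; but a rigorous proof needs an exponential supermartingale estimate, a truncation or stopping-time argument to break the circularity (one bounds $\E[\exp(c\int\|\nabla u\|^2+c\int\|u\|_{L^4}^4)]$ by a power of itself, which is only conclusive if it is a priori finite), a check that all constants are uniform in $N$ for the Galerkin system, and the passage from $\int\|\nabla u\|^2$ to $\int\|u\|_V^2$ via Agmon/Gagliardo--Nirenberg. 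None of this is carried out. The short fix is to prove the statement with the indicator $\1_{\Omega_{L,\alpha}}$ inserted (which is both what the paper proves and what is actually needed in \eqref{eq:split_error}): on $\Omega_{L,\alpha}$ the Sobolev embedding $\dot H^1\hookrightarrow V$ bounds your exponential factor by $e^{C(1+L)T}$, and your argument then closes immediately using only the polynomial moments already available.
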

\begin{proof}
    Using Lemma~\ref{eq:proj}   and moment bounds for $u$ in $H^{\tilde{\alpha}}$ we obtain
     \begin{equation*}
        \begin{split}
    \E\Big[\sup_{t\in[0,T]}\|u(t)-u_{N}(t)\|^p\Big]
    &\leq\E\Big[\sup_{t\in[0,T]}\big\|(I-P_N)u(t)\big\|^p\Big]
    +\E\Big[\sup_{t\in[0,T]}\|P_Nu(t)-u_{N}(t)\|^p\Big]\\
    &\leq N^{-\tilde{\alpha} p}\E\Big[\sup_{t\in[0,T]}\|u(t)\|_{\dot{H}^{\tilde{\alpha}}}^p\Big]
     +\E\Big[\sup_{t\in[0,T]}\|P_Nu(t)-u_{N}(t)\|^p \Big] \\
     &\leq C_{T,\tilde\alpha}N^{-\tilde{\alpha} p}+\E \Big[\sup_{t\in[0,T]}\|P_Nu(t)-u_{N}(t)\|^p\Big]  .
    \end{split}
    \end{equation*} 
    Defining $e^N(t):=P_Nu(t)-u_{N}(t)$ and using  $e^N(0)=P_Nu(0)-u_N(0)=0$ and
     the formulas for  $P_N u(t)$ and $u_N(t)$ as mild solutions of the corresponding SPDEs, yield 
    $$e^N(t)=\int_0^te^{(t-s)A}\left[P_N \Psi_0(u(s))-P_N\Psi_0(u_N(s))\right]ds,
    $$
    which is a mild solution to
    \begin{equation}\label{eq_error_space}
    \frac{d}{dt}e^N=Ae^N +P_N\Psi_{0}(u)-P_N\Psi_{0}(u_N). 
    \end{equation}
    Using
    \begin{equation*}\frac{d}{dt}\|u(t)\|^{p}
    =p\|u(t)\|^{p-2}\Big\langle \frac{du}{dt}(t), u(t) \Big\rangle_\R,
    \end{equation*}
 together with $\langle e^N(t),-A e^N(t) \rangle_{\R} \leq 0$,  and testing \eqref{eq_error_space} by $p\|e^N(t)\|^{p-2}  e^N(t)$ imply
    \begin{equation}\label{eq:start_mu}
    \begin{split}
           \frac{d}{dt}\|e^N(t)\|^p = & p\|e^N(t)\|^{p-2}\big\langle e^N(t),-Ae^N(t)+P_N\Psi_{0}(u(t))-P_N\Psi_{0}(u_N(t))\big\rangle_\R\\
    \leq & p\|e^N(t)\|^{p-2}\langle e^N(t),P_N\Psi_{0}(P_Nu(t))-P_N\Psi_{0}(u_N(t))\rangle_\R\\
     &+p\|e^N(t)\|^{p-2}\langle e^N(t),P_N\Psi_{0}(u(t))-P_N\Psi_{0}(P_Nu(t))\rangle_\R.
    \end{split}
    \end{equation}
    Using properties of $\Psi_0$, see~\eqref{eq:loc_lip_L2} and~\eqref{eq:F_bound}, together with the Cauchy-Schwarz and Young inequalities, yield
    \begin{equation*}
    \begin{split}
    \frac{d}{dt}\|e^N(t)\|^p&\leq  p\|e^N(t)\|^{p-1}\big(\|\Psi_{0}(P_Nu(t))-\Psi_{0}(u_N(t))\| +\|P_N\Psi_{0}(u(t))-P_N\Psi_{0}(P_Nu(t))\| \big)\\
   & \leq  C\big(1+\|P_Nu(t)\|_V^{2p}+\|u_N(t)\|_V^{2p}\big)\|e^N(t)\|^{p}+ (p-1)\|e^N(t)\|^{p}\\
   &\quad+C\big(1+\|P_Nu(t)\|_V^{2p}+\|u(t)\|_V^{2p}\big)\|(I-P_N)u(t)\|^p
    \end{split}
    \end{equation*}
   Integrating the last inequality over time and using   Lemma~\ref{eq:proj} implies
    \begin{equation*}
    \begin{split}
           \|e^N(t)\|^p
           &\leq C\big(1+\sup_{t\in[0,T]}\|P_Nu(t)\|_V^{2p}+\sup_{t\in[0,T]}\|u_N(t)\|^{2p}_V\big)\int_0^t\|e^N(t)\|^{p}ds\\
           &\quad+C_T\big(1+\sup_{t\in[0,T]}\|P_Nu(t)\|_V^{2p}+\sup_{t\in[0,T]}\|u(t)\|_V^{2p}\big)\sup_{t\in[0,T]}\|u(t)\|_{\dot H^{\tilde{\alpha}}}^pN^{-p\tilde\alpha}.
    \end{split}
    \end{equation*}
    Applying Grönwall's inequality implies 
    $$
    \begin{aligned}
     \sup_{t\in[0,T]}\|e^N(t)\|^p& \leq C_{T,\alpha}\big(1+\sup_{t\in[0,T]}\|u(t)\|_{\dot{H}^{\alpha}}^{3p}\big) \exp\big[C\big(\sup_{t\in[0,T]}\|u(t)\|_{\dot H^{\alpha}}^{2p}+\sup_{t\in[0,T]}\|u_N(t)\|_{\dot H^{\alpha}}^{2p}\big)\big]N^{-p\tilde{\alpha}}.
    \end{aligned}
    $$
 Taking the expectation over the set $\1_{\Omega_{L,\alpha}}$ we obtain the result stated in the lemma. 
\end{proof}

Now we  prove the convergence result for the auxiliary error.    
\begin{lemma}\label{aux_conv}
Under the assumptions on the noise and initial conditions as in Theorem~\ref{conv},  for all $N\in\N$ and $\dt\in (0,1)$ holds 
 $$
 \E\Big[\1_{\Omega_{L,\alpha}}\sup_{0\leq t\leq T}\|u_{N,\dt}(t)-u_{N}(t)\|^2\Big]\leq C_{T,L}\, \dt^2,$$ 
 where $C_{T,L}>0$.
\end{lemma}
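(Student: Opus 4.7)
The plan is to exploit the fact that $u_{N,\dt}$ and $u_N$ solve SPDEs driven by the \emph{same} noise $W_N$, with the same linear part $A$ and the same initial data $P_N u_0$. Hence in the difference $e(t) := u_{N,\dt}(t) - u_N(t)$ both the stochastic convolution and the initial term cancel, and $e$ satisfies the mild equation
\begin{equation*}
e(t) \;=\; \int_0^t e^{(t-s)A} P_N\bigl[\Psi_{\dt}(u_{N,\dt}(s)) - \Psi_0(u_N(s))\bigr]\, ds,
\end{equation*}
or equivalently $\partial_t e = Ae + P_N[\Psi_{\dt}(u_{N,\dt}) - \Psi_0(u_N)]$ with $e(0)=0$. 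The crucial algebraic step is to split the driving term as
\begin{equation*}
\Psi_{\dt}(u_{N,\dt}) - \Psi_0(u_N) \;=\; \bigl[\Psi_{\dt}(u_{N,\dt}) - \Psi_{\dt}(u_N)\bigr] + \bigl[\Psi_{\dt}(u_N) - \Psi_0(u_N)\bigr],
\end{equation*}
so that Lemma~\ref{Psi_loc_lip}(i) controls the first bracket by $C(1+|u_{N,\dt}|^2+|u_N|^2)|e|$ (local Lipschitz) and Lemma~\ref{Psi_loc_lip}(ii) controls the second bracket by $C\dt(|u_N|+|u_N|^5)$ (consistency of order one in $\dt$).

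Next I would perform a pathwise energy estimate. Testing with $\bar e$ (justified either directly, since $e$ is smooth pointwise in $\omega$, or through the spectral Galerkin setting it already lives in), taking the real part, and using that $A$ is dissipative so that $\langle Ae, e\rangle_{\R}\leq 0$, gives
\begin{equation*}
\tfrac{1}{2}\tfrac{d}{dt}\|e(t)\|^2 \;\leq\; \bigl|\langle \Psi_{\dt}(u_{N,\dt}) - \Psi_{\dt}(u_N),\, e\rangle_{\R}\bigr| + \bigl|\langle \Psi_{\dt}(u_N) - \Psi_0(u_N),\, e\rangle_{\R}\bigr|,
\end{equation*}
where the $P_N$ has been absorbed since $e = P_N e$. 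Using Cauchy--Schwarz and the Sobolev embedding $\dot H^\alpha \hookrightarrow V$ for $\alpha\geq 1$, the first term is bounded by $C(1+\|u_{N,\dt}\|_V^2 + \|u_N\|_V^2)\|e\|^2$, and the second term, after Young's inequality, by $\tfrac12\|e\|^2 + C\dt^2(\|u_N\|^2 + \|u_N\|_{L^{10}}^{10})$.

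Now I multiply by $\1_{\Omega_{L,\alpha}}$. Since $\mathrm{H}_\alpha \leq L$ on $\Omega_{L,\alpha}$ and $\dot H^\alpha \hookrightarrow V \hookrightarrow L^{10}$, both $\|u_{N,\dt}\|_V, \|u_N\|_V$ and $\|u_N\|_{L^{10}}$ are bounded pathwise by a constant depending only on $L$, so we arrive at
\begin{equation*}
\tfrac{d}{dt}\bigl(\1_{\Omega_{L,\alpha}}\|e(t)\|^2\bigr) \;\leq\; C_L\, \1_{\Omega_{L,\alpha}}\|e(t)\|^2 + C_L\dt^2 .
\end{equation*}
Since $e(0)=0$, Grönwall's inequality yields $\1_{\Omega_{L,\alpha}}\sup_{t\in[0,T]}\|e(t)\|^2 \leq C_{T,L}\dt^2$ pathwise, and taking expectation gives the claim. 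The only mild delicacy is that $\Omega_{L,\alpha}$ is not $\mathscr F_t$-adapted because it is defined via a supremum over $[0,T]$; this causes no trouble because the estimate above is pathwise, so one can insert the indicator after the Grönwall step rather than inside the stochastic analysis. No Burkholder--Davis--Gundy or It\^o-isometry arguments are needed, precisely because the noise cancels in the difference.
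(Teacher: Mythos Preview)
Your proposal is correct and follows essentially the same approach as the paper: subtract the two mild formulations so the noise and initial data cancel, test the resulting random PDE for $e=u_{N,\dt}-u_N$ with $\bar e$, split $\Psi_{\dt}(u_{N,\dt})-\Psi_0(u_N)$ into a local Lipschitz part (Lemma~\ref{Psi_loc_lip}(i)) and a consistency part of order $\dt$ (Lemma~\ref{Psi_loc_lip}(ii)), use the pathwise bound on $\Omega_{L,\alpha}$ to control all coefficients, and close with Gr\"onwall. Your additional remarks on the Sobolev embedding $\dot H^\alpha\hookrightarrow V\hookrightarrow L^{10}$ and on the non-adaptedness of $\Omega_{L,\alpha}$ being harmless for a pathwise argument are exactly the implicit points the paper's short proof takes for granted.
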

\begin{proof}
The error $e_{N,\dt}(t):=u_{N,\dt}(t)-u_{N}(t)$ satisfies 
\begin{equation}\label{prob:error_ax}
\begin{aligned} 
\partial_t e_{N,\dt} &=A e_{N,\dt} +P_N\big(\Psi_{\dt}(u_{N,\dt})-\Psi_{0}(u_{N})\big),\\
e_{N,\dt}(0)&=0.
\end{aligned}
\end{equation}
 Considering $\bar{e}_{N,\dt}$ as a test function in the weak formulation of \eqref{prob:error_ax} and using the estimates in Lemma~\ref{Psi_loc_lip} and the moment bounds for $u_{N,\dt}$ and $u_N$ imply
\begin{equation*}
    \begin{split}
        \frac{1}{2}\frac{d}{dt}\|e_{N,\dt}(t)\|^2
       & \leq \big\langle \Psi_{\dt}(u_{N,\dt}(t))-\Psi_{\dt}(u_{N}(t)), e_{N,\dt}(t)\big\rangle_\R  + \big\langle \Psi_{\dt}(u_{N}(t))-\Psi_{0}(u_{N}(t)), e_{N,\dt}(t)\big\rangle_\R \\
         &\leq C_{T, L}\|e_{N,\dt}(t)\|^2 +\|\Psi_{\dt}(u_{N}(t))-\Psi_{0}(u_{N}(t))\|^2  \leq  C_{T, L}\big(\dt^2+\|e_{N,\dt}(t)\|^2\big).
    \end{split}
\end{equation*}
Applying Grönwall's lemma yields the result.  
\end{proof}
To estimate the time discretization error, we  need to prove a time regularity result for the auxiliary SPDE~\eqref{eq:auxeq}.
\begin{lemma}\label{time_reg}
Under the assumptions in Theorem~\ref{conv},   for all $N\in\N$ and $\beta\in \big(0,\min\{\frac{\alpha}{2},\frac{1}{2}\}\big)$ we have 
    $$
    \E\big[\1_{\Omega_{L,\alpha}}\|u_{N,\dt}(t)-u_{N,\dt}(s)\|^2\big]\leq C_{T,L,\beta}|t-s|^{2\beta},
    $$
    where $C_{T,L, \beta}>0$.
\end{lemma}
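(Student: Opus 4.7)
The plan is to start from the mild formulation of the auxiliary SPDE~\eqref{eq:auxeq},
$$u_{N,\dt}(t)=e^{tA}P_Nu_0+\int_0^t e^{(t-\tau)A}P_N\Psi_\dt(u_{N,\dt}(\tau))\,d\tau+\eta_N(t),$$
and split the increment for $0\leq s\leq t\leq T$ into four pieces: the initial-data piece $(e^{tA}-e^{sA})P_Nu_0$, a ``semigroup-difference'' drift integral over $[0,s]$, a short-interval drift integral over $[s,t]$, and the stochastic increment $\eta_N(t)-\eta_N(s)=P_N(\eta(t)-\eta(s))$. Each piece is handled separately, and their four time-regularity exponents must all dominate $(t-s)^\beta$.

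For the initial-data piece, factor $e^{tA}-e^{sA}=e^{sA}(e^{(t-s)A}-I)$, apply $L^2$-contractivity of the semigroup, and use Lemma~\ref{lem:semigroup}(iii) with $\gamma=\beta$ to obtain the bound $C(t-s)^\beta\|u_0\|_{\dot H^{2\beta}}\leq C(t-s)^\beta\|u_0\|_{\dot H^\alpha}$; this step forces $2\beta<\alpha$. For the drift integrals, the key pathwise input is the bound $\sup_{\tau\in[0,T]}\|\Psi_\dt(u_{N,\dt}(\tau))\|\leq C_L$ on $\Omega_{L,\alpha}$: it follows from the cubic growth in Lemma~\ref{cubic}(ii), the 1D Sobolev embedding $\dot H^\alpha\hookrightarrow L^\infty$ (valid since $\alpha\geq 1>1/2$), and the definition of $\Omega_{L,\alpha}$. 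On the interval $[0,s]$, decompose
$$e^{(s-\tau)A}(e^{(t-s)A}-I)=\bigl(e^{(s-\tau)A}\tilde\Delta^{\beta}\bigr)\bigl(\tilde\Delta^{-\beta}(e^{(t-s)A}-I)\bigr),$$
apply Lemma~\ref{lem:semigroup}(ii)--(iii) to get the operator-norm bound $C(s-\tau)^{-\beta}(t-s)^\beta$, and integrate (the singularity $(s-\tau)^{-\beta}$ is integrable since $\beta<1/2<1$) to obtain an overall pathwise bound $C_{T,L}(t-s)^\beta$. On $[s,t]$, contractivity of $e^{(t-\tau)A}$ together with the same pathwise bound on $\Psi_\dt(u_{N,\dt})$ yields $C_L(t-s)$, which is dominated by $C_{T,L}(t-s)^\beta$ since $\beta\leq 1$ and $t-s\leq T$.

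For the stochastic increment, the inequality $\|P_Nv\|\leq\|v\|$ reduces the estimate to Lemma~\ref{stoch_time_reg}, yielding $\E\|\eta_N(t)-\eta_N(s)\|^2\leq C_T(t-s)\leq C_{T,\beta}(t-s)^{2\beta}$; this step forces $\beta<1/2$. Squaring the three deterministic pathwise bounds, multiplying by $\mathbf{1}_{\Omega_{L,\alpha}}$, taking expectation, and combining with the stochastic bound yields the claim. The main technical point is simply the bookkeeping of $(t-s)$-exponents across the four terms; the two sharp restrictions $\beta<\alpha/2$ (from the initial data) and $\beta<1/2$ (from the stochastic convolution) reproduce exactly the hypothesis $\beta\in(0,\min\{\alpha/2,1/2\})$, while all the heavier inputs (moment bounds for $u_{N,\dt}$ and time regularity of $\eta$) are already supplied by Lemma~\ref{mom_aux} and Lemma~\ref{stoch_time_reg}.
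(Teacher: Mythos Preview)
Your proposal is correct and follows essentially the same route as the paper: the same four-piece decomposition of the mild formula (initial data, drift over $[0,s]$, drift over $[s,t]$, stochastic convolution), with Lemma~\ref{lem:semigroup}(ii)--(iii) handling the semigroup factors, Lemma~\ref{cubic}(ii) together with the definition of $\Omega_{L,\alpha}$ controlling $\|\Psi_\dt(u_{N,\dt})\|$, and Lemma~\ref{stoch_time_reg} handling the noise increment. The only cosmetic difference is that the paper squares first and then uses Cauchy--Schwarz on the $[0,s]$-integral (so the integrability condition reads $2\beta<1$), whereas you bound the norm pathwise before squaring (so the condition reads $\beta<1$); both lead to the same constraint $\beta<1/2$ coming ultimately from the stochastic term.
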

\begin{proof} 
Using~\eqref{eq:spec_mild} for the mild solution $u_{N, \dt}$ of the auxiliary SPDE~\eqref{eq:auxeq} yields
\begin{equation*}
    \begin{split}  
        \E\big[\1_{\Omega_{L,\alpha}}\|u_{N,\dt}(t)-u_{N,\dt}(s)\|^2\big]
      \! & \leq   C\Big(\|P_N(e^{tA}-e^{sA})u_0\|^2+\E\big[\1_{\Omega_{L,\alpha}}\|P_N(\eta(t)-\eta(s))\|^2\big] \Big)\\
        &\quad +C(t-s)\int_s^{t}\E\big[\1_{\Omega_{L,\alpha}}\|e^{(t-\tau) A}P_N\Psi_{\dt}(u_{N,\dt}(\tau))\|^2\big]d\tau\\
        &\quad +C_T\int_0^s\E\big[\1_{\Omega_{L,\alpha}}\big\|e^{(s-\tau)A}(e^{(t-s)A}-I)P_N\Psi_{\dt}(u_{N,\dt}(\tau)\big)\big\|^2\big]d\tau\\
        & =  I+II+III+IV.
    \end{split}
\end{equation*}
Using Lemma~\ref{lem:semigroup} and  $u_0\in \dot H^{2\beta}$, for the first term  we obtain 
\begin{equation*}
\begin{split}
   I&\leq C\|e^{sA}(e^{(t-s)A}-I)u_0\|^2=C\|e^{sA}\tilde\Delta^{-\beta}(e^{(t-s)A}-I)\tilde\Delta^{\beta}u_0\|^2\\
    &\leq C\|e^{sA}\|^2_{\mathcal{L}(L^2)}\|\tilde\Delta^{-\beta}(e^{(t-s)A}-I)\|^2_{\mathcal{L}(L^2)}\|\tilde\Delta^{\beta}u_0\|^2\leq C_T|t-s|^{2\beta}\|u_0\|^2_{\dot{H}^{2\beta}}.
\end{split}
\end{equation*}
The estimate for $II$ is proven in Lemma \ref{stoch_time_reg}.
For   $u_{N,\dt} \in \Omega_{L,\alpha}$ the third term is estimated as 
\begin{equation*}
\begin{aligned}
    III  & \leq C|t-s|\int_s^{t}\E\big[\1_{\Omega_{L,\alpha}}\|\Psi_{\dt}(u_{N,\dt}(\tau))\|^2\big]d\tau
   \\
   & \leq C|t-s|\int_s^{t}\E\big[\1_{\Omega_{L,\alpha}}(1+L^6)\big]d\tau \leq C_{T,L}|t-s|^2.
   \end{aligned}
\end{equation*}
Using Lemma~\ref{lem:semigroup}, $2\beta<1$,  and $u_{N,\dt}\in\Omega_{L,\alpha}$, for the last term we obtain 
\begin{equation*}
\begin{split}
  IV  &\leq C_T\int_0^s\E\big[\1_{\Omega_{L,\alpha}}\|\tilde \Delta^{\beta}e^{(s-\tau)A}\tilde \Delta^{-\beta}(e^{(t-s)A}-I)\Psi_{\dt}(u_{N,\dt}(\tau))\|^2\big]d\tau\\
    &\leq C_T\int_0^s\E\big[\1_{\Omega_{L,\alpha}}\|\tilde \Delta^{\beta}e^{(s-\tau)A}\|_{\mathcal{L}(L^2)}^2\|\tilde \Delta^{-\beta}(e^{(t-s)A}-I)\|_{\mathcal{L}(L^2)}^2\|\Psi_{\dt}(u_{N,\dt}(\tau))\|^2\big]d\tau\\
    &\leq C_T\int_0^s\E\big[\1_{\Omega_{L,\alpha}}\|\tilde \Delta^{\beta}e^{(s-\tau)A}\|_{\mathcal{L}(L^2)}^2\|\tilde \Delta^{-\beta}(e^{(t-s)A}-I)\|_{\mathcal{L}(L^2)}^2\|\Psi_{\dt}(u_{N,\dt}(\tau))\|^2\big]d\tau\\
    &\leq C_T\int_0^s\frac{|t-s|^{2\beta}}{|s-\tau|^{2\beta}}\E\big[\1_{\Omega_{L,\alpha}}\|\Psi_{\dt}(u_{N,\dt}(\tau))\|^2\big]d\tau \leq C_{T,L,\beta}|t-s|^{2\beta}.
\end{split}
\end{equation*}
Combining the estimates above yields the result stated in the lemma. 
\end{proof}
Now we can prove  the estimates for the time discretization error.
\begin{lemma}\label{split_conv}
Under the assumptions in Theorem~\ref{conv} it
holds, with $C_{T,L,\beta}>0$,
 $$\E\big[\1_{\Omega_{L,\alpha}}\sup_{0\leq m\leq M}\|U_{N,\dt}^m-u_{N,\dt}(t)\|^2\big]\leq C_{T,L,\beta}\dt^{2\beta}.$$
\end{lemma}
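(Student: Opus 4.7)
The plan is to build a one-step recursion for $E^m:=U^m_{N,\dt}-u_{N,\dt}(t_m)$ from the two mild formulations, decompose the per-step error into three pieces of different character, estimate each separately on $\Omega_{L,\alpha}$, and close via discrete Gr\"onwall. Using $\Phi_\dt(z)=z+\dt\Psi_\dt(z)$, the scheme \eqref{eq:Full} reads
\[
U^{m+1}_{N,\dt}=e^{\dt A}U^m_{N,\dt}+\dt\,e^{\dt A}P_N\Psi_\dt(U^m_{N,\dt})+\sigma\!\!\int_{t_m}^{t_{m+1}}\!\!e^{(t_{m+1}-s)A}\,dW_N(s),
\]
whose stochastic term coincides with that of the mild formulation of \eqref{eq:auxeq}. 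Subtracting and rewriting $\dt\,e^{\dt A}=\int_{t_m}^{t_{m+1}}e^{\dt A}\,ds$ yields $E^{m+1}=e^{\dt A}E^m+\mathcal I_1^m+\mathcal I_2^m+\mathcal I_3^m$ with
\begin{align*}
\mathcal I_1^m &:= \int_{t_m}^{t_{m+1}}\!\!e^{(t_{m+1}-s)A}P_N\bigl[\Psi_\dt(U^m_{N,\dt})-\Psi_\dt(u_{N,\dt}(t_m))\bigr]\,ds,\\
\mathcal I_2^m &:= \int_{t_m}^{t_{m+1}}\!\!e^{(t_{m+1}-s)A}P_N\bigl[\Psi_\dt(u_{N,\dt}(t_m))-\Psi_\dt(u_{N,\dt}(s))\bigr]\,ds,\\
\mathcal I_3^m &:= \int_{t_m}^{t_{m+1}}\!\!\bigl[e^{\dt A}-e^{(t_{m+1}-s)A}\bigr]P_N\Psi_\dt(u_{N,\dt}(t_m))\,ds.
\end{align*}

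The terms $\mathcal I_2^m$ and $\mathcal I_3^m$ are the splitting-consistency error and do not involve $E^j$. For $\mathcal I_3^m$, I would factorise $e^{\dt A}-e^{(t_{m+1}-s)A}=e^{(t_{m+1}-s)A}[e^{(s-t_m)A}-I]$ and apply Lemma~\ref{lem:semigroup}(iii) with exponent $\beta$; since $2\beta<\alpha$ and $u_{N,\dt}\in\dot H^\alpha\hookrightarrow V$ on $\Omega_{L,\alpha}$, the cubic growth of $\Psi_\dt$ from Lemma~\ref{cubic}(ii) gives $\|\Psi_\dt(u_{N,\dt}(t_m))\|_{\dot H^{2\beta}}\leq C_L$, whence $\|\mathcal I_3^m\|\leq C_{T,L}\dt^{1+\beta}$ pathwise. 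For $\mathcal I_2^m$, Lemma~\ref{Psi_loc_lip}(i) together with the $V$-bound on $u_{N,\dt}$ reduce the integrand to $C_L\|u_{N,\dt}(t_m)-u_{N,\dt}(s)\|$; squaring, integrating, and invoking Lemma~\ref{time_reg} yields $\E\bigl[\1_{\Omega_{L,\alpha}}\|\mathcal I_2^m\|^2\bigr]\leq C_{T,L,\beta}\dt^{2+2\beta}$. Combined with the semigroup contraction, a Cauchy--Schwarz step over the $M=T/\dt$ summation indices gives a cumulative contribution of order $\dt^{2\beta}$ in mean square.

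For $\mathcal I_1^m$, semigroup contraction on $L^2$ and Lemma~\ref{Psi_loc_lip}(i) provide the pointwise bound $\|\mathcal I_1^m\|\leq C\dt\bigl(1+\|U^m_{N,\dt}\|_V^2+\|u_{N,\dt}(t_m)\|_V^2\bigr)\|E^m\|$. The $V$-bound on $u_{N,\dt}$ is free from $\dot H^\alpha\hookrightarrow V$ on $\Omega_{L,\alpha}$; for the discrete iterate, which is only controlled in $L^4$ on $\Omega_{L,\alpha}$, one invokes the Bernstein inverse inequality $\|w\|_V\leq C N^{1/2}\|w\|_{L^2}$ on the $N$-dimensional spectral subspace together with the coupling $N^{-1}\leq C\dt^{1/2+\varsigma}$. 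This should produce a recursion of the form $\|E^{m+1}\|\leq(1+C_L\dt)\|E^m\|+\|\mathcal I_2^m\|+\|\mathcal I_3^m\|$ on $\Omega_{L,\alpha}$; discrete Gr\"onwall gives $\sup_m\|E^m\|\leq e^{C_L T}\sum_{j=0}^{M-1}\bigl(\|\mathcal I_2^j\|+\|\mathcal I_3^j\|\bigr)$, and squaring followed by $\E[\1_{\Omega_{L,\alpha}}\cdot]$ and Cauchy--Schwarz on the sum delivers the stated $\dt^{2\beta}$ bound.

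The principal obstacle is precisely the treatment of $\mathcal I_1^m$: because $\Omega_{L,\alpha}$ controls $U^m_{N,\dt}$ only in $L^4$ and not in $V$ or $\dot H^\alpha$, the pointwise Lipschitz bound for $\Psi_\dt$ does not by itself give a Lipschitz constant independent of $N$ and $\dt$. The coupling $N^{-1}\leq C\dt^{1/2+\varsigma}$ must be combined with the a~priori bound $\|E^m\|_{L^2}\leq\|E^m\|_{L^4}\leq 2\sqrt L$ on $\Omega_{L,\alpha}$---possibly via a short bootstrap absorbing the higher-order term $\dt N\|E^m\|^3$ that arises after splitting $U^m=E^m+u_{N,\dt}(t_m)$---so as to convert the Bernstein loss into a harmless power of $\dt^\varsigma$ and let Gr\"onwall close.
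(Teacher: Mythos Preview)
Your decomposition into $\mathcal I_1^m,\mathcal I_2^m,\mathcal I_3^m$ and the treatment of the two consistency pieces $\mathcal I_2^m,\mathcal I_3^m$ are essentially in line with the paper's argument (modulo iterating the recursion, which only changes the semigroup windows). The genuine gap is in the handling of $\mathcal I_1^m$, which you yourself flag as the principal obstacle; your proposed fix does not work.

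The coupling hypothesis is $N^{-1}\leq C\dt^{1/2+\varsigma}$, i.e.\ $N\geq c\,\dt^{-1/2-\varsigma}$. This is a \emph{lower} bound on $N$: there is no upper bound, so the Bernstein factor $N^{1/2}$ (or $N^{1/4}$ from $L^4$) cannot be converted into a positive power of $\dt$. In fact the coupling makes the Bernstein constant blow up, not shrink, as $\dt\to 0$. The suggested bootstrap via $\|E^m\|_{L^4}\leq 2\sqrt L$ runs into the same wall: the cubic remainder $\dt\,N\,\|E^m\|^3$ (or $\dt\,N^{1/2}L\,\|E^m\|$) carries an uncontrolled $N$-factor. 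Consequently the recursion $\|E^{m+1}\|\leq(1+C_L\dt)\|E^m\|+\cdots$ cannot be established from your one-step form and the Gr\"onwall closure fails.

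The paper avoids this obstacle by \emph{iterating first} and then exploiting the smoothing of the semigroup over the long window $t_{M-m}$, namely
\[
\|e^{t_{M-m}A}w\|\leq C\,t_{M-m}^{-1/4}\|w\|_{L^1}.
\]
Combined with the pointwise Lipschitz estimate and H\"older,
\[
\big\|\Psi_\dt(U^m_{N,\dt})-\Psi_\dt(u_{N,\dt}(t_m))\big\|_{L^1}
\leq C\bigl(1+\|U^m_{N,\dt}\|_{L^4}^2+\|u_{N,\dt}(t_m)\|_{L^4}^2\bigr)\|E^m\|,
\]
this requires only the $L^4$ control on $U^m_{N,\dt}$ that $\Omega_{L,\alpha}$ actually provides. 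One obtains
\[
I\leq C_{T,L}\,\dt\sum_{m=0}^{M-1}t_{M-m}^{-1/2}\,\E\bigl[\1_{\Omega_{L,\alpha}}\|E^m\|^2\bigr],
\]
a singular but integrable kernel, and the discrete Gr\"onwall inequality closes. This $L^1\!\to\!L^2$ smoothing step is the missing ingredient in your argument; once you iterate the recursion (so that the semigroup acts over $t_{M-m}$ rather than a single step $\dt$) the $V$-norm of the discrete iterate is never needed.

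A secondary remark: your estimate for $\mathcal I_3^m$ requires $\Psi_\dt(u_{N,\dt}(t_m))\in\dot H^{2\beta}$, which is true on $\Omega_{L,\alpha}$ (since $u_{N,\dt}\in\dot H^1\hookrightarrow V$ gives $|u|^2u\in\dot H^1$ in 1D) but is not a consequence of Lemma~\ref{cubic}(ii), which is purely pointwise. The paper sidesteps this as well by again putting the fractional derivative on the long-time semigroup via Lemma~\ref{lem:semigroup}(ii) and keeping $\Psi_\dt(u_{N,\dt})$ in $L^2$.
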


\begin{proof}
The formulas for the mild solutions $U_{N,\dt}^m$ and $u_{N,\dt}$ differ only in the nonlinear  terms. Hence we have 
\begin{equation*}
    \begin{split}
     U^{M}_{N,\dt}-u_{N,\dt}(T)
     &= \dt\sum_{m=0}^{M-1} e^{t_{M-m} A}\Big[\Psi_{\dt}(U_{N,\dt}^m)-\Psi_{\dt}(u_{N,\dt}(t_m))\Big]\\
        &\quad+\sum_{m=0}^{M-1}\int_0^{\dt} e^{t_{M-m} A}\big[\Psi_{\dt}(u_{N,\dt}(t_m))-\Psi_{\dt}(u_{N,\dt}(t_m+s))\big]ds\\
        &\quad+\sum_{m=0}^{M-1}\int_0^{\dt} \big[e^{t_{M-m} A}-e^{(t_{M-m}-s)A}\big]\Psi_{\dt}(u_{N,\dt}(t_m+s))ds.
    \end{split}
\end{equation*}
Taking the expectation yields the following three parts of the error 
\begin{equation*}
    \begin{aligned}
     &   \E\Big[\1_{\Omega_{L,\alpha}}\big\|U_{N,\dt}^{M}-u_{N,\dt}(T)\big\|^2\Big]
        \\
     &   \leq C_T\dt\sum_{m=0}^{M-1} \E\Big[\1_{\Omega_{L,\alpha}}\big\|e^{t_{M-m} A}[\Psi_{\dt}(U_{N,\dt}^m)-\Psi_{\dt}(u_{N,\dt}(t_m))]\big\|^2 \Big]\\
      & \quad  +C_T\!\sum_{m=0}^{M-1}\!\int_0^{\dt}\!\!\! \E \Big[\1_{\Omega_{L,\alpha}}\big\|e^{t_{M-m} A}[\Psi_{\dt}(u_{N,\dt}(t_m+s))-\Psi_{\dt}(u_{N,\dt}(t_m))]\big\|^2\Big]ds\\
     &  \quad  +C_T\!\sum_{m=0}^{M-1}\!\int_0^{\dt}\!\!\! \E\Big[\1_{\Omega_{L,\alpha}}\big\|[e^{(t_{M-m}-s)A}-e^{t_{M-m} A}]\Psi_{\dt}(u_{N,\dt}(t_m+s))\big\|^2\Big]ds
      \\ &  =: I+II+III.
    \end{aligned}
\end{equation*}
Using 
$
\|e^{t_{M-m} A} w\| \leq C t_{M-m}^{-1/4}\| w\|_{L^1}  $ for    $w \in L^1$, 
see e.g.~\cite{Oliver}, and  the bounded for $L^4$-norm of $U_{N, \dt}^{m}$ shown in Lemma~\ref{full_disc_moment_b}, the first term  can be estimated as  
$$
\begin{aligned}
 I &  \leq C_T \dt \sum_{m=0}^{M-1} t_{M-m}^{-\frac 12}
 \mathbb E \Big[ {\bf 1}_{\Omega_{L, \alpha} }\|
  \Psi_{\dt}(U_{N,\dt}^{m}) - \Psi_{\dt}(u_{N, \dt}(t_m))\|_{L^1}^2\Big]\\
 &\leq  C_T \dt \sum_{m=0}^{M-1} t_{M-m}^{-\frac 12}
 \mathbb E \Big[ {\bf 1}_{\Omega_{L, \alpha}}\big( \| U_{N,\dt}^{m}\|_{L^4}^2 + \| u_{N, \dt}(t_m)\|_{L^4}^2 \big) \|
  U_{N, \dt}^{m} -  u_{N, \dt}(t_m)\|^2\Big]
 \\
 & \leq  C_{T,L} \dt  \sum_{m=0}^{M-1} t_{M-m}^{-\frac 12}
 \E \Big[ \1_{\Omega_{L, \alpha}}\|
  U_{N, \dt}^{m} -  u_{N, \dt}(t_m)\|^2\Big].
\end{aligned}
$$
For part $II$, we use Lemmas~\ref{Psi_loc_lip} and~\ref{time_reg} to obtain
\begin{equation*} 
    \begin{split}
        II&\leq C_{T,L} \sum_{m=0}^{M-1}\int_0^{\dt} \E\Big[\1_{\Omega_{L,\alpha}}\|u_{N,\dt}(t_m+s)-u_{N,\dt}(t_m)\|^2\Big]ds\\
        &\leq C_{T,L} \sum_{m=0}^{M-1}\int_0^{\dt} s^{2\beta}ds\leq C_{T,L} \dt\sum_{m=0}^{M-1}\dt^{2\beta}\leq C_{T,L}T\dt^{2\beta}.
    \end{split}
\end{equation*}
For part $III$,  Lemma~\ref{lem:semigroup} implies 
\begin{equation*}
    \begin{split}
        III
        &\leq C_T\sum_{m=0}^{M-1}\int_0^{\dt} \E\Big[\1_{\Omega_{L,\alpha}}\|\tilde\Delta^{\beta}e^{(t_{M-m}-s)A}\|^2_{\mathcal{L}(L^2)}\|\tilde\Delta^{-\beta}(I-e^{sA})\|^2_{\mathcal{L}(L^2)} \|\Psi_{\dt}(u_{N,\dt}(t_m+s))\|^2\Big]ds\\
      & \leq C_T\sum_{m=0}^{M-1}\int_0^{\dt} \frac{s^{2\beta}}{(t_{M-m}-s)^{2\beta}}\E\big[\1_{\Omega_{L,\alpha}}\|\Psi_{\dt}(u_{N,\dt}(t_m+s))\|^2\big]ds.\\
        &\leq C_{T,L}\sum_{m=0}^{M-1}\int_0^{\dt} \frac{\dt^{2\beta}}{(t_{M-m}-s)^{2\beta}}ds\leq C_{T,L}\dt^{2\beta}\int_0^{T} \frac{ds}{s^{2\beta}}\leq C_{T,L,\beta}T\dt^{2\beta}.
    \end{split}
\end{equation*}
The estimates  for any $0<m<M$ and corresponding $0<t_m <T$ are performed in the same way. 
Combining the estimates above and applying  the discrete Grönwall inequality yield the result stated in the lemma. 
Note, for part $III$ we can obtain a sharper result by using the regularity of $\Psi_{\dt}(u_{N,\dt})$ and restrictions on $\beta$ are dictated by the estimates for part $II$.
\end{proof}

\begin{proof}[Proof of Theorem \ref{conv}]
 The result stated in the theorem follows by combining the estimates in Theorem~\ref{Space_convergence} and Lemmas~\ref{aux_conv} and~\ref{split_conv}.
\end{proof}
\newpage
\section{Numerical results}\label{sec_exp}

Our numerical method for solving \eqref{eq:CGLE}, defined in \eqref{eq:Full} and denoted below by \ESM, is given as  
\begin{equation*}\label{eq:FullRepeat}
U^{m+1}_{N,\dt}=e^{\dt A}P_N\Phi_{\dt}(U_{N,\dt}^m)+\sigma\int_{t_m}^{t_{m+1}}\!\!\! e^{(t_{m+1}-s)A}dW_N(s),
\end{equation*}
where $\Phi_\dt$ is specified in \eqref{eq:Phi}.

\subsection*{Notes on the implementation of \ESM}
We compute  $e^{tA}u$ in Fourier space, and calculate $\Phi_{\dt}(z)$ pointwise. For $P_N$ we use the discrete Fourier transform, yielding an aliasing error that was not considered in the error analysis. 
To evaluate the stochastic convolution, defined by \eqref{def_stoch_conv}, we have to sample from the random processes
$$a_k(t)=\int_0^t \sqrt{q_k}e^{(1+i\nu)\lambda_k(t-s)}d\beta_k^r(s)+i\int_0^t \sqrt{q_k}e^{(1+i\nu)\lambda_k(t-s)}d\beta_k^i(s),$$
for each $k\in \mathbb Z$. The random processes, $a_k$, have the same distribution as the solution of
\begin{equation}\label{eq:ak}
da_k=-(1+i\nu)\lambda_k a_k+ \sqrt{q_k} (d\beta_k^r+id\beta_k^i).
\end{equation}
Writing \eqref{eq:ak} in $\R^2$ for real and imaginary parts of $a_k$ 
we obtain a two-dimensional Ornstein-Uhlenbeck~(OU) process
$$d{\bf a}_k=\begin{pmatrix}
    -\lambda_k & \nu\lambda_k \\ -\nu\lambda_k & -\lambda_k\\
\end{pmatrix}{\bf a}_k+\sqrt{q_k}d{\boldsymbol{\beta}}_{k},$$
where ${\bf a}_k=(\Re(a_k),\Im(a_k))^T$ and $\boldsymbol{\beta}_k=(d\beta_k^r,d\beta_k^i)^T$ are i.i.d.. The following lemma gives the distribution of $a_k$. 
The proof can be found in \cite[Corollary 1]{vatiwutipong_alternative_2019}.
\begin{lemma}\label{Normal}
For all $a,b\in\R$, consider the two-dimensional OU process
$$d{\bf u}=-\begin{pmatrix}a & -b \\ b & a\end{pmatrix}{ \bf u}+\tilde \sigma d{\boldsymbol{\beta}},$$
with ${\bf u}_0=(0,0)^T$ and ${\boldsymbol\beta}=(\beta_1,\beta_2)^T$, where $\beta_{1}$ and $\beta_2$ are independent Brownian motions. Then ${\bf u}(t)$ is a multivariate Gaussian with mean zero and a diagonal covariance matrix with diagonal elements $\tilde \sigma^2\frac{1-e^{-2a t}}{2a}$ if $a\neq0$, otherwise the diagonal elements are $\tilde\sigma^2 t$, for $t\geq 0$.
\end{lemma}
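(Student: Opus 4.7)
The plan is to write the solution of the linear SDE by variation of parameters and then read off the law directly from the It\^o isometry. Set
$$
M:=\begin{pmatrix}a & -b \\ b & a\end{pmatrix}=aI+bJ, \qquad J:=\begin{pmatrix}0 & -1 \\ 1 & 0\end{pmatrix}.
$$
Since $aI$ and $bJ$ commute, I would first compute the matrix exponential
$$
e^{-Ms}=e^{-as}e^{-bsJ}=e^{-as}\bigl(\cos(bs)I-\sin(bs)J\bigr)=e^{-as}\begin{pmatrix}\cos(bs) & \sin(bs)\\ -\sin(bs) & \cos(bs)\end{pmatrix},
$$
using $J^2=-I$ and the series expansion of the exponential. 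The explicit mild solution of the SDE with $\mathbf u(0)=0$ is then
$$
\mathbf u(t)=\tilde\sigma\int_0^t e^{-M(t-s)}\,d\boldsymbol\beta(s).
$$

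Next I would argue that $\mathbf u(t)$ is Gaussian with mean zero: this is standard because the integrand $e^{-M(t-s)}$ is deterministic and $\boldsymbol\beta$ is a vector of independent standard Brownian motions, so each component of $\mathbf u(t)$ is a Wiener integral of a deterministic function, and any finite linear combination of components is Gaussian with mean zero. The covariance matrix follows from the (vector-valued) It\^o isometry,
$$
\mathrm{Cov}(\mathbf u(t))=\tilde\sigma^2\int_0^t e^{-Ms}\bigl(e^{-Ms}\bigr)^{\!\top}ds.
$$

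The key observation, and the only step that actually uses the special structure of $M$, is that $e^{-Ms}$ is the product of the scalar $e^{-as}$ and an orthogonal (rotation) matrix, so
$$
e^{-Ms}\bigl(e^{-Ms}\bigr)^{\!\top}=e^{-2as}I.
$$
Substituting this and evaluating the scalar integral gives
$$
\mathrm{Cov}(\mathbf u(t))=\tilde\sigma^2\Bigl(\int_0^t e^{-2as}ds\Bigr)I,
$$
which equals $\tilde\sigma^{2}\frac{1-e^{-2at}}{2a}I$ when $a\neq 0$ and $\tilde\sigma^{2}t\,I$ when $a=0$, as claimed. There is no real obstacle here; the only point that requires a touch of care is justifying the commutation $aI\cdot bJ=bJ\cdot aI$ and thereby the factorisation of the exponential, which is immediate since $I$ is central.
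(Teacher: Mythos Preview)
Your proof is correct. The paper does not actually supply a proof of this lemma; it simply cites \cite[Corollary~1]{vatiwutipong_alternative_2019}. Your argument is the standard self-contained computation: write the mild solution via variation of constants, exploit that $M=aI+bJ$ gives $e^{-Ms}=e^{-as}R(bs)$ with $R$ a rotation, and use the It\^o isometry together with $R(bs)R(bs)^{\top}=I$ to reduce the covariance to a scalar integral. This is precisely how one would prove the cited result directly, so there is no meaningful methodological difference---you have just filled in what the paper outsourced.
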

Lemma~\ref{Normal} implies that the projected stochastic convolution can be sampled by
$$\sigma P_N\sum_{k\in\Z}\sqrt{q_k}(X_k^r+iX_k^i)\phi_k,$$
where $X_k^r$, $X_k^i$ are independent random variables with a distribution $N(0,\frac{1-e^{-2\lambda_kt}}{2\lambda_k})$. The code accompanies the arXiv version of the paper.

\subsection*{Settings for numerical simulations}
In all numerical simulation results presented here we fix the parameters $R=2^{12}$, $\sigma=2^6$ and solve~\eqref{eq:CGLE} on $[0,T]$, with $T=2^{-12}$, and with initial data 
$u_0(x)=0$ for all $x\in[0,1]$. For parameters $\mu$ and $\nu$ we examine two different sets  that, in the deterministic case, lead to different asymptotic behaviour, see~\cite{GarcaMorales2012,doering_low-dimensional_1988}.  
We consider $\mu=\nu=1$, also called the \textit{stable setting} in the deterministic case as periodic travelling waves are stable to certain perturbations~\cite{doering_low-dimensional_1988}.
These parameter values satisfy  assumptions  in Theorem~\ref{conv}.
The other case has $\mu=-3$ and $\nu=3$, and as $|\nu|>\sqrt{3}$ the assumptions required in the proof for the bound of the fourth moment for solutions $U^m_{N, \dt}$ of the fully discrete problem~\eqref{eq:Full}, which are used in the proof of the convergence results, are not satisfied. 
This case we will call the \textit{defect turbulence setting} in accordance with \cite{GarcaMorales2012}. See \cite{GarcaMorales2012}, for more information about the behaviour of this setting. 

We consider two different choices of the noise, taking different $q_k$ in Definition~\ref{qk}, and in both cases we fix $\varepsilon=5\times10^{-4}$.  
The first choice is regular noise with $r=0$, so $q_k=|k|^{-1-10^{-3}}$ for $k\neq 0$ and $q_0=1$. 
The second choice is space-time white noise ($r=-\tfrac{1}{2}$)
where $q_k=1$ for all $k$.
Before discussing the numerical convergence results, we present some sample solutions computed using \ESM with $N=2^{13}$ spatial modes and $\dt=5\cdot2^{-23}$.
In Figure \ref{fig:stab1} we plot one sample realization for the defect turbulence setting ($\mu=-3, \nu=3$)  with regular noise ($r=0$).

\begin{figure}[ht]
    \centering
   \begin{subfigure}[t]{0.24\textwidth}
        \includegraphics[width=\textwidth]{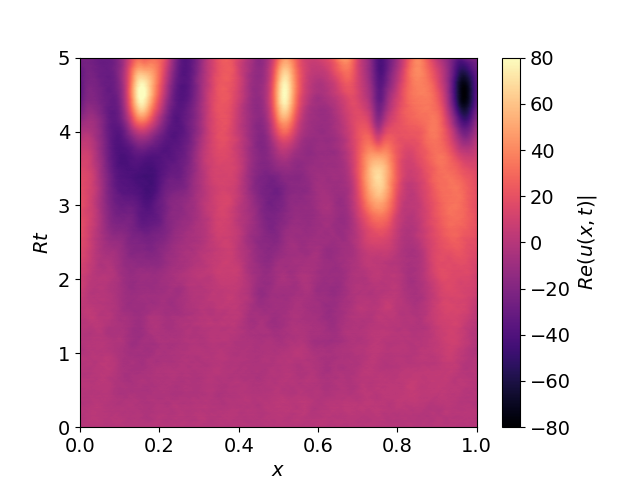} 
        \caption{}
    \end{subfigure}
    \begin{subfigure}[t]{0.24\textwidth}
        \includegraphics[width=\textwidth]{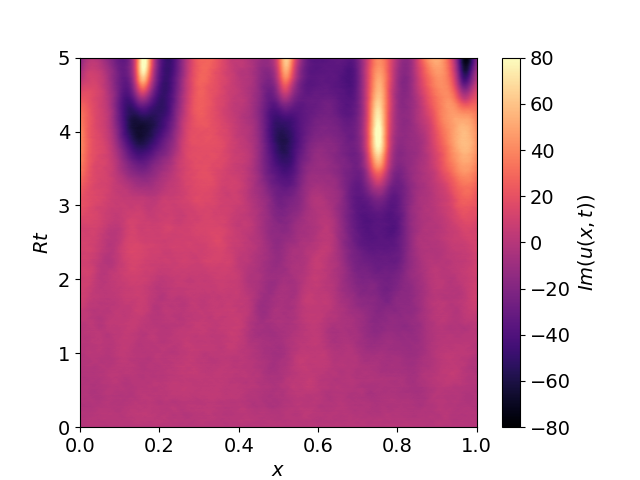} 
        \caption{}
    \end{subfigure}
    \begin{subfigure}[t]{0.24\textwidth}
        \includegraphics[width=\textwidth]{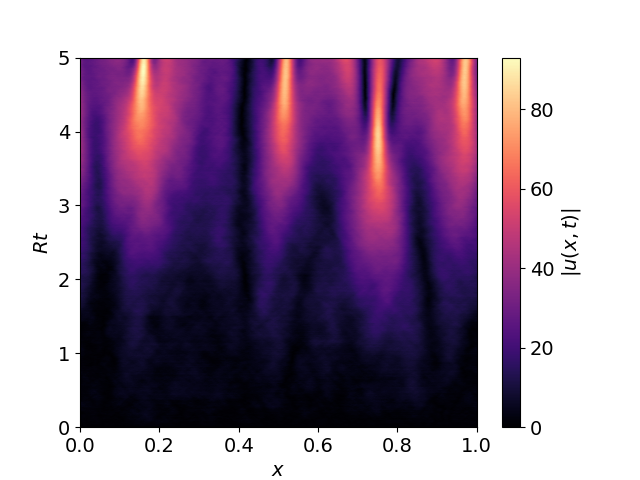} 
        \caption{}
    \end{subfigure}
\begin{subfigure}[t]{0.24\textwidth}
        \includegraphics[width=\textwidth]{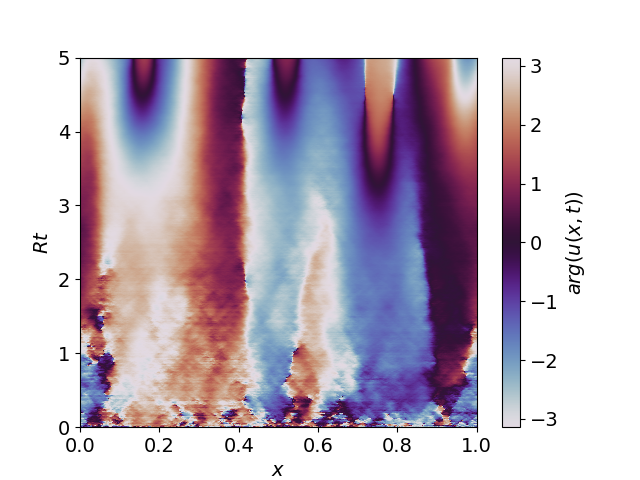} 
        \caption{}
    \end{subfigure}

     \caption{Defect turbulence setting with $\mu=-3$, $\nu=3$ and  regular noise ($r=0$). (a) $\Re (u(x,t))$, (b) $\Im (u(x,t))$, (c) $|u(x,t)|$, and (d) the phase of the solution.
     }
     \label{fig:stab1}
\end{figure}

\subsection{Space-time convergence}
To examine strong convergence in space and time numerically we estimate the root mean square error (RMSE) as 
$$\E\big(\|U^{M}_{N,\dt}-U^{M}_{2N,\frac 1 4\dt}\|^2\big)^{\frac{1}{2}}
\approx \sqrt{\frac{1}{J}\sum_{j=1}^{J}\|U^{M}_{N,\dt}(\omega_j)-U^{4M}_{2N,\frac{\dt}{4}}(\omega_j)\|^2}$$
with $J=50$. 
We ensure a parabolic scaling between the spatial and time discretizations as we refine the grids by taking $N^2\dt=1$.  

We compare our method to two other numerical methods. The first one is the exponential splitting method denoted (\ExpSM)
and the second is a tamed accelerated exponential Euler method denoted (\TAM).
The exponential splitting method (\ExpSM) is derived by splitting the SCGLE \eqref{eq:spec_CGLE} into the nonlinear, stochastic and linear term (split in this order)
\begin{equation}
\label{eq:ExpSM}
U^{m+1}_{N,\dt,\text{\ExpSM}}=e^{\dt A}(P_N\Phi_{\dt}(U^m_{N,\dt,\text{\ExpSM}}))+\sigma e^{\dt A}(W_N(t_{m+1})-W_N(t_m)).
\end{equation}
In comparison to our \ESM method the stochastic integral is not sampled exactly.

The tamed accelerated exponential Euler method (\TAM) was proposed in \cite{wang_efficient_2020} for the stochastic Allen-Cahn equation and we apply it here to the stochastic complex Ginzburg--Landau equation to get
\begin{equation*}
\label{eq:TAM}
    \begin{split}
       U^{m+1}_{N,\dt,\text{\TAM}}&=e^{\dt A}(U^{m}_{N,\dt,\text{\TAM}}) +\sigma\int_{m\dt}^{(m+1)\dt}\!\!\!\! e^{((m+1)\dt-s)A}dW(s)
      \\
      & +\left(1+\dt\|\Phi_{0}(U^m_{N,\dt,\text{\TAM}})\|\right)^{-1}\!\!\int_{m\dt}^{(m+1)\dt}\!\!\! \!\!\! e^{((m+1)\dt-s)A}ds\, \Phi_{0}(U^m_{N,\dt,\text{\TAM}}).
    \end{split}
\end{equation*}
For the Allen-Cahn equation, \cite{wang_efficient_2020} proved a strong convergence rate of order $\frac{1}{2}$ with additive space-time white noise and we are not aware of any convergence results for this method for the SCGLE~\eqref{eq:CGLE}.

We examine both stable setting, Figure~\ref{fig:stabc},  and defect turbulence setting, Figure~\ref{fig:chaosc}, with regular noise  $r=0$ in (A) and 
space-time white noise in (B). For simulations results presented in  Figure~\ref{fig:stabc}~(A),  the assumptions of Theorem~\ref{conv} are satisfied and  we observe \ESM converges in agreement with the theory.  In Figure~\ref{fig:chaosc}~(A)  we observe the same rate of convergence as in Figure~\ref{fig:stabc}~(A) even when the condition on $\nu$ is not satisfied, which was necessary for the proof of the moment bounds for solutions of the fully discrete problem \eqref{eq:Full}. In Figure~\ref{fig:stabc}~(B) and~\ref{fig:chaosc}~(B), \ESM shows a convergence rate, one would expect by extrapolating the result of Theorem~\ref{conv}. In all the convergence plots, \ExpSM has similar convergence rate as \ESM, but with larger constant. In Figures~\ref{fig:stabc}~(B) and~\ref{fig:chaosc}~(B), \TAM shows almost the same convergence behaviour as \ESM. Only in Figures~\ref{fig:stabc}~(A) and~\ref{fig:chaosc}~(A), \TAM seems to converge for larger $\dt$ values with order 1 but then approaches the same rate as \ESM for smaller values. This is likely due to the fact that  
it may be possible to prove 
time convergence of order $1$, similar to~\cite{Djurdjevac}, and that for larger $\dt$ the temporal error is dominating for \TAM, whereas for \ESM the spatial error is already dominating for these large $\dt$ values. 
\newpage
\begin{figure}[ht]
	\centering
	\begin{subfigure}[t]{0.48\textwidth}
		\includegraphics[width=\textwidth]{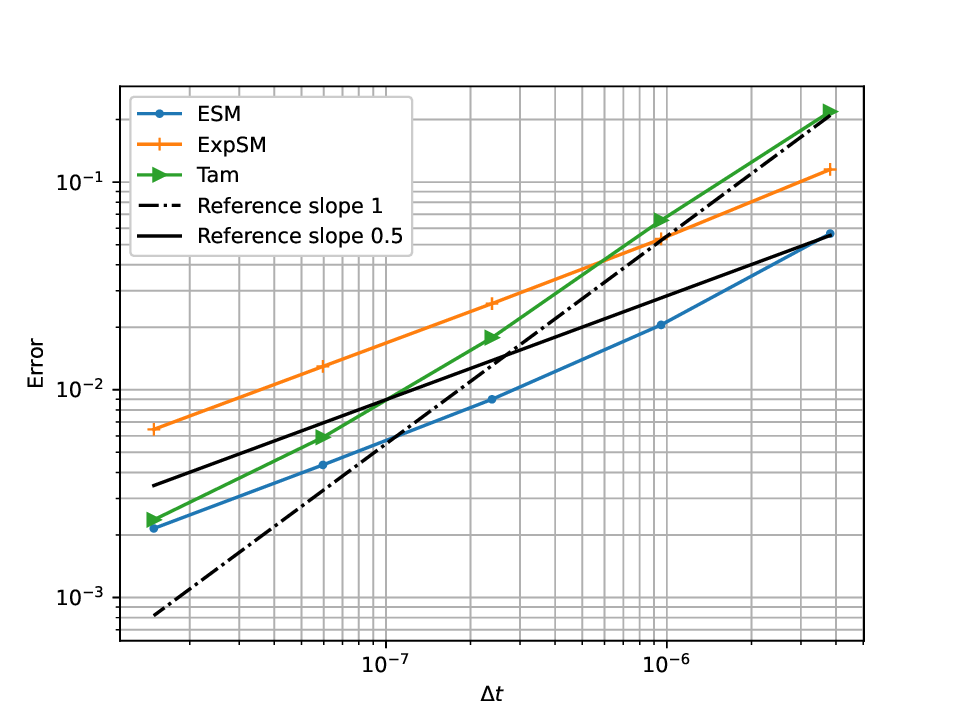}
		
		\caption{}
	\end{subfigure}
	\begin{subfigure}[t]{0.48\textwidth}
		\includegraphics[width=\textwidth]{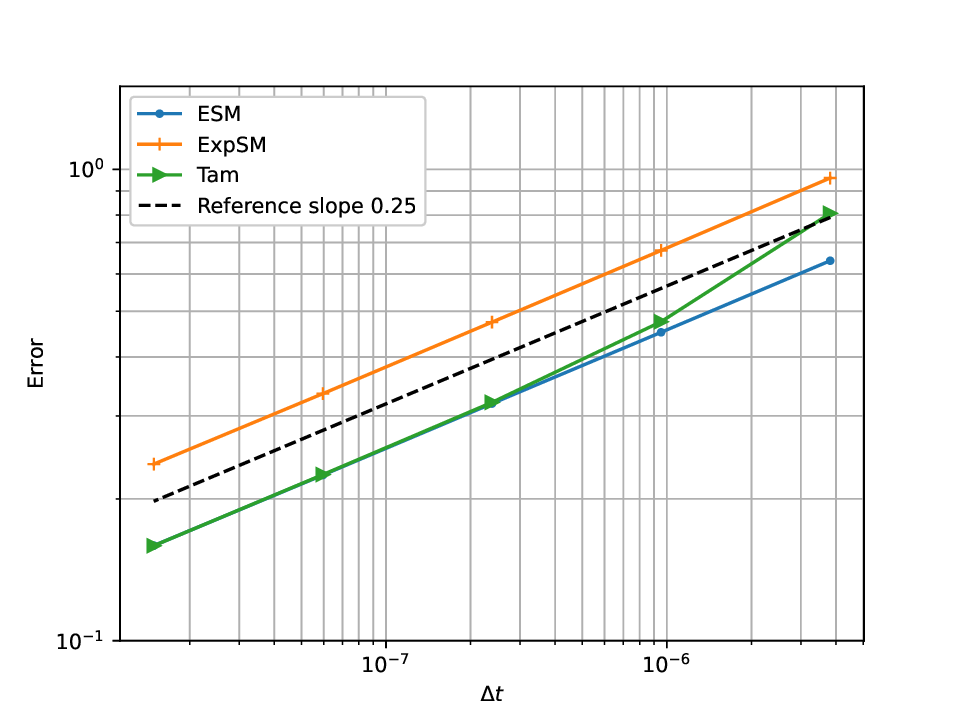} 
		\caption{}
	\end{subfigure}
	
	\caption{Convergence plot for the stable case $\mu=\nu=1$. (a) With regular noise ($r=0$)  and (b) space-time white noise ($r=-\tfrac{1}{2}$)}
	\label{fig:stabc}
\end{figure}

\begin{figure}[ht]
	\centering
	\begin{subfigure}[t]{0.48\textwidth}
		\includegraphics[width=\textwidth]{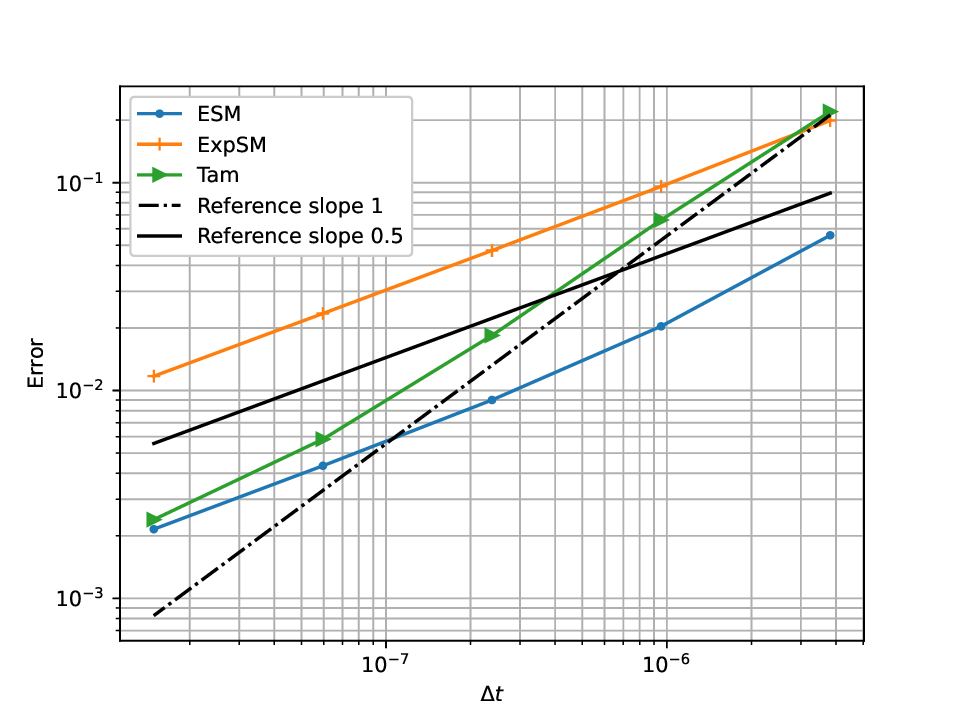}  
		\caption{}
	\end{subfigure}
	\begin{subfigure}[t]{0.48\textwidth}
		\includegraphics[width=\textwidth]{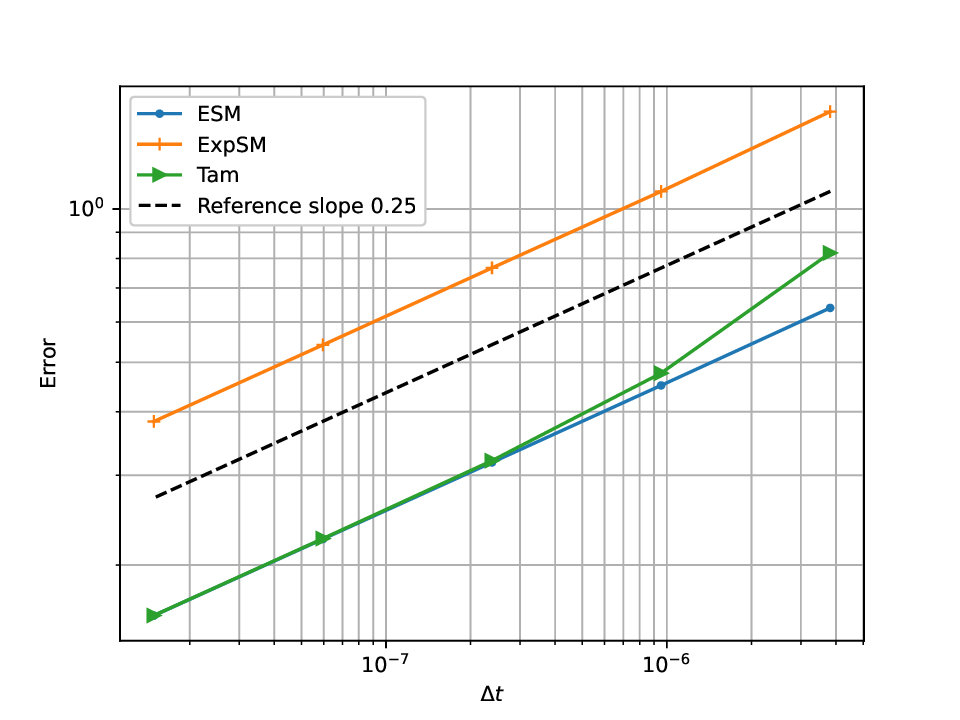} 
		
		\caption{}
	\end{subfigure}
	\caption{Convergence plot for the defect turbulence case $\mu=-3$ and $\nu=3$. (a) With regular noise ($r=0$) and (b) with space-time white noise ($r=-\tfrac{1}{2}$).}
	\label{fig:chaosc}
\end{figure}
\section*{Acknowledgments}
MJ was supported by the Swedish Research Council under the grants 2023-03930, eSSENCE; The e-Science Collaboration and the Crafoord foundation.
\printbibliography

\end{document}